\newcommand{\N}{\mathbb{N}}
\newcommand{\R}{\mathbb{R}}
\renewcommand{\l}{\lambda} 
 \renewcommand{\to}{\rightarrow}
\newcommand{\leqs}{\leqslant}
\newcommand{\geqs}{\geqslant}
 \newcommand{\vs}{\vspace{3mm}}
\newcommand{\imod}[1]{\allowbreak\mkern4mu({\operator@font mod}\,\,#1)}
\newtheorem{theorem}{Theorem}
\newtheorem*{conj*}{Conjecture}
\newtheorem{corol}[theorem]{Corollary}
\newtheorem{thm}{Theorem}[section]
\newtheorem{prop}[thm]{Proposition}
\newtheorem{lem}[thm]{Lemma}
\newtheorem{cor}[thm]{Corollary}
\theoremstyle{definition}
\newtheorem{rem}[thm]{Remark}
\newtheorem{ex}[thm]{Example}
\begin{document}

\author{Timothy C. Burness}
 \address{T.C. Burness, School of Mathematics, University of Bristol, Bristol BS8 1TW, UK}
 \email{t.burness@bristol.ac.uk}

 \author{Martin W. Liebeck}
\address{M.W. Liebeck, Department of Mathematics,
    Imperial College, London SW7 2BZ, UK}
\email{m.liebeck@imperial.ac.uk}

\author{Aner Shalev}
\address{A. Shalev, Institute of Mathematics, Hebrew University, Jerusalem 91904, Israel}
\email{shalev@math.huji.ac.il}

\title{The length and depth of algebraic groups}

\begin{abstract}
Let $G$ be a connected algebraic group. An unrefinable chain of $G$ is a chain of subgroups $G = G_0 > G_1 > \cdots > G_t = 1$, where each $G_i$ is a maximal connected subgroup of $G_{i-1}$. We introduce the notion of the length (respectively, depth) of $G$, defined as the maximal (respectively, minimal) length of such a chain. Working over an algebraically closed field, we calculate the length of a connected group $G$ in terms of the dimension of its unipotent radical
$R_u(G)$ and the dimension of a Borel subgroup $B$ of the reductive quotient $G/R_u(G)$. In particular, a simple algebraic group of rank $r$ has length $\dim B + r$, which gives a natural extension of a theorem of Solomon and Turull on finite quasisimple groups of Lie type. We then deduce that the length of any connected algebraic group $G$
exceeds $\frac{1}{2} \dim G$.

We also study the depth of simple algebraic groups. In characteristic zero, we show that the depth of such a group is at most $6$ (this bound is sharp). In the positive characteristic setting, we calculate the exact depth of each exceptional algebraic group and we prove that the depth of a classical group (over a fixed algebraically closed field of positive characteristic) tends to infinity with the rank of the group.

Finally we study the chain difference of an algebraic group, which is the difference between its length
and its depth. In particular we prove that, for any connected algebraic group $G$ with soluble radical $R(G)$, the dimension of $G/R(G)$ is
bounded above in terms of the chain difference of $G$.
\end{abstract}

\footnotetext{
The third author acknowledges the hospitality and support of Imperial College, London, while part of this work was carried out. He also acknowledges the support of ISF grant 1117/13 and the Vinik chair of mathematics which he holds.}

\subjclass[2010]{Primary 20E32, 20E15; Secondary 20G15, 20E28}
\date{\today}
\maketitle

\section{Introduction}\label{s:intro}

The \emph{length} of a finite group $G$, denoted by $l(G)$, is the maximum length of a chain of subgroups of $G$. This interesting invariant was the subject of several papers by Janko and Harada \cite{Harada,Jan,Jan2} in the 1960s, culminating in Harada's description of the finite simple groups of length at most $7$ in \cite{Harada}. In more recent years, the notion of length has arisen naturally in several different contexts. For example, Babai \cite{Babai} considered the length of the symmetric group $S_n$ in relation to the computational complexity of algorithms for finite permutation groups (a precise formula for $l(S_n)$ was later determined by Cameron, Solomon and Turull in \cite{CST}). Motivated by applications to fixed-point-free automorphisms of finite soluble groups, Seitz, Solomon and Turull studied the length of finite groups of Lie type in a series of papers in the early 1990s \cite{SST,ST,ST2}. Let us highlight one of their main results, \cite[Theorem A*]{ST2}, which states that if $G=G_r(p^k)$ is a finite quasisimple group of Lie type and $k$ is sufficiently large (with respect to the characteristic $p$), then
\begin{equation}\label{e:st}
l(G) = l(B)+r,
\end{equation}
where $B$ is a Borel subgroup of $G$ and $r$ is the twisted Lie rank of $G$.

The dual notion of the \emph{depth} of a finite group $G$, denoted by $\l(G)$, is the minimal length of a chain of subgroups
\[
G = G_0 > G_1 > \cdots > G_{t-1} > G_t = 1,
\]
where each $G_i$ is a maximal subgroup of $G_{i-1}$. This invariant was studied by Kohler \cite{K} for finite soluble groups and we refer the reader to more recent work of Shareshian and Woodroofe \cite{SW} for further results in the context of lattice theory. In \cite{BLS} we proved several results on the depth of finite simple groups and we studied the relationship between the length and depth of simple groups (see \cite{BLS2} for further results on the length and depth of finite groups). For instance, \cite[Theorem 1]{BLS} classifies the simple groups of depth $3$ (it is easy to see that $\l(G) \geqs 3$ for every non-abelian simple group $G$) and \cite[Theorem 2]{BLS} shows that alternating groups have bounded depth (more precisely, $\l(A_n) \leqs 23$ for all $n$, whereas $l(A_n)$ tends to infinity with $n$). Upper bounds on the depth of each simple group of Lie type over $\mathbb{F}_q$ are presented in \cite[Theorem 4]{BLS}; the bounds are given in terms of $k$, where $q=p^k$ with $p$ a prime.

In this paper, we extend the above notions of length and depth to connected algebraic groups over algebraically closed fields. Let $G$ be a connected algebraic group over an algebraically closed field of characteristic $p \geqs 0$. An \emph{unrefinable} chain of length $t$ of $G$ is a chain of subgroups
\[
G = G_0 > G_1 > \cdots > G_{t-1} > G_t = 1,
\]
where each $G_i$ is a maximal closed connected subgroup of $G_{i-1}$ (that is, $G_i$ is maximal among the proper connected subgroups of $G_{i-1}$). We define the \emph{length} of $G$, denoted by $l(G)$, to be the maximal length of an unrefinable chain.
Similarly, the \emph{depth} $\l(G)$ of $G$ is the minimal length of such a chain. Notice that we impose the condition that the subgroups in an unrefinable chain are connected, which seems to be the most natural (and interesting) definition in this setting.

In the statements of our main results, and for the remainder of the paper, we assume that the given algebraic group is connected and the underlying field is algebraically closed (unless stated otherwise). Also note that our results are independent of any choice of isogeny type. Our first result concerns the length of an algebraic group.

\begin{theorem}\label{lengthgen}
Let $G$ be an algebraic group and let $B$ be a Borel subgroup of the reductive group $\bar G = G/R_u(G)$. Then
\[
l(G) = \dim R_u(G) + \dim B + r,
\]
where $r$ is the semisimple rank of $\bar G$.
\end{theorem}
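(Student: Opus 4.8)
The natural approach is induction on $\dim G$, based on the identity $l(G) = 1 + \max\{\, l(M) \mid M \text{ a maximal closed connected subgroup of } G \,\}$ for $G \neq 1$ (with $l(1)=0$), together with a few standard reductions: $R_u(G)$ lies in every Borel subgroup of $G$; the Borel subgroups of $G$ are exactly the preimages of those of $\bar{G}=G/R_u(G)$, so $\dim B_G = \dim R_u(G) + \dim B$; and the closed subgroups of $G$ containing $R_u(G)$ correspond bijectively to those of $\bar{G}$, compatibly with dimension, connectedness, and with being maximal among connected subgroups.

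\emph{Lower bound.} I would build one unrefinable chain of the required length. Inside $\bar{G}$, starting from $\bar{G}$, I repeatedly replace the current term $\bar{P}$ (a parabolic of $\bar{G}$) by $R_u(\bar{P})$ extended by a maximal parabolic of the Levi factor of $\bar{P}$; each step is maximal among connected subgroups since a maximal parabolic is a maximal closed subgroup, and the semisimple rank of the Levi drops by exactly $1$, so after $r$ steps one reaches a Borel subgroup of $\bar{G}$. Pulling this chain of length $r$ back to $G$ produces an unrefinable chain from $G$ down to a Borel subgroup $B_G$ of $G$. As $B_G$ is connected solvable, the structure theory of unipotent groups and tori over the (perfect) ground field yields an unrefinable chain of $B_G$ of length $\dim B_G = \dim R_u(G) + \dim B$, each step of codimension $1$; concatenating gives $l(G) \geq \dim R_u(G) + \dim B + r$.

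\emph{Upper bound.} Given a maximal connected subgroup $M$ of $G$, I would bound $1 + l(M)$, using that the formula holds for $M$ by induction. If $R_u(G) \not\subseteq M$, then $M R_u(G) = G$ and $M/(M \cap R_u(G)) \cong \bar{G}$; checking that $R_u(M) = (M \cap R_u(G))^{\circ}$ and that $M/R_u(M)$ is isogenous to $\bar{G}$ gives $l(M) = \dim(M \cap R_u(G)) + \dim B + r$, and since $R_u(G)$ is connected and not contained in $M$ we get $\dim(M \cap R_u(G)) \leq \dim R_u(G) - 1$, so $1 + l(M) \leq \dim R_u(G) + \dim B + r$. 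If $R_u(G) \subseteq M$, then $\bar{M} := M/R_u(G)$ is a maximal connected subgroup of $\bar{G}$; using $R_u(M)/R_u(G) = R_u(\bar{M})$ and $M/R_u(M) = \bar{M}/R_u(\bar{M})$ one finds $l(M) = \dim R_u(G) + l(\bar{M})$, reducing us to the inequality $1 + l(\bar{M}) \leq \dim B + r$ in the reductive case. By the Borel--Tits theorem $\bar{M}$ is either a maximal parabolic of $\bar{G}$ --- where a direct computation with its Levi decomposition, using $N_{\bar{G}} = N_L + \dim R_u(\bar{M})$ and that the Levi $L$ has semisimple rank $r-1$, gives $1 + l(\bar{M}) = \dim B + r$ --- or $\bar{M}$ is reductive, where $l(\bar{M}) = \dim B_{\bar{M}} + r_{\bar{M}}$ by induction and one invokes the lemma below.

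The heart of the argument is the following characteristic-free lemma: \emph{if $H_1$ is a proper closed connected reductive subgroup of a connected reductive group $H$, then $\dim B_{H_1} + r_{H_1} \leq \dim B_H + r_H - 1$}, where $r_{\bullet}$ denotes the semisimple rank. Writing $\dim B_H = \ell_H + N_H$ with $\ell_H$ the rank and $N_H$ the number of positive roots (so $\dim H = \ell_H + 2N_H$), I would argue: a maximal torus $T_1$ of $H_1$ chosen inside a maximal torus $T$ of $H$ gives $\ell_{H_1} \leq \ell_H$; the inclusion $[H_1,H_1] \leq [H,H]$ gives $r_{H_1} \leq r_H$; and the $H_1$-orbit of the base point of $H/T$, whose stabiliser $H_1 \cap T$ has dimension $\ell_{H_1}$, gives $2 N_{H_1} = \dim H_1 - \ell_{H_1} \leq \dim(H/T) = 2 N_H$. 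If $\ell_{H_1} = \ell_H$ then $2 N_{H_1} = \dim H_1 - \ell_H < \dim H - \ell_H = 2 N_H$, so $N_{H_1} \leq N_H - 1$ and the claim holds; if $\ell_{H_1} \leq \ell_H - 1$, the three bounds combine directly. Feeding this lemma and the parabolic computation into the induction finishes the proof. I expect the main obstacles to be this lemma (the conceptual point that non-parabolic maximal subgroups give shorter chains, obtained here without the classification of maximal subgroups) and the bookkeeping needed to align the two cases of the induction --- in particular, verifying that the possible non-connectedness of $M \cap R_u(G)$ is harmless (pass to identity components; use isogeny-invariance of $\dim B$ and of the semisimple rank) and that the parabolic case yields exactly the claimed value rather than merely an inequality.
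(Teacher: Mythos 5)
Your proposal is correct, and its skeleton is the same as the paper's: induction on $\dim G$, the lower bound $l(G)\geq \dim R_u(G)+\dim B+r$ from an unrefinable chain descending through parabolic subgroups to a Borel subgroup and then through codimension-one steps to $1$, and the upper bound via the Borel--Tits dichotomy for a maximal connected subgroup, with the parabolic case giving exact equality through the Levi decomposition. The differences are worth recording. Organisationally, the paper first strips off $R_u(G)$ and the decomposition $\bar G=G_1\cdots G_tZ$ using the additivity $l(G)=l(N)+l(G/N)$ (its Lemma \ref{add}(ii)), so the maximal-subgroup analysis is only performed for $G$ simple; your two cases according to whether $R_u(G)\subseteq M$ in effect reprove that additivity in the instance needed, and your key lemma absorbs the central torus, so both organisations work. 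The substantive difference is in the reductive maximal subgroup case: the paper asserts without proof that $\dim B_M<\dim B-1$ and derives a contradiction with the lower bound, whereas you prove the characteristic-free inequality $\dim B_{H_1}+r_{H_1}\leq \dim B_H+r_H-1$ for any proper connected reductive subgroup $H_1$ of a connected reductive $H$, via the orbit computation on $H/T$ (correct as stated, provided $T$ is chosen to contain a maximal torus of $H_1$, so that $(H_1\cap T)^{\circ}$ is that maximal torus). This is exactly what the upper bound requires and fills in the step the paper labels ``easy to see''; note only that your inequality admits equality (e.g. $H_1=A_1<H=A_1T_1$), so it does not by itself recover the strict inequality the paper invokes in Remark \ref{bchain} to show that every maximum-length chain passes through a maximal parabolic --- but that assertion is not part of Theorem \ref{lengthgen}.
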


\begin{corol}\label{length}
Let $G$ be a simple algebraic group of rank $r$ and let $B$ be a Borel subgroup of $G$. Then
\[
l(G) = \dim B + r.
\]
Moreover, every unrefinable chain of $G$ of maximum length includes a maximal parabolic subgroup.
\end{corol}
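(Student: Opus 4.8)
The plan is to read off the identity $l(G) = \dim B + r$ directly from Theorem~\ref{lengthgen}, and then to prove the ``moreover'' clause in the stronger form that the \emph{first} proper term $G_1$ of any unrefinable chain of maximum length is already a maximal parabolic subgroup of $G$. The engine for this will be a dimension count showing that $l(M) \leqs l(G) - 2$ for every maximal connected subgroup $M$ of $G$ that is not parabolic.

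For the first part: as $G$ is simple it is semisimple, so $R_u(G) = 1$, $\bar G = G$, and the semisimple rank of $G$ equals $r$; hence Theorem~\ref{lengthgen} gives $l(G) = \dim B + r$. I will also record, for later use, that with $T$ a maximal torus of $G$ and $\Phi$ its root system one has $\dim B = \dim T + |\Phi^{+}| = \tfrac12(\dim G + r)$, so $l(G) = \tfrac12\dim G + \tfrac32 r$; and, more generally, that for \emph{any} connected reductive group $H$ with semisimple rank $s$ and $z := \dim Z(H)^{\circ}$ (so that $\mathrm{rank}\,H = s + z$), Theorem~\ref{lengthgen} applied to $H$ yields $l(H) = \dim B_H + s = \tfrac12(\dim H + s + z) + s = \tfrac12\dim H + \tfrac32 s + \tfrac12 z$.

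Next I would take an unrefinable chain $G = G_0 > G_1 > \cdots > G_t = 1$ with $t = l(G)$; since $G_1 > \cdots > G_t = 1$ is itself unrefinable, $l(G_1) \geqs t-1 = l(G)-1$, so it suffices to show that a maximal connected non-parabolic subgroup $M$ of $G$ satisfies $l(M) \leqs l(G) - 2$. Such an $M$ is reductive: if $R_u(M) \neq 1$ then by the Borel--Tits theorem $N_G(R_u(M))$ lies in a proper parabolic subgroup $P$ of $G$ (proper because $R_u(G) = 1$), and as $M \leqs N_G(R_u(M)) \leqs P$ the maximality of $M$ forces $M = P$, a contradiction. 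So, writing $s$ for the semisimple rank of $M$ and $z = \dim Z(M)^{\circ}$, the formulas above give
\[
l(G) - l(M) = \tfrac12(\dim G - \dim M) + \tfrac32(r-s) - \tfrac12 z ,
\]
and I would check this is $\geqs 2$ by cases. If $\mathrm{rank}\,M = s+z \leqs r-1$, then $r-s \geqs z+1$, so the last two terms contribute at least $\tfrac32$, and $l(G)-l(M) \geqs 2$ since $\dim G - \dim M \geqs 1$. If $M$ has maximal rank, then $M = \langle T, U_{\alpha} : \alpha \in \Psi\rangle$ for a closed subsystem $\Psi \subsetneq \Phi$ of full rank, so $\dim G - \dim M = |\Phi| - |\Psi|$ is a positive even integer: if $M$ is not semisimple then $z \geqs 1$, $r-s = z$, and $l(G)-l(M) = \tfrac12(\dim G-\dim M) + z \geqs 2$, while if $M$ is semisimple then $l(G) - l(M) = \tfrac12(|\Phi|-|\Psi|)$.

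Thus the whole argument comes down to the elementary claim that a proper, full-rank, closed subsystem $\Psi$ of an irreducible root system $\Phi$ satisfies $|\Phi \setminus \Psi| \geqs 4$ (equivalently: a simple algebraic group has no maximal-rank reductive subgroup of codimension $2$), and this is the one step that is not mere bookkeeping. I would argue by contradiction: if $\Phi \setminus \Psi = \{\pm\gamma\}$, choose $\alpha \in \Psi$ with $(\alpha,\gamma) > 0$ (possible since $\Psi$ spans the ambient space); then $s_{\alpha}(\gamma)$ is a root different from $\pm\gamma$, hence lies in $\Psi$, and inspecting the $\alpha$-string through $s_{\alpha}(\gamma)$ exhibits $\gamma$ as a sum of two roots of $\Psi$, contradicting the closedness of $\Psi$. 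Granting this, $l(G) - l(M) \geqs 2$ in every case, so $G_1$ must be parabolic; being maximal among proper connected subgroups, it is then a maximal parabolic subgroup of $G$, which proves the corollary. The main obstacle is precisely this root-system lemma: it upgrades the cheap inequality $l(G) - l(M) \geqs 1$ (immediate from $\dim M < \dim G$ and $s \leqs r$) to the strict bound $l(G) - l(M) \geqs 2$ that is needed to banish non-parabolic maximal subgroups from maximum-length chains. The remaining points --- independence of the isogeny type, and citing Borel--Tits in the exact form used --- I expect to be routine.
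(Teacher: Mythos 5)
Your overall strategy is the same as the paper's: the identity $l(G)=\dim B+r$ is read off from Theorem~\ref{lengthgen} (simple implies $R_u(G)=1$ and semisimple rank $=r$), and the ``moreover'' clause is obtained, exactly as in the proof of Theorem~\ref{lengthgen} and Remark~\ref{bchain}, by showing that a reductive maximal connected subgroup $M$ satisfies $l(M)\leqs l(G)-2$, so that no chain of maximum length can begin with such an $M$; being parabolic and maximal connected, $G_1$ is then a maximal parabolic. The paper disposes of the key inequality with the unproved assertion that $\dim B_M<\dim B-1$, whereas you make the count explicit. Your reduction to the reductive case via Borel--Tits, your Case 1 (${\rm rank}(M)\leqs r-1$) and the non-semisimple maximal-rank subcase are all correct.

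There is, however, a genuine gap in the remaining subcase ($M$ semisimple of maximal rank). You assert that such an $M$ equals $\langle T, U_\alpha : \alpha\in\Psi\rangle$ for a \emph{closed} subsystem $\Psi$, and your root-system lemma uses closedness in an essential way (you exhibit $\gamma$ as a sum of two roots of $\Psi$ and invoke closure to conclude $\gamma\in\Psi$). In special characteristics this premise fails: for $p=3$ the short-root subgroup $\tilde{A}_2$ is a maximal connected subgroup of $G_2$ of maximal rank, and the six short roots of $G_2$ do not form a closed subsystem (two short roots at angle $\pi/3$ sum to a long root); analogous subgroups generated by short root subgroups occur for $p=2$. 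The conclusion you actually need, namely $|\Phi\setminus\Psi|\geqs 4$ for every proper full-rank symmetric $\Psi$ arising from such an $M$, is still true, but it should be proved without closedness. For instance: if $\Phi\setminus\Psi=\{\pm\gamma\}$, then $N_M(T)$ permutes the root subgroups of $G$ contained in $M$, so every reflection $s_\alpha$ with $\alpha\in\Psi$ stabilises $\Psi$ and hence stabilises $\{\pm\gamma\}$; since $\gamma$ is not proportional to any $\alpha\in\Psi$, this forces $s_\alpha(\gamma)=\gamma$, i.e.\ $(\alpha,\gamma)=0$ for all $\alpha\in\Psi$, contradicting the fact that $\Psi$ spans the ambient space. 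With this repair your argument is complete and delivers exactly the statement the paper justifies in Remark~\ref{bchain}.
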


The last sentence of the corollary is justified in Remark \ref{bchain}.

By Lemma \ref{sol}, the solubility of $B$ implies that $l(B) = \dim B$, so Corollary \ref{length} is the algebraic group analogue of the aforementioned result of Solomon and Turull \cite[Theorem A*]{ST2} for finite quasisimple groups (see \eqref{e:st} above).

Next, we relate the length of arbitrary algebraic groups $G$ to their dimension.
We clearly have $l(G) \leqs \dim G$.

\begin{theorem}\label{t:len}
Let $G$ be an algebraic group.
\begin{itemize}\addtolength{\itemsep}{0.2\baselineskip}
\item[{\rm (i)}] $l(G) > \frac{1}{2}\dim G$.
\item[{\rm (ii)}] $l(G) = \dim G$ if and only if $G/R(G) \cong A_1^t$ for some $t \geqs 0$,
where $R(G)$ is the soluble radical of $G$.
\end{itemize}
\end{theorem}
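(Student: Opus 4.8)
The plan is to derive both parts directly from Theorem~\ref{lengthgen}. Put $d = \dim R_u(G)$, write $\bar{G} = G/R_u(G)$, and let $\ell$, $N$ and $r$ denote the rank of $\bar{G}$, the number of roots of $\bar{G}$ relative to a maximal torus, and the semisimple rank of $\bar{G}$, respectively. A Borel subgroup $B$ of $\bar{G}$ satisfies $\dim B = \ell + \frac{1}{2}N$, while $\dim \bar{G} = \ell + N$, so $\dim G = d + \ell + N$. Since Theorem~\ref{lengthgen} gives $l(G) = d + \dim B + r = d + \ell + \frac{1}{2}N + r$, I obtain the identity
\[
l(G) - \tfrac{1}{2}\dim G = \tfrac{1}{2}d + \tfrac{1}{2}\ell + r .
\]
After this, (i) and (ii) are essentially formal; the only inputs requiring (routine) care are an elementary inequality for root systems and the identification of $G/R(G)$ with a semisimple quotient of $\bar{G}$, and I expect the root-system inequality to be the mildly delicate step.

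For (i), the right-hand side of the displayed identity is a sum of nonnegative terms, so it equals $0$ only when $d = \ell = r = 0$; but $\ell = 0$ forces the maximal torus of $\bar{G}$ to be trivial, hence $N = 0$, hence $\dim G = d + \ell + N = 0$, i.e.\ $G = 1$. Therefore $l(G) > \frac{1}{2}\dim G$ whenever $G$ is nontrivial, as required.

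For (ii), since $\frac{1}{2}\dim G = \frac{1}{2}d + \frac{1}{2}\ell + \frac{1}{2}N$, the identity shows that $l(G) = \dim G$ if and only if $N = 2r$. I would then establish the root-system fact that the root system $\Phi$ of $\bar{G}$ satisfies $|\Phi| \geqs 2r$, with equality precisely when every irreducible component of $\Phi$ has type $A_1$: a rank-$1$ component is of type $A_1$ and has exactly $2$ roots, whereas an irreducible component $\Psi$ of rank $s \geqs 2$ contains the $2s$ roots $\pm\a_1,\dots,\pm\a_s$ (for a base $\a_1,\dots,\a_s$ of $\Psi$) together with the roots $\pm(\a_i + \a_j)$ for some pair $\a_i,\a_j$ of adjacent nodes in its connected Dynkin diagram, so that $|\Psi| \geqs 2s + 2 > 2s$; summing over components shows that $N = |\Phi| = 2r$ exactly when $\Phi = A_1^t$ with $t = r$. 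Finally, since $R_u(G) \leqs R(G)$ one checks that $R(G)/R_u(G)$ equals the radical $R(\bar{G}) = Z(\bar{G})^{\circ}$ of the reductive group $\bar{G}$, so $G/R(G) \cong \bar{G}/Z(\bar{G})^{\circ}$ is a semisimple group whose root system is $\Phi$; as our results do not depend on the isogeny type, $G/R(G) \cong A_1^t$ if and only if $\Phi = A_1^t$. Combining these equivalences gives (ii).
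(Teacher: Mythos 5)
Your proof is correct, and it takes a genuinely different (though closely related) route from the paper's. The paper decomposes via the soluble radical: it writes $G/R(G) = G_1\cdots G_t$ as a commuting product of simple factors, applies Corollary \ref{length} to each factor to get $l(G_i)=\dim B_i + r_i$, and combines this with $l(R(G))=\dim R(G)$ (Lemma \ref{sol}) and the additivity of length (Lemma \ref{add}(ii)); part (i) then follows from $\dim B_i + r_i > \frac{1}{2}\dim G_i$, and part (ii) from the observation that $l(G_i)=\dim G_i$ forces $G_i = A_1$. You instead apply Theorem \ref{lengthgen} once to the whole group, decomposing via $R_u(G)$, and condense everything into the identity $l(G)-\frac{1}{2}\dim G = \frac{1}{2}\dim R_u(G)+\frac{1}{2}\ell + r$. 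Both arguments ultimately rest on the same numerical facts ($\dim B = \ell + \frac{1}{2}N$, and $N\geqs 2r$ with equality precisely when every irreducible component of the root system is $A_1$), but your version turns the equality analysis in (ii) into a single global root count rather than a factor-by-factor reduction, and it makes explicit the step the paper dismisses as ``easily deduced'' from Corollary \ref{length}; your identification of $R(G)/R_u(G)$ with $Z(\bar G)^{\circ}$ and the appeal to isogeny-independence are both legitimate. The only (shared, harmless) caveat is that for $G=1$ the strict inequality in (i) degenerates, so both proofs implicitly take $G$ nontrivial, as you note.
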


The lower bound in part (i) of Theorem \ref{t:len} is essentially best possible. For example, if $G = C_r$ is a symplectic group of rank $r \geqs 1$, then Corollary \ref{length} implies that
\[
\frac{l(G)}{\dim G} = \frac{1}{2} + \frac{3}{4r+2} \to \frac{1}{2} \mbox{ as $r \to \infty$.}
\]

We now turn to the depth of simple algebraic groups. Our first result shows that simple algebraic groups in characteristic zero have bounded depth.

\begin{theorem}\label{alg0}
Let $G$ be a simple algebraic group in characteristic zero. Then
\[
\l(G) = \left\{\begin{array}{ll}
3 & \mbox{if $G = A_1$} \\
5 & \mbox{if $G = A_r$ {\rm ($r \geqs 3$, $r \ne 6$)}, $B_3$, $D_r$ or $E_6$} \\
6 & \mbox{if $G=A_6$} \\
4 & \mbox{in all other cases.}
\end{array}\right.
\]
\end{theorem}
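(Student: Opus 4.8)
The plan is to exploit the obvious recursion
\[
\lambda(G) = 1 + \min\{\lambda(M) : M\ \text{a maximal connected subgroup of}\ G\},
\]
together with the classification of maximal connected subgroups of simple algebraic groups in characteristic zero (Dynkin's theorems for the exceptional groups, and Dynkin and the theory of irreducible subgroups for the classical groups). I would start from two elementary observations. First, a connected group of dimension at most $3$ has depth at most $3$, the value $3$ being attained by $A_1$, by a $3$-dimensional torus, and by every $3$-dimensional connected solvable group. Secondly, a simple group $G$ of rank $\geqs 2$ has no $2$-dimensional maximal connected subgroup: its maximal connected subgroups are parabolic or reductive, a $2$-dimensional reductive group is a torus and tori lie in Borel subgroups, while $\dim B \geqs 4$; hence $\lambda(G)\geqs 4$. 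Pushing this further — a reductive group of depth $3$ is $A_1$ or a $3$-torus (the latter never maximal), and a maximal parabolic of a simple group of rank $\geqs 2$ has depth $\geqs 4$ since its Levi factor has a nontrivial central torus — one reaches the dichotomy: for $G$ simple of rank $\geqs 2$, $\lambda(G)=4$ if $G$ has a maximal connected subgroup isomorphic to $A_1$, and $\lambda(G)\geqs 5$ otherwise. For $A_1$ itself the only maximal connected subgroups are the Borel subgroups, of depth $2$, so $\lambda(A_1)=3$.

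The crux is then to decide which simple groups possess a maximal $A_1$. For classical groups this is governed by the embedding $A_1\into \mathrm{SL}_n$ afforded by $\mathrm{Sym}^{n-1}$ of the natural module: for $n\geqs 3$ this module carries a nondegenerate form, so its image lies in $\mathrm{Sp}_n$ or $\mathrm{SO}_n$ and is never maximal in $\mathrm{SL}_n$. Thus $A_r$ has no maximal $A_1$ for $r\geqs 3$, whereas $A_2=\mathrm{SL}_3$ contains the maximal subgroup $\mathrm{SO}_3=\mathrm{Sym}^2(A_1)$, giving $\lambda(A_2)=4$. The odd power $\mathrm{Sym}^{2r-1}$ is symplectic, hence maximal in $\mathrm{Sp}_{2r}$ (so $\lambda(C_r)=4$) but not contained in $\mathrm{SO}_{2r}$, so $D_r$ ($r\geqs 4$) has no maximal $A_1$ and $\lambda(D_r)\geqs 5$. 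The even power $\mathrm{Sym}^{2r}$ is orthogonal and maximal in $\mathrm{SO}_{2r+1}$ for $r=2$ and all $r\geqs 4$ (so $\lambda(B_r)=4$), but for $r=3$ it is the principal $A_1$ of the maximal subgroup $G_2<\mathrm{SO}_7$, hence not maximal, giving $\lambda(B_3)\geqs 5$. For the exceptional groups one reads off Dynkin's lists: $G_2$, $F_4$, $E_7$, $E_8$ each contain a maximal $A_1$ (so have depth $4$), while in $E_6$ the principal $A_1$ coincides with the one inside $F_4<E_6$ and no $A_1$ is maximal, so $\lambda(E_6)\geqs 5$.

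For the groups with $\lambda(G)\geqs 5$ I would then match the lower bound by exhibiting, in each case except $A_6$, a maximal connected subgroup of depth $4$: $\mathrm{Sp}_{r+1}<\mathrm{SL}_{r+1}$ for $r$ odd, $\mathrm{SO}_{r+1}<\mathrm{SL}_{r+1}$ for $r$ even with $r\ne 6$ (here $\mathrm{SO}_{r+1}=B_{r/2}$ with $r/2\ne 3$), $G_2<B_3$, $\mathrm{SO}_{2r-1}<\mathrm{SO}_{2r}$ for $r\geqs 5$ and the maximal subgroup $\mathrm{PGL}_3<\mathrm{SO}_8$ acting via its adjoint module for $r=4$, and $F_4<E_6$; in each case the recursion gives $\lambda(G)=1+4=5$. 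The exceptional case is $G=A_6=\mathrm{SL}_7$: since $7$ is prime and $\mathrm{Sym}^6$ is orthogonal, the only maximal connected subgroup of ``$\mathcal{S}$-type'' is $\mathrm{SO}_7=B_3$, of depth $5$, so one is forced to verify that the remaining maximal connected subgroups of $\mathrm{SL}_7$ — the parabolics and the maximal-rank subgroups $(\mathrm{SL}_a\times\mathrm{SL}_{7-a})\cdot T_1$ — all have depth $\geqs 5$, whence $\lambda(A_6)=1+5=6$. I expect the main obstacle to be precisely here, and more generally in establishing the depths of the ``generic'' maximal subgroups (parabolics and maximal-rank subgroups) encountered along the way: this calls for a downward induction on dimension or rank, combined with a separate computation of the depth of a parabolic subgroup $Q\rtimes L$ in terms of $\dim Q$ and the structure of $L$, together with some care over isogeny type and the low-rank coincidences among the classical groups.
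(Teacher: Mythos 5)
Your proposal is correct and follows essentially the same route as the paper: the recursion over Dynkin's classification of maximal connected subgroups, the dichotomy ``$\l(G)=4$ for rank $\geqs 2$ if and only if $G$ has a maximal $A_1$'' (derived from the fact that depth $3$ forces dimension $3$), and the same special cases $B_3$, $D_4$, $A_6$ and $E_6$ settled by explicit unrefinable chains plus a check that every maximal connected subgroup of $A_6$ has depth at least $5$. The only differences are cosmetic: you use $F_4<E_6$ where the paper uses $G_2<E_6$, and your list of maximal connected subgroups of $\mathrm{SL}_7$ harmlessly includes the Levi subgroups $(\mathrm{SL}_a\times\mathrm{SL}_{7-a})\cdot T_1$, which are in fact contained in parabolics and hence not maximal, so only $B_3$ and the three parabolics need checking, exactly as in the paper.
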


Our next result shows that the depth of simple groups in the positive characteristic setting is rather different. In particular, the depth can be arbitrarily large. To state this result, we need some additional notation. Given a prime $p$, define a sequence $e_n(p)$ ($n\in \N$) as follows: $e_1(p)=p$, and for $l>1$,
\[
e_{l+1}(p) = p^{e_l(p)^2}.
\]
Now define a function $\psi_p : \R\to \N$ by
\[
\psi_p(x) = {\rm  min}\left(l \, : \, e_l(p) \geqs x\right).
\]

\begin{theorem}\label{algp}
Let $G$ be a simple algebraic group in characteristic $p>0$ with rank $r$.
\begin{itemize}\addtolength{\itemsep}{0.2\baselineskip}
\item[{\rm (i)}] If $G$ is an exceptional group then $\l(G) \leqs 9$, with equality if and only if $G=E_8$ and $p=2$.
\item[{\rm (ii)}] If $G$ is a classical group, then $\l(G) \leqs 2(\log_2r)^2+12$.
\item[{\rm (iii)}] For any $G$, we have $\l(G) \geqs \psi_p(r)$. In particular, $\l(G)\to \infty$ as $r\to \infty$.
\end{itemize}
\end{theorem}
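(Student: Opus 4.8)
The three parts call for different methods: (i) and (ii) are upper bounds on the depth, to be established by exhibiting short unrefinable chains, whereas (iii) is a lower bound, to be established by showing that the (semisimple) rank cannot collapse too quickly along a chain; I treat them in the order (iii), (i), (ii). The heart of (iii) is the following estimate: if $H$ is a maximal closed connected subgroup of a simple algebraic group $X$ of rank $s$ in characteristic $p$, then $s \leqs p^{(\mathrm{rk}\,H)^2}$ once $\mathrm{rk}\,H$ is large, and $s$ is bounded above by a fixed function of $p$ otherwise. To prove this I appeal to the classification of maximal connected subgroups. If $X$ is exceptional then $s \leqs 8$ and there is nothing to prove, so assume $X = \mathrm{Cl}(V)$ is classical with $\dim V = n = \Theta(s)$. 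By the Liebeck--Seitz description, $H$ is either of geometric type — a parabolic, a reductive subgroup of maximal rank, or an imprimitive subgroup, in each of which $\mathrm{rk}\,H \geqs s-1$ so the rank barely drops; or the stabiliser of a tensor decomposition $V = V_1 \otimes \cdots \otimes V_k$, where $\mathrm{rk}\,H \geqs \sum_i(\lceil \tfrac12\dim V_i\rceil - 1)$ while $n = \prod_i \dim V_i$, and an elementary inequality turns this into $n \leqs p^{(\mathrm{rk}\,H)^2}$; or else $H$ is simple and $V = L(\lambda)$ is a tensor-indecomposable restricted irreducible $H$-module, whence $\dim V$ is at most the dimension $p^{N^+(H)}$ of the Steinberg module of $H$, and $N^+(H)$, the number of positive roots of $H$, satisfies $N^+(H) \leqs (\mathrm{rk}\,H)^2$ for $H$ classical. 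Granting the estimate, (iii) follows by induction on $\dim G$: writing $\l(G) = 1 + \l(M)$ for a suitable maximal connected subgroup $M$, the classification of maximal connected subgroups of semisimple groups (products, diagonal subgroups, and the simple case) together with the estimate gives $\psi_p(\mathrm{rk}\,M) \geqs \psi_p(\mathrm{rk}\,G) - 1$ — the point being that $\psi_p$ grows so slowly that a drop in rank by a bounded tower of $p$'s costs at most one — so that $\l(G) \geqs 1 + \psi_p(\mathrm{rk}\,M) \geqs \psi_p(\mathrm{rk}\,G)$; taking $G$ simple of rank $r$ gives $\l(G) \geqs \psi_p(r)$.

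For part (i) one computes $\l(G)$ from the recursion $\l(G) = 1 + \min_M \l(M)$ over maximal closed connected subgroups $M$, with base values $\l(T) = \dim T$ for tori, $\l(S) = \dim S$ for connected soluble $S$ (Lemma~\ref{sol}; the same formula holds for the depth of a connected soluble group), and $\l(A_1) = 3$ (the only maximal connected subgroups of $A_1$ are Borels, in any characteristic). Using the Liebeck--Seitz list of maximal connected subgroups of $G_2, F_4, E_6, E_7, E_8$ in each characteristic, one exhibits for every type and every relevant characteristic an explicit chain of length at most $9$ — typically $E_8 > D_8$ (or $A_1E_7$), then down through subsystem and small reductive subgroups to a product of copies of $A_1$, and finally through a Borel to the identity — giving the upper bound $\l(G) \leqs 9$ (and, with more care, the finer values one expects). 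For the sharpness of the $E_8$ bound one shows that for $p \neq 2$ there is a chain from $E_8$ of length $8$, whereas for $p = 2$ one must check that $\l(M) \geqs 8$ for every maximal connected subgroup $M$ of $E_8$; this last point is settled by bounding $\l(M)$ below for each of the finitely many $M$ via the recursion, the obstruction in characteristic $2$ being the absence or altered behaviour of certain small maximal subgroups (and the parity of the chain lengths through $D_8$, $A_1E_7$, and so on) that would otherwise save a step.

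For part (ii), given a classical group $G$ with natural module of dimension $n = \Theta(r)$, one builds an unrefinable chain of length $O((\log_2 r)^2)$ by divide-and-conquer on $n$. After a bounded number of initial steps one is inside $SL_n$, $Sp_n$ or $SO_n$, and then one repeatedly applies tensor-product maximal subgroups — $SL_a \otimes SL_b \leqs SL_{ab}$ ($a \neq b$), and $Sp_a \otimes SO_b$, $SO_a \otimes Sp_b$, $SL_2 \otimes Sp_m$, $SL_2 \otimes SO_m$ and the like — to reduce the rank. Since $n$ is arbitrary, at each stage one splits off the largest power of $2$ (working through the binary expansion $n = \sum_i 2^{k_i}$): each split $\mathrm{Cl}(V) \to \mathrm{Cl}(V_1) \perp \mathrm{Cl}(V_2)$ with $\dim V_1 = 2^{k_i}$ costs one step, and the resulting $2^{k_i}$-dimensional classical factor is collapsed in $O(k_i) = O(\log_2 r)$ steps via the iterated tensor subgroups $SO_{2^k} > SL_2 \otimes Sp_{2^{k-1}} > \cdots$ (or, to sidestep characteristic-$2$ subtleties, the characteristic-free analogue $SL_{2^k} > SL_{2^{k-1}} \otimes SL_2 > \cdots$), leaving $O(k_i)$ accumulated copies of $A_1$. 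As there are at most $\log_2 r$ summands and $\sum_i k_i = O((\log_2 r)^2)$, after $O((\log_2 r)^2)$ steps the chain reaches a product of $O((\log_2 r)^2)$ copies of $A_1$ together with a bounded-rank factor; collapsing these to Borels and then to the identity costs a further $O((\log_2 r)^2)$ steps, and a careful count of the constants yields the stated bound $2(\log_2 r)^2 + 12$.

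The main obstacle in (iii) is the rank-drop estimate itself: extracting, from the classification of maximal subgroups and the bound $p^{N^+(H)}$ on the dimension of a restricted irreducible $H$-module, an inequality that survives the passage from simple to reductive intermediate groups (handling central tori and diagonal subgroups) and that is tight enough to reproduce the tower $e_l(p)$. In (i) and (ii) the difficulty is essentially bookkeeping — an exhaustive walk through the maximal connected subgroups of the exceptional groups in all characteristics, together with the $E_8$, $p=2$ sharpness argument, for (i); and for (ii) the number-theoretic control of the binary decomposition of $n$ and of the accumulated bounded-rank factors, as well as the small-characteristic behaviour of the tensor-product maximal subgroups.
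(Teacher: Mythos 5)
Your overall strategy is sound, and for parts (i) and (iii) it coincides with the paper's. Part (iii) is proved there by exactly your rank-drop estimate: Lemma \ref{rkbd} shows a simple irreducible $M \leqs Cl(V)$ has ${\rm rank}(M) > \sqrt{\log_p r}$ because $\dim V \leqs p^{N} \leqs p^{({\rm rank}\,M)^2}$ via the Steinberg module, the subspace and tensor cases are handled by noting $M$ has a simple classical quotient $Cl(U)$ with $\dim U \geqs \sqrt{\dim V}$, and the induction closes using $\psi_p(x) = 1 + \psi_p(\sqrt{\log_p x})$. Part (i) is likewise proved by the recursion $\l(G) = 1 + \min_M \l(M)$ over the Liebeck--Seitz lists; be aware, though, that the entire content of (i) is the exhaustive verification you defer (Lemma \ref{l:lowrank} and Theorem \ref{algp:ex} occupy several pages computing exact depths of all low-rank and exceptional groups, the sharpness of $\l(E_8)=9$ at $p=2$ requiring $\l(M)\geqs 8$ for every maximal connected $M$, which in turn rests on $\l(A_4)=9$, $\l(D_8)\geqs 8$, etc.). Where you genuinely diverge is part (ii). The paper uses no tensor-product subgroups there: it reduces $A_r$, $B_r$, $D_r$ to the symplectic case in a bounded number of steps, writes $r$ in binary, splits off blocks via stabilisers of nondegenerate subspaces (maximal connected in one step), and collapses each block by alternating $C_{2^a} > C_{2^{a-1}}C_{2^{a-1}} > C_{2^{a-1}}$, the second step a diagonal subgroup --- a construction that is completely characteristic-free (Lemmas \ref{l:cr}--\ref{l:br}). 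Your tensor route ($SL_{2^{k-1}}\otimes SL_2 < SL_{2^k}$, $Sp_a \otimes SO_b < Sp_{ab}$, \dots) yields the same $O((\log_2 r)^2)$ count, but it forces you to resolve exactly the issues you only flag: the $Sp\otimes SO$ and $Sp\otimes Sp < SO$ steps are unavailable or non-maximal in characteristic $2$, and the proposed $SL$-workaround sits inside a $GL$ Levi of the symplectic or orthogonal group rather than inside the group itself; at the bottom of the chain the tensor subgroups degenerate into form stabilisers whose maximality is characteristic-dependent ($A_1A_1$ is maximal in $A_3$ only for $p \ne 2$); and for $SL_n$ a direct-sum split is a descent through a parabolic, not a single unrefinable step. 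None of this is fatal --- the overhead is $O(\log_2 r)$ --- but it makes the constant-chasing needed for the explicit bound $2(\log_2 r)^2 + 12$ considerably messier than the paper's diagonal-subgroup trick, and you have not carried it out.
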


Part (i) of Theorem \ref{algp} is an immediate corollary of Theorem \ref{algp:ex}, which gives the exact depth of each exceptional algebraic group. For parts (ii) and (iii), see Theorems \ref{clup} and Theorem \ref{t:lim}, respectively. We also give an example (Example \ref{slow}) to show that the lower bound $\psi_p(r)$ in (iii) is of roughly the correct order of magnitude.

By a well-known theorem of Iwasawa \cite{I}, the length and depth of a finite group $G$ are equal if and only if $G$ is supersoluble. This result does not extend directly to algebraic groups. However, it follows from our results on length and depth that the only simple algebraic group with $\l(G)=l(G)$ is $G = A_1$ (see Lemma \ref{cd0simple}). More generally, we prove the following result on arbitrary connected groups with this property, which can be viewed as a partial analogue of Iwasawa's theorem.

\begin{theorem}\label{cd0}
Let $G$ be an algebraic group satisfying $\l(G)=l(G)$ and let $R(G)$ be the radical of $G$. Then either $G$ is soluble, or $G/R(G) \cong A_1$.
\end{theorem}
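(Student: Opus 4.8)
The plan is to reduce the statement to an analysis of the reductive quotient $\bar G = G/R_u(G)$ and, within that, to the semisimple part, by showing that the hypothesis $\l(G) = l(G)$ forces the ``reductive core'' of $G$ to be extremely small. First I would record two easy monotonicity principles: if $N \normeq G$ is a connected normal subgroup, then $l(G) \geqs l(N) + l(G/N)$ and $\l(G) \leqs \l(N) + \l(G/N)$, obtained by concatenating chains through $N$ (the first needs the fact that a maximal connected subgroup of $N$ extends appropriately, or more simply that any unrefinable chain of $N$ together with the preimage of one of $G/N$ gives a chain of $G$; the second is immediate). Combined with the hypothesis $l(G) = \l(G)$ this pins the quantity $l(G) - \l(G) = 0$, the chain difference, and forces $l(N) - \l(N) = 0$ and $l(G/N) - \l(G/N) = 0$ for every connected normal $N$. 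Thus the property $\l = l$ is inherited by connected normal subgroups and quotients.

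Next I would apply this with $N = R_u(G)$: since $R_u(G)$ is unipotent, hence connected soluble, Lemma~\ref{sol}-type reasoning gives $l(R_u(G)) = \dim R_u(G)$, and one checks a unipotent group has depth equal to its dimension too (a connected soluble group, and in fact a unipotent group, is ``supersoluble-like'': every maximal connected subgroup has codimension $1$), so $R_u(G)$ causes no obstruction. The real content is the quotient $\bar G = G/R_u(G)$, which is reductive and again satisfies $\l(\bar G) = l(\bar G)$. Writing $\bar G$ as a central product of a torus and the semisimple group $\bar G' = [\bar G,\bar G]$, the torus part is fine (tori have $l = \l = $ rank), so it remains to show a semisimple group $H$ with $\l(H) = l(H)$ is either trivial or $A_1$. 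For this I would use Corollary~\ref{length}, which gives $l(H) = \dim B + r$ where $B$ is a Borel and $r$ the rank; if $H$ has simple factors $H_1,\dots,H_k$ then $l(H) = \sum l(H_i)$, while $\l(H) \geqs \sum \l(H_i)$ is false in general but $\l(H) \leqs \sum \l(H_i)$ holds, so equality $\l(H)=l(H)$ forces $k=1$ and $\l(H_i) = l(H_i)$ for that single factor — reducing to the simple case, where Lemma~\ref{cd0simple} (cited in the excerpt) says $H = A_1$.

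The step I expect to be the main obstacle is controlling the depth from below well enough to rule out everything except $A_1$, and more precisely handling the subtlety that depth is not additive over normal subgroups in the favourable direction: one only has $\l(G) \leqs \l(N) + \l(G/N)$, so to derive a contradiction from $\l(G) = l(G)$ one must combine this with the length inequality $l(G) \geqs l(N) + l(G/N)$ to get $l(N) + l(G/N) \leqs l(G) = \l(G) \leqs \l(N) + \l(G/N)$, and then invoke $\l(N) \leqs l(N)$, $\l(G/N) \leqs l(G/N)$ to force equalities throughout. The genuinely hard input is the fact, for simple algebraic groups, that $\l(G) = l(G)$ only for $A_1$: by Corollary~\ref{length} we have $l(G) = \dim B + r \geqs r+1$ with equality only for $A_1$ (since $\dim B \geqs 2r$ for $r\geqs 1$ actually $\dim B = r + |\Phi^+|$), whereas Theorems~\ref{alg0} and~\ref{algp} bound $\l(G)$ by an absolute constant in characteristic zero and for exceptional groups, and by $2(\log_2 r)^2 + 12$ for classical groups in positive characteristic — all of which grow far slower than $\dim B + r$, giving $\l(G) < l(G)$ for all simple $G$ of rank $\geqs 2$ and the stated conclusion. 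Assembling these pieces — monotonicity, the unipotent-radical reduction, the central-product splitting into torus and semisimple parts, and the simple-group dichotomy — yields that either $G$ is soluble (the semisimple core is trivial) or $G/R(G) \cong A_1$.
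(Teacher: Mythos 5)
Your overall strategy matches the paper's: additivity of $l$ and subadditivity of $\l$ over connected normal subgroups force $\l(N)=l(N)$ and $\l(G/N)=l(G/N)$ whenever $\l(G)=l(G)$, which pushes the hypothesis down to the simple factors of $G/R(G)$, where Lemma \ref{cd0simple} applies. However, there is a gap at the step where you pass from the semisimple quotient $H=H_1\cdots H_k$ to a single factor. Your inequality chain
\[
\l(H) \leqs \sum_i \l(H_i) \leqs \sum_i l(H_i) = l(H),
\]
together with $\l(H)=l(H)$, forces $\l(H_i)=l(H_i)$ for \emph{every} $i$ (hence $H_i\cong A_1$ for all $i$ by the simple case), but it does \emph{not} force $k=1$: nothing in these inequalities prevents $\l(A_1^k)$ from equalling $3k$. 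To rule out $k\geqs 2$ you need the additional observation that diagonal subgroups collapse the depth of a power: $A_1\times A_1 > {\rm diag}(A_1) > U_1T_1 > T_1 > 1$ is unrefinable, so $\l(A_1\times A_1)=4<6=l(A_1\times A_1)$, contradicting $\l(H)=l(H)$ when $k\geqs 2$. This is exactly the closing step of the paper's proof, and it is the one ingredient your argument asserts without justification.

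A more minor point: in your sketch of the simple case you claim the depth bounds of Theorems \ref{alg0} and \ref{algp} grow far slower than $l(G)=\dim B+r$ and hence give $\l(G)<l(G)$ for all $r\geqs 2$. That works for $r\geqs 5$ and for exceptional groups, but for classical groups of rank $2\leqs r\leqs 4$ in positive characteristic the bound $2(\log_2 r)^2+12$ exceeds $\dim B+r$ (e.g. $l(A_2)=7$ while the bound is $14$), so one must instead quote the exact depths in Table \ref{tab:cl1}, as the paper's Lemma \ref{cd0simple} does. Since you also cite that lemma directly this does not invalidate your reduction, but the asymptotic bounds alone would not close the simple case.
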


In fact it follows from our arguments that $\l(G) = l(G)$ if and only if $\l(G) = \dim G$.

This is proved in Section \ref{cor3}. Note that the converse is false: for example, if $G=UA_1$, a semidirect product where $U$ is a nontrivial irreducible $KA_1$-module, then $A_1$ is maximal in $G$, so $\l(G)\leqs 1+\l(A_1)=4$, while $l(G) = \dim U+l(A_1)>4$. On the other hand, if $G = U\times A_1$, or if $G$ is a nonsplit extension of the irreducible module $U$ by $A_1$, then $\l(G)=l(G)$.

More generally, we can consider the \emph{chain difference} of $G$, which is defined by
\[
{\rm cd}(G) = l(G) - \l(G).
\]
This invariant was studied for finite simple groups. See \cite{BWZ, HS, pet} for the study of finite
simple groups of chain difference one, and Corollary 9 in \cite{BLS}, where we bound the length
of a finite simple group in terms of its chain difference. For algebraic groups we prove a
stronger result, without assuming simplicity.

\begin{theorem}\label{t:cd}
Let $G$ be an algebraic group and set $\bar{G} = G/R(G)$. Then
\[
\dim \bar{G} \leqs  \left(2 + o(1)\right){\rm cd}(G),
\]
where $o(1) = o_{{\rm cd}(G)}(1)$. More precisely,
\[
\dim \bar{G} \leqs 2\,{\rm cd}(G) + 40\sqrt{400+2{\rm cd}(G)}+800.
\]
\end{theorem}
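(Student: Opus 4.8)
The plan is to reduce to the case where $G$ is semisimple, and then to exploit Theorem \ref{lengthgen} and Corollary \ref{length} together with the established lower bounds on depth of simple algebraic groups (Theorems \ref{alg0} and \ref{algp}). First I would show that replacing $G$ by $\bar G = G/R(G)$ does not decrease the chain difference: passing to a quotient by the soluble radical, any unrefinable chain in $G$ pushes down to one in $\bar G$, and conversely chains in $\bar G$ lift; the key point is that the soluble radical $R(G)$ itself has $l(R(G)) = \dim R(G) = \l(R(G))$-ish behaviour by Lemma \ref{sol}, so its contribution to both length and depth is the same, and hence cancels in the chain difference. Thus it suffices to prove $\dim G \leqs (2+o(1)){\rm cd}(G)$ when $G$ is semisimple, say $G = G_1 \cdots G_k$ a commuting product of simple factors with $\dim G = \sum \dim G_i$.

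For a semisimple group $G$, Corollary \ref{length} (applied factor-by-factor, since maximal connected subgroups of a product respect the factorisation up to the usual diagonal subtleties) gives $l(G) = \sum_i (\dim B_i + r_i)$ where $B_i$ is a Borel of $G_i$ of rank $r_i$; in particular $l(G) \geqs \tfrac12 \dim G + \tfrac12\sum r_i$ by Theorem \ref{t:len}(i), and more usefully $l(G)$ is comparable to $\dim G$. For the depth, I would build an \emph{economical} unrefinable chain: the crucial input is that each simple factor $G_i$ of rank $r_i$ admits an unrefinable chain of length at most $C + f(r_i)$ for a slowly growing function $f$ — in characteristic zero $f$ is bounded (Theorem \ref{alg0}), and in positive characteristic $f(r_i) = O((\log r_i)^2)$ for classical factors and $f$ bounded for exceptional factors (Theorem \ref{algp}(i),(ii)). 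Concatenating such chains across the $k$ factors, descending one factor at a time through a chain of subgroups $G_1\cdots G_j > G_1 \cdots G_{j-1} > \cdots$, yields $\l(G) \leqs \sum_i (C + f(r_i))$. Hence ${\rm cd}(G) = l(G) - \l(G) \geqs \sum_i\big(\dim B_i + r_i - C - f(r_i)\big)$.

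The heart of the estimate is then the purely structural inequality: for a simple algebraic group $H$ of rank $r$, the Borel dimension $\dim B = \tfrac12(\dim H + r)$ grows at least quadratically in $r$ (indeed $\dim H \geqs r^2$ roughly, with equality-type behaviour for type $A$), whereas $C + f(r)$ grows at most like $(\log r)^2$. So for all but finitely many ranks, $\dim B_i + r_i - C - f(r_i) \geqs \tfrac12 \dim B_i \geqs \tfrac14 \dim G_i$, say, and after absorbing the bounded-rank factors into the additive constant one gets $\dim G \leqs (2 + o(1)){\rm cd}(G)$. To pin down the explicit bound $2\,{\rm cd}(G) + 40\sqrt{400 + 2{\rm cd}(G)} + 800$, I would track constants carefully: the worst case is a product of many copies of $A_1$ (where $\dim G_i = 3$, $r_i = 1$, $\l(A_i)=3$, so cd contributes nothing per factor) — but such factors force $G/R(G) \cong A_1^t$, the equality case of Theorem \ref{t:len}(ii), and there $l(G) = \dim G$ while $\l$ is bounded below only by $3$; here one needs the finer analysis. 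The genuinely tight regime is a single factor of large rank, where $\dim G \approx 2\dim B$ and ${\rm cd}(G) \approx \dim B - O((\log r)^2)$, giving the leading constant $2$ and an error term governed by inverting $f(r) = O((\log r)^2)$ against $\dim B = \Theta(r^2)$, which produces the $\sqrt{{\rm cd}(G)}$-type correction.

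The main obstacle I anticipate is controlling the depth contribution of many small-rank (especially rank-one) factors simultaneously: a product $A_1^t$ has ${\rm cd} = \dim G - \l(G) = 3t - \l(A_1^t)$, and one must show $\l(A_1^t)$ cannot be too small relative to $t$ — this requires an independent lower bound on the depth of a product of simple factors (presumably something like $\l(G) \geqs c\log t$ or a careful maximal-subgroup analysis of $A_1^t$), rather than the trivial $\l(G) \geqs \l(A_1) = 3$. Reconciling this with the claimed linear bound $\dim\bar G \leqs 2\,{\rm cd}(G) + \cdots$ is where the real work lies; I expect it follows from a lemma bounding $\dim G - \l(G)$ below by a positive multiple of the number of simple factors, combined with the per-factor quadratic growth of $\dim B_i$ for the higher-rank factors.
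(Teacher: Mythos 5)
Your overall strategy (pass to $\bar G=G/R(G)$ via the superadditivity of ${\rm cd}$, then combine $l(G_i)=\dim B_i+r_i$ from Corollary \ref{length} with upper bounds on depth) matches the paper's, but there are two genuine gaps, and your diagnosis of where the difficulty lies points in the wrong direction. First, your depth estimate $\l(G)\leqs\sum_i(C+f(r_i))$, obtained by concatenating chains one simple factor at a time, is exactly the bound that fails when many factors are isomorphic: for $G=A_1^t$ it gives $\l(G)\leqs 3t=l(G)$, hence ${\rm cd}(G)\geqs 0$, which yields nothing. You sense trouble here but then assert that ``one must show $\l(A_1^t)$ cannot be too small relative to $t$'' --- this is backwards. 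A small depth makes ${\rm cd}$ large and only helps; what is needed is a much better \emph{upper} bound on $\l(S^k)$ than $k\,\l(S)$. The paper's key device (Proposition \ref{p:ss}) is the unrefinable chain of diagonal subgroups $S^k>S^{k-1}>\cdots>S$, giving $\l(S^k)\leqs k-1+\l(S)$ and hence ${\rm cd}(S^k)\geqs k(l(S)-1)-\l(S)+1$, which grows linearly in $k$ with positive slope since $l(S)\geqs 3$. For $A_1^t$ this gives ${\rm cd}\geqs 2t-2$, and $3t\leqs 2(2t-2)+{\rm const}$ follows. Without this idea your argument cannot handle $\bar G=A_1^t$ (or any $S^k$ with $k$ large), which is not a degenerate corner case but the main structural obstruction.

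Second, your account of the error term is not right. For a single simple factor the paper gets a clean additive constant: $\dim G=2\dim B-r\leqs 2\,{\rm cd}(G)+\lfloor 24-3r+4(\log_2 r)^2\rfloor\leqs 2\,{\rm cd}(G)+40$, because the $-3r$ term dominates the $(\log_2 r)^2$ term; there is no $\sqrt{{\rm cd}}$ correction coming from a large-rank factor. The $40\sqrt{400+2{\rm cd}(G)}$ term arises instead from summing the per-type additive constants over the $n$ pairwise non-isomorphic simple groups $S_1,\dots,S_n$ occurring in $\bar G=\prod_i S_i^{k_i}$: one needs ${\rm cd}(G)\geqs\frac12\dim\bar G-20n$ together with the elementary estimate $\sum_i\dim S_i\geqs n^2$ (Lemma \ref{summ}), which forces $n\leqs 20+\sqrt{400+2{\rm cd}(G)}$. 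Neither the grouping by isomorphism type nor the bound on $n$ appears in your outline, so the explicit form of the theorem is out of reach as written.
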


This result will be proved in Section \ref{s:cd}.

We also consider the {\it chain ratio} ${\rm cr}(G) = \l(G)/l(G)$ of an algebraic group $G$, and show in Section \ref{chrat} that in contrast to the chain difference, the dimension of $G/R(G)$ is not in general bounded in terms of ${\rm cr}(G)$.

\section{Preliminaries}\label{s:prel}

As stated in Section \ref{s:intro}, for the remainder of the paper we assume $G$ is a connected algebraic group over an algebraically closed field (unless stated otherwise). We start with the following elementary observation.

\begin{lem}\label{add}
Let $G$ be an algebraic group and let $N$ be a connected normal subgroup.
\begin{itemize}\addtolength{\itemsep}{0.2\baselineskip}
\item[{\rm (i)}] $\l(G) \leqs l(G) \leqs \dim G$.
\item[{\rm (ii)}] $l(G) = l(N)+l(G/N)$.
\item[{\rm (iii)}] $\l(G/N) \leqs \l(G) \leqs \l(N) + \l(G/N)$.
\end{itemize}
\end{lem}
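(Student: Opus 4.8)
The three statements are essentially bookkeeping about chains of connected subgroups, and I expect the proof to be short once the right observations are assembled. For part (i), the inequality $\l(G)\leqs l(G)$ is immediate from the definitions, since any unrefinable chain realising the depth is in particular an unrefinable chain. The bound $l(G)\leqs\dim G$ follows because a proper connected subgroup $H<K$ of connected algebraic groups satisfies $\dim H<\dim K$: a closed connected subgroup of the same dimension as the ambient connected group must equal it. Hence each step of an unrefinable chain drops the dimension by at least $1$, so the chain has length at most $\dim G$.

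For part (ii), the plan is to use the correspondence theorem for the quotient map $\pi\colon G\to G/N$. Given an unrefinable chain $N=N_0>N_1>\cdots>N_s=1$ of $N$ and an unrefinable chain $G/N=\bar G_0>\bar G_1>\cdots>\bar G_u=1$ of $G/N$, I would concatenate the chain $G=\pi^{-1}(\bar G_0)>\pi^{-1}(\bar G_1)>\cdots>\pi^{-1}(\bar G_u)=N$ with the chain $N>N_1>\cdots>1$; the preimages are connected (as $N$ is connected and each $\bar G_i$ is connected) and maximality is preserved under the correspondence, so this gives an unrefinable chain of $G$ of length $u+s$, whence $l(G)\geqs l(N)+l(G/N)$. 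Conversely, given any unrefinable chain $G=G_0>G_1>\cdots>G_t=1$ of $G$, I would intersect with $N$ and push down to $G/N$: the subgroups $G_i\cap N$ and $G_iN/N$ (taking connected components / identity components where necessary) interleave to produce chains in $N$ and $G/N$ whose lengths sum to at least $t$. The one point requiring care is that $G_i\cap N$ and $G_iN$ need not be connected even when $G_i$ and $N$ are, so I would work with identity components and check that passing from $G_i$ to $G_{i+1}$ changes at most one of the two "coordinates" $\dim(G_i^\circ\cap N)$ and $\dim(G_iN/N)^\circ$, and changes it by exactly $1$ when it changes; since these dimensions go from $(\dim N,\dim G/N)$ down to $(0,0)$, this forces $t\leqs l(N)+l(G/N)$. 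Combining the two inequalities gives equality. (Since $N$ is normal, $l(N)$ here is unambiguous; one should note $l$ is conjugation-invariant, which is clear.)

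For part (iii), the upper bound $\l(G)\leqs\l(N)+\l(G/N)$ is proved by the same concatenation as in (ii) but starting from chains realising the \emph{depths} of $N$ and $G/N$: pulling back a minimal unrefinable chain of $G/N$ to the interval $[G,N]$ and appending a minimal unrefinable chain of $N$ produces an unrefinable chain of $G$ of length $\l(N)+\l(G/N)$, so the minimal such length is no larger. For the lower bound $\l(G/N)\leqs\l(G)$, I would take an unrefinable chain $G=G_0>\cdots>G_t=1$ of minimal length $t=\l(G)$, form the images $\pi(G_i)^\circ$ in $G/N$, and delete repetitions; this yields an unrefinable chain of $G/N$ — the key point being that if $\pi(G_i)=\pi(G_{i+1})$ is skipped, consecutive distinct images remain maximal in one another because $G_{i+1}$ is maximal in $G_i$ and the fibres of $\pi$ restricted to the chain collapse cleanly — of length at most $t$, giving $\l(G/N)\leqs t=\l(G)$.

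The main obstacle throughout is the standard connectedness bookkeeping: intersections, products, and images of connected subgroups need identity components taken, and one must verify that maximality of connected subgroups is inherited correctly under $\pi$ and $\pi^{-1}$ and under intersection with the normal subgroup $N$. Once the lemma that "$H<K$ connected implies $\dim H<\dim K$" and the connected-subgroup correspondence for $\pi$ are in hand, each assertion reduces to tracking a pair of non-negative integers that decreases to $(0,0)$, and the inequalities fall out.
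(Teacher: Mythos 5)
Your overall strategy coincides with the paper's: the authors declare (i) and (iii) obvious and prove (ii) ``just as [CST, Lemma 2.1]'', i.e.\ by concatenating chains for the lower bound and by intersecting with $N$ and projecting to $G/N$ for the upper bound, exactly as you propose; your attention to identity components and to the connected-subgroup correspondence for $\pi$ is the right bookkeeping, and your arguments for (i) and (iii) are fine.

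One claim in your part (ii) is false as stated and, taken literally, would derail the count: it is not true that passing from $G_i$ to $G_{i+1}$ changes one of the coordinates $\dim(G_i\cap N)^\circ$, $\dim\pi(G_i)$ ``by exactly $1$''. For instance, with $G=A_1\times A_1$, $N=A_1\times 1$ and $G_1$ a diagonally embedded $A_1$ (which is maximal connected), the first coordinate drops from $3$ to $0$ in a single unrefinable step. Moreover, if each step really did decrement the total dimension by exactly $1$, you would conclude $t=\dim G$ rather than $t\leq l(N)+l(G/N)$. The correct counting statement --- which your earlier sentence about the two chains ``interleaving'' already contains --- is that at each step at least one of the two derived \emph{subgroup} chains $(G_i\cap N)^\circ$ and $\pi(G_i)$ takes a strict step (otherwise $\dim G_{i+1}=\dim(G_{i+1}\cap N)+\dim\pi(G_{i+1})=\dim G_i$, which is impossible). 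Hence $t$ is at most the sum of the numbers of strict inclusions occurring in the two derived chains, and each of those numbers is bounded by $l(N)$, respectively $l(G/N)$, because any strictly decreasing chain of connected closed subgroups refines to an unrefinable one (a proper connected subgroup of maximal dimension containing a given connected subgroup is necessarily maximal, since proper containment of connected groups forces a strict drop in dimension). With that repair the argument is complete and matches the intended proof.
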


\begin{proof}
Parts (i) and (iii) are obvious, and part (ii) is proved just as \cite[Lemma 2.1]{CST}.
\end{proof}

Recall that if $U$ is a connected unipotent algebraic group, then the \emph{Frattini subgroup} $\Phi(U)$ of $U$ is the intersection of the closed  subgroups of $U$ of codimension 1 (see \cite{fau}).

\begin{lem}\label{sol}
If $G$ is a soluble algebraic group, then $\l(G)= l(G) = \dim G$.
\end{lem}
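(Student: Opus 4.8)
The plan is to prove the two non-trivial assertions together: that every unrefinable chain in a soluble connected algebraic group $G$ has length exactly $\dim G$ (which simultaneously gives $\l(G) = l(G) = \dim G$, since by Lemma \ref{add}(i) we already know $\l(G) \leqs l(G) \leqs \dim G$). I would argue by induction on $\dim G$, the base case $\dim G = 0$ being trivial. For the inductive step it suffices to show that if $G$ is soluble and non-trivial, then every maximal closed connected subgroup $M$ of $G$ satisfies $\dim M = \dim G - 1$; the result then follows by applying the inductive hypothesis to $M$.

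So the heart of the matter is the following claim: a maximal connected subgroup of a non-trivial soluble connected group has codimension $1$. To see this I would split into the unipotent and non-unipotent cases. First suppose $G$ is unipotent. Then $G/\Phi(G)$ is a nontrivial connected abelian unipotent group, hence (over an algebraically closed field) isomorphic to a vector group $\mathbb{G}_a^d$ with $d \geqs 1$; any maximal connected subgroup $M$ of $G$ must contain $\Phi(G)$ (since a connected subgroup of codimension $>1$ is not maximal, and $\Phi(G)$ is the intersection of the codimension-$1$ subgroups), so $M/\Phi(G)$ is a maximal connected subgroup of $\mathbb{G}_a^d$, i.e. a hyperplane subgroup, of codimension $1$. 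Hence $\dim M = \dim G - 1$. Now suppose $G$ is soluble but not unipotent, so a maximal torus $T$ of $G$ is non-trivial and $G = U \rtimes T$ with $U = R_u(G)$ by the structure theory of connected soluble groups. If the maximal connected subgroup $M$ contains a maximal torus, then (after conjugating) $M \geqs T$ and $M = (M \cap U) \rtimes T$; here $M \cap U$ is $T$-invariant and maximal among proper connected $T$-invariant subgroups of $U$, and a standard argument on $T$-module structure shows such a subgroup has codimension $1$ in $U$, so again $\dim M = \dim G - 1$. If instead $M$ contains no maximal torus, then $M \leqs U$ up to conjugacy; but then $M U / U$ would be trivial whereas $M$ could be enlarged by a one-dimensional torus, contradicting maximality unless $U = 1$ — more carefully, if $M \subsetneq U$ then $M$ is properly contained in the connected group generated by $M$ and a maximal torus of $G$, contradicting maximality, while if $M = U$ then $\dim M = \dim G - \dim T = \dim G - 1$ only when $\dim T = 1$, and when $\dim T > 1$ the subgroup $U$ is not maximal since $U \rtimes S$ for a one-dimensional subtorus $S$ lies strictly between $U$ and $G$. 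Either way a maximal connected subgroup has codimension $1$.

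The step I expect to require the most care is the claim that a $T$-invariant connected subgroup of $U$ maximal among proper such subgroups has codimension $1$: this uses that $U$ admits a $T$-stable filtration by connected normal subgroups with successive quotients one-dimensional $T$-modules (or vector groups on which $T$ acts), so that a maximal proper $T$-invariant connected subgroup is obtained by deleting exactly one such quotient. One must also be slightly careful in characteristic $p$ that "connected subgroup of codimension $1$" behaves well — but this is exactly what the Frattini subgroup formalism of \cite{fau} is set up to handle, and it is the reason the lemma is stated for connected (rather than arbitrary closed) subgroups. Once the codimension-$1$ claim is in place, the induction closes immediately and gives $\l(G) = l(G) = \dim G$.
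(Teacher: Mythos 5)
Your overall strategy --- reduce to showing that every maximal connected subgroup $M$ of a nontrivial connected soluble group $G$ has codimension $1$, split according to the position of $M$ relative to $U = R_u(G)$ and a maximal torus $T$, and use the Frattini subgroup of $U$ together with linearisation of the $T$-action --- is exactly the paper's. However, your case division in the non-unipotent case rests on a false implication, and as a result one family of maximal connected subgroups is never treated. You claim that if $M$ contains no maximal torus of $G$ then $M \leqs U$ up to conjugacy. This fails whenever $\dim T \geqs 2$: the subgroup $M = U \rtimes S$, with $S$ a subtorus of $T$ of codimension $1$, contains no maximal torus of $G$ (any torus in $M$ meets $U$ trivially, hence embeds in $M/U \cong S$ and has dimension at most $\dim T - 1$), and it is certainly not contained in $U$. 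These $M$ are genuine maximal connected subgroups, they fall into your second case, and your analysis there --- which only considers the possibilities $M \subsetneq U$ and $M = U$ --- says nothing about them. Already for $G$ a torus of rank $2$ your second case collapses, since there $U = 1$ and a maximal connected subgroup is a $1$-dimensional subtorus.

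The correct dichotomy, which is the one the paper uses, is on whether $U \leqs M$. If $U \leqs M$, then $M = US$ with $S$ a maximal connected subgroup of the torus $G/U \cong T$, so $\dim S = \dim T - 1$ and $M$ has codimension $1$. If $U \not\leqs M$, then $MU$ is a connected subgroup properly containing $M$, so $MU = G$ by maximality; hence $M/(M\cap U) \cong G/U \cong T$, so $M$ does contain a maximal torus of $G$ and your first case applies. With this repair the argument closes. Two smaller points: your justification that $\Phi(G) \leqs M$ in the unipotent case (``a connected subgroup of codimension $>1$ is not maximal'') assumes the very statement being proved, so you should instead appeal to the non-generator property of $\Phi$ or to the nilpotency of $U$; and in the subcase $M \subsetneq U$ the clean contradiction is simply that $U$ itself is a proper connected subgroup lying strictly between $M$ and $G$.
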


\begin{proof}
It is sufficient to show that any connected maximal subgroup $M$ of $G$ has codimension $1$. Write $G = UT$, where $U = R_u(G)$ and $T$ is a maximal torus. If $U\leqs M$ then $M=US$, where $S$ is a connected maximal subgroup of $T$, and the result follows since $\dim S = \dim T - 1$. Now assume $U \not \leqs M$, so $M = (M \cap U)T$ and $M \cap U$ is a maximal  $T$-invariant subgroup of $U$. Now $\Phi(U)\leqs M$, so by factoring out $\Phi(U)$ we can assume that $\Phi(U)=1$. Then $U \cong K^n$, an $n$-dimensional vector space over the underlying algebraically closed field $K$ (see \cite[Proposition 1]{fau}). Moreover, $T$ acts linearly on $U$, and since $T$ is diagonalisable on $V$, a maximal $T$-invariant subspace has codimension 1 (this is proved in much greater generality in \cite[Theorem B]{mc}). Hence $M$ has codimension 1 in $G$ in this case also.
\end{proof}

\begin{lem}\label{depth23}
Let $G$ be an algebraic group and let $m \in \{1,2,3\}$. Then $\l(G) = m$ if and only if $\dim G = m$.
\end{lem}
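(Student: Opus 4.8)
The plan is to combine Lemma~\ref{add}(i) with two ingredients. The routine one is that every connected algebraic group of positive dimension contains a closed subgroup isomorphic to $\mathbb{G}_a$ or $\mathbb{G}_m$, and that a proper closed connected subgroup has strictly smaller dimension; in particular, each step of an unrefinable chain drops the dimension by at least $1$, and if the trivial subgroup is maximal connected in a group then that group has dimension at most $1$. The substantive one is the following dimension bound, which I will prove below: \emph{if a connected algebraic group $K$ has a maximal connected subgroup $H$ with $\dim H \leqs 2$, then $\dim K \leqs \dim H + 1$.}

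Granting the bound, the lemma follows quickly. Let $G = G_0 > G_1 > \cdots > G_t = 1$ be an unrefinable chain with $t = \l(G)$, and suppose $t \geqs 1$. Applying the bound to the consecutive pairs $(G_{t-1},1)$, $(G_{t-2},G_{t-1})$, $(G_{t-3},G_{t-2})$, together with the lower bounds $\dim G_{t-j} \geqs j$ coming from strict decrease of dimension, forces $\dim G_{t-j} = j$ for $1 \leqs j \leqs \min(t,3)$ (the argument cannot be pushed past $j=3$, precisely because the dimension bound requires $\dim H \leqs 2$). Hence if $\l(G) = t \leqs 3$ then $\dim G = \dim G_{0} = t = \l(G)$. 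Since also $\l(G) \leqs \dim G$ by Lemma~\ref{add}(i), this gives both directions for $m \in \{1,2,3\}$: if $\l(G)=m$ then $\dim G = m$, and conversely if $\dim G = m$ then $\l(G) \leqs m \leqs 3$ (and $\l(G) \geqs 1$ as $G \ne 1$), whence $\l(G) = \dim G = m$.

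It remains to prove the dimension bound. If $K$ is soluble this is immediate from Lemma~\ref{sol}, which shows $H$ has codimension $1$. So suppose $K$ is not soluble and let $R = R(K)$ be its soluble radical, so that $\bar K = K/R$ is semisimple and nontrivial. Since $HR$ is closed and connected, maximality of $H$ gives $HR = H$ or $HR = K$; the latter is impossible, since then $\bar K \cong H/(H\cap R)$ would have dimension at most $2$, forcing $\bar K = 1$. Hence $R \leqs H$, and then $H/R$ is a maximal connected subgroup of $\bar K$ of dimension at most $2$. I claim this forces $\bar K = A_1$ with $H/R$ a Borel subgroup; granting the claim, $\dim H = \dim R + 2 \geqs 2$, so $\dim H = 2$, $R = 1$, $K = A_1$ and $\dim K = 3 = \dim H + 1$, as required. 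To prove the claim, suppose for contradiction that $\dim \bar K \geqs 4$. Then $\bar K$ has rank at least $2$ (a semisimple group of rank $1$ has dimension $3$), so its Borel subgroups have dimension at least $3$ and hence no parabolic subgroup has dimension at most $2$; and by the Borel--Tits theorem any maximal connected subgroup of $\bar K$ that is not parabolic is reductive, hence (being of dimension at most $2$) a torus, which lies in a maximal torus and a fortiori in a Borel subgroup, so is not maximal connected. Thus $\dim \bar K \leqs 3$, and being a nontrivial semisimple group it is $A_1$, with maximal connected subgroups the Borel subgroups of dimension $2$.

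The main obstacle is the dimension bound, and within it the semisimple case, which draws on the standard structure theory of reductive groups: the dimension formula for Borel subgroups, the Borel--Tits theorem on non-reductive subgroups lying in proper parabolics, and the fact that a reductive group of dimension at most $2$ is a torus. Everything else is elementary bookkeeping with the dimensions of subgroups in a chain.
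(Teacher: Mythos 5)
Your proof is correct and rests on the same two pillars as the paper's own argument: Lemma \ref{sol} for the soluble case, and the observation that the only insoluble connected group admitting a maximal connected subgroup of dimension at most $2$ is $A_1$. You package this as a single iterated codimension-one bound rather than the paper's separate case analyses for $m=2$ and $m=3$, but the content is essentially the same (and your treatment of the insoluble case, via the radical and Borel--Tits, is if anything more complete than the paper's).
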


\begin{proof}
The case $m=1$ is obvious, so assume $m \in \{2,3\}$.
If $\l(G)=2$ then $G$ has a maximal $T_1$ or $U_1$ subgroup and clearly $G$ is soluble, so $\dim G = 2$ by Lemma \ref{sol}. Conversely, if $\dim G = 2$ then $G$ is soluble and once again the result follows from Lemma \ref{sol}.

Now assume $\dim G = 3$. By the $m=2$ case already proved, $\l(G) \geqs 3$. If $G$ is soluble, then Lemma \ref{sol} implies that $\l(G)=3$, otherwise $G=A_1$ and $\l(G)=3$ since
\[
A_1 > U_1T_1 > T_1 > 1
\]
is unrefinable (here we write $U_k$ for a unipotent group of dimension $k$, and similarly $T_k$ for a $k$-dimensional torus). Finally, suppose $\l(G)=3$ and assume that $G$ is insoluble. If $G$ is reductive, it has a maximal connected subgroup of depth 2, hence of dimension 2, and the only possibility is $G = A_1$. Otherwise, let $U = R_u(G)$ be the unipotent radical of $G$. Since $G$ is insoluble, the previous sentence implies that $G/U \cong A_1$. But $G$ has a maximal subgroup of depth 2, which must be soluble of dimension 2. This is clearly not possible.
\end{proof}

\begin{rem}
Notice that the conclusion of Lemma \ref{depth23} does not extend to integers $m>3$. For example, if $r \geqs 2$ then the symplectic group $C_r$ has a maximal $A_1$ subgroup in characteristic $0$, so there are depth $4$ algebraic groups of arbitrarily large dimension.
\end{rem}

\begin{lem}\label{l:ss}
Let $G = UL$ be an algebraic group, where $U$ and $L$ are nontrivial connected subgroups of $G$, with $U$ normal. Then $\l(G) \geqs 1+\l(L)$.
\end{lem}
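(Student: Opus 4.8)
The plan is to induct on $\dim G$, using the elementary fact that
$\l(G) = 1 + \min\{\l(M) : M \text{ a maximal connected subgroup of } G\}$
(every unrefinable chain of $G$ begins $G > M > \cdots$ with $M$ maximal connected, and its tail is an unrefinable chain of $M$). So it suffices to show $\l(M) \ge \l(L)$ for every maximal connected subgroup $M$ of $G$ (we may assume $L < G$, otherwise there is nothing to prove). Since $U$ is connected and normal, $UM$ is a connected subgroup of $G$ containing $M$, so maximality of $M$ forces either $U \le M$ or $UM = G$; these are the two cases.

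\emph{Case $U \le M$.} By the Dedekind modular law, $M = M \cap UL = U(M \cap L)$, and taking identity components (recall $M$ is connected) gives $M = UL'$ with $L' := (M \cap L)^\circ$ a proper connected subgroup of $L$ (as $L' = L$ would force $M = G$). The crucial point is that $L'$ is in fact \emph{maximal} connected in $L$: if $L' \le Y \le L$ with $Y$ connected, then $M = UL' \le UY \le UL = G$, so $UY \in \{M, G\}$ by maximality of $M$; since $U \le M$ we have $L \cap U \le M \cap L$ and hence $(L \cap U)^\circ \le L'$, so the part of $Y$ not already in $L'$ cannot lie in $U$, and intersecting $UY$ back with $L$ (Dedekind again) and comparing identity components gives $Y = L'$ or $Y = L$. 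Granting this, if $L' \ne 1$ the inductive hypothesis applied to $M = UL'$ gives $\l(M) \ge 1 + \l(L')$, while maximality of $L'$ in $L$ gives $\l(L) \le 1 + \l(L')$, so $\l(M) \ge \l(L)$. If $L' = 1$ then $M = U$, and $1$ being maximal connected in $L$ forces $\dim L = 1$, so $\l(L) = 1 \le \l(U) = \l(M)$ by Lemma \ref{depth23}.

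\emph{Case $UM = G$ (so $U \not\le M$).} Here $M/(U \cap M) \cong G/U \cong L/(L \cap U)$, whence $\l(M) \ge \l(M/(U\cap M)) = \l(G/U)$. If $(L \cap U)^\circ = 1$ — in particular if $G = UL$ is a semidirect product, which is the setting in which the lemma is used — this already yields $\l(M) \ge \l(G/U) = \l(L)$. In general one must work harder: writing $N = (U \cap M)^\circ \trianglelefteq M$ and, when $N \ne 1$, descending into $M$ via a decomposition $M = NM'$ with $M' \ne 1$ (when such exists), the inductive hypothesis gives $\l(M) \ge 1 + \l(M') \ge 1 + \l(M/N) = 1 + \l(G/U)$, to be played off against Lemma \ref{add}(iii) applied to $(L\cap U)^\circ \trianglelefteq L$. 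This last case is the main obstacle: passing from $L$ to the quotient $G/U$ can \emph{a priori} lower the depth by more than one, so one has to show that any such drop is compensated by the internal structure of $M$, or else — using that depth is an isogeny invariant — reduce at the outset to the situation where $L \cap U$ is finite, where the argument above is clean.
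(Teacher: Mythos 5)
Your argument is correct on the domain where the lemma is true and applied, but it takes a genuinely different route from the paper's. The paper argues by contradiction: granting $\l(G)\geqs \l(L)$ from Lemma \ref{add}(iii), it assumes $\l(G)=\l(L)=t$, takes a minimal unrefinable chain $G=G_t>\cdots>G_0=1$, and pushes it into $G/U$ (identified with $L$); the images $G_iU/U$ must be strictly increasing, since a repetition would yield an unrefinable chain of $L$ of length less than $t$, and then at the largest $i$ with $U\not\leqs G_i$ one gets $G_i<G_iU<G_{i+1}$, contradicting unrefinability. Your induction on $\dim G$ via $\l(G)=1+\min_M \l(M)$, with the dichotomy $U\leqs M$ versus $UM=G$, is more structural and requires the extra observation (your Case 1) that $(M\cap L)^{\circ}$ is maximal connected in $L$, which the paper never needs; what the paper's argument buys is brevity, since the single ``wasted step'' $G_i<G_iU<G_{i+1}$ accounts for both of your cases at once. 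At bottom, both proofs establish $\l(G)\geqs 1+\l(G/U)$.

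That last remark bears on your worry about $(L\cap U)^{\circ}\ne 1$, which is well founded but reflects an imprecision in the statement rather than a gap in your argument: in that generality the lemma is false (take $L=G$ and $U$ any proper nontrivial connected normal subgroup; then $G=UL$ but $\l(G)\geqs 1+\l(G)$ fails). The intended hypothesis is that $L\cap U$ is finite, as holds in every application (unipotent radical against a Levi factor, central products of simple factors), and the paper's own proof tacitly assumes this when it writes $G_iU/U\leqs L$. Under that hypothesis $L\to G/U$ is an isogeny, so $\l(G/U)=\l(L)$ and your two cases already give the full result; the residual case you flag need not, and cannot, be pursued.
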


\begin{proof}
By Lemma \ref{add}(iii), $\l(G)\geqs \l(L)$. Assume for a contradiction that $\l(G)=\l(L)=t$, and let
\begin{equation}\label{unref}
G = G_t > G_{t-1} > \cdots > G_1 > G_0 = 1
\end{equation}
be an unrefinable chain. If $L_i = G_iU/U\leqs L$, then we must have $L_i < L_{i+1}$ for all $i$, since otherwise the depth of $L$ would be less than $t$. This means that
\[
G = G_tU > G_{t-1}U > \cdots > G_1U > G_0U = U
\]
is a chain of connected subgroups. But then if we choose $i$ minimal such that $U \not \leqs G_i$, we have $G_i < G_iU < G_{i+1}$, contradicting the unrefinability of \eqref{unref}.
\end{proof}

\section{Proofs}\label{s:proofs}

\subsection{Proof of Theorem \ref{lengthgen}}

Let $G$ be an algebraic group. The proof goes by induction on $\dim G$. For $\dim G = 1$, the result is obvious.

Write $U = R_u(G)$. By Lemmas \ref{add}(ii) and \ref{sol},
\[
l(G) = l(U)+l(G/U) = \dim U + l(G/U).
\]
If $U \ne 1$ then the conclusion follows by induction, so we assume that $U=1$; that is, $G$ is reductive. Write $G = G_1\cdots G_tZ$, a commuting product with each $G_i$ simple and $Z=Z(G)^0$, and let $B_i$ be a Borel subgroup of $G_i$. Note that $B = B_1\cdots B_tZ$ is a Borel subgroup of $G$. Let $r_i$ be the rank of $G_i$, so $r=\sum_{i}r_i$ is the semisimple rank of $G$.  By Lemma \ref{add}(ii) we have
\[
l(G) = \sum_{i} l(G_i)+\dim Z.
\]
If $t>1$ or $Z\ne 1$ we can apply induction to deduce that $l(G_i) = l(B_i)+r_i$ for each $i$, and hence
\[
l(G) = \sum_i \dim B_i + \sum_i r_i + \dim Z = \dim B + r,
\]
as required. Hence we may assume that $G$ is simple of rank $r$. By considering an unrefinable chain passing through $B$, noting that $l(B) = \dim B$ by Lemma \ref{add}, it follows that
\begin{equation}\label{e:borel}
l(G) \geqs \dim B + r.
\end{equation}
Our goal is to show that equality holds.

Let $M$ be a maximal connected subgroup of $G$ with $l(M) = l(G)-1$. By \cite[Corollary 3.9]{BT}, $M$ is either parabolic or reductive. Suppose first that $M$ is reductive and let $B_M$ be a Borel subgroup of $M$. By induction,
\[
l(M) = \dim B_M + {\rm rank}(M').
\]
But it is easy to see that $\dim B_M<\dim B-1$ and thus $l(G) < \dim B + r$. This contradicts \eqref{e:borel}, so we have reduced to the case where $M$ is a maximal parabolic subgroup.

Write $M = QL$ where $Q=R_u(M)$ and $L$ is a Levi subgroup. By induction,
\[
l(M) = \dim Q + \dim B_L + {\rm rank}(L'),
\]
where $B_L$ is a Borel subgroup of $L$. Since $B = QB_L$ is a Borel subgroup of $G$, and ${\rm rank}(L') = r-1$, it follows that $l(G) = l(M)+1 = \dim B+ r$, as required. This completes the proof of Theorem \ref{lengthgen}.

\begin{rem}\label{bchain}
Let $G$ be a simple algebraic group. By the proof of Theorem \ref{lengthgen}, it follows that every unrefinable chain of $G$ of maximum length includes a maximal parabolic subgroup. This gives  Corollary \ref{length}.
\end{rem}

\subsection{Proof of Theorem \ref{t:len}}

First consider (i). In view of Lemma \ref{sol}, the bound is clear if $G$ is soluble, so assume otherwise. Write $G/R(G) = G_1 \cdots G_t$, where each $G_i$ is simple. By applying Corollary \ref{length}, it is easy to see that $l(G_i) > \frac{1}{2}\dim G_i$ for each $i$, so by combining Lemmas \ref{add}(ii) and \ref{sol} we get
\[
l(G) = l(R(G)) + \sum_{i}l(G_i) > \dim R(G) + \frac{1}{2}\sum_{i}\dim G_i \geqs \frac{1}{2}\dim G
\]
as required. An entirely similar argument establishes (ii), noting that by Corollary \ref{length}, $l(G_i) = \dim G_i$ if and only if $G_i=A_1$ (this is easily deduced from Corollary \ref{length}; see Lemma \ref{cd0simple}).

\subsection{Proof of Theorem \ref{alg0}}

Let $G$ be a simple algebraic group over an algebraically closed field of characteristic $0$. The maximal connected subgroups of $G$ were determined by Dynkin \cite{Dynkin2, Dynkin} and we repeatedly apply these results throughout the proof. To begin with, let us assume $G$ is a classical group of rank $r$.

By Lemma \ref{depth23}, $\l(G) \geqs 3$ with equality if and only if $G=A_1$, so we may assume $r \geqs 2$. As observed by Dynkin \cite{Dynkin}, $G=C_r$ has an irreducible maximal subgroup of type $A_1$ and thus $\l(G)=4$. Similarly, $\l(G)=4$ if $G = B_r$ and $r \ne 3$. The group $G=B_3$ needs special attention because it does not have a maximal $A_1$ subgroup (indeed, an irreducibly embedded $A_1$ is contained in a $G_2$ subgroup of $G$). One checks that $\dim M > 3$ for every maximal connected subgroup $M$ of $G$, so Lemma \ref{depth23} implies that $\l(G) \geqs 5$. In fact, we see that equality holds since
\[
B_3>G_2>A_1>U_1T_1>T_1>1
\]
is an unrefinable chain of length $5$.

Next assume $G=D_r$, so $r \geqs 3$ by simplicity. Here $\l(G) \geqs 5$ since $G$ does not have a maximal $A_1$ subgroup. But $G$ does have an unrefinable chain of length $5$:
\[
\left\{\begin{array}{l}
D_r > B_{r-1} > A_1 > U_1T_1>T_1>1\;\; \mbox{if $r \ne 4$} \\
D_4 > A_2 > A_1 > U_1T_1>T_1>1
\end{array}\right.
\]
and thus $\l(G)=5$. To complete the proof for classical groups, suppose $G=A_r$ and $r \geqs 2$. Here $G$ has a maximal $A_1$ subgroup if and only if $r=2$, so we get $\l(G)=4$ if $r=2$, otherwise $\l(G) \geqs 5$. It is easy to see that $\l(G)=5$ if $r \geqs 3$ and $r \ne 6$. Indeed, we have the following unrefinable chains:
\[
\left\{\begin{array}{ll}
A_r > B_{r/2} > A_1 > U_1T_1>T_1>1 &  \mbox{if $r \geqs 4$ even, $r \ne 6$} \\
A_r > C_{(r+1)/2} > A_1 > U_1T_1>T_1>1 & \mbox{if $r \geqs 3$ odd.}
\end{array}\right.
\]
Note that if $r=6$ then the first chain is refinable (as noted above, $A_1$ is non-maximal in $B_3$) and we get $\l(G) \leqs 6$ via
\[
A_6 > B_3 > G_2 > A_1 > U_1T_1>T_1>1.
\]
We claim that $\l(G)=6$ in this case. To see this, let $M$ be a maximal connected subgroup of $G$. By \cite{Dynkin}, either $M=B_3$ or $M$ is a parabolic subgroup of the form $U_6A_5T_1$, $U_{10}A_4A_1T_1$ or $U_{12}A_3A_2T_1$ (here $U_k$ denotes a normal unipotent subgroup of dimension $k$). If $M$ is parabolic, then  Lemma \ref{add}(iii) implies that $\l(M) \geqs \l(A_k)$ for some $k \in \{3,4,5\}$ and thus $\l(M) \geqs 5$ by our above work. Since $\l(B_3)=5$, the claim follows.

Finally, let us assume $G$ is an exceptional group. By \cite{Dynkin2}, $G$ has a maximal $A_1$ subgroup if and only if $G \ne E_6$, so $\l(G)=4$ in these cases. For $G=E_6$ we have
$\l(G) \geqs 5$ and equality holds since $G$ has a maximal $G_2$ subgroup (see \cite{Dynkin2}) and so there is an unrefinable chain
\[
E_6 > G_2 > A_1 > U_1T_1>T_1>1.
\]
This completes the proof of Theorem \ref{alg0}.

\subsection{Proof of Theorem \ref{algp}(i)}\label{low}

Let $G$ be a simple algebraic group of rank $r$ over an algebraically closed field $K$ of characteristic $p > 0$. In this subsection we determine the precise depth of $G$ in the case where $G$ is of exceptional type.

We start with a preliminary lemma, which gives the precise depth of the simple algebraic groups of rank at most $4$. In Table \ref{tab:cl1}, if the final entry $c$ in a column occurs in the row corresponding to $p=\ell$, then $\l(G) = c$ for all $p \geqs \ell$. For example, Table \ref{tab:cl1} indicates that
\[
\l(C_4) = \left\{\begin{array}{ll}
7 & \mbox{if $p=2$} \\
6 & \mbox{if $p \in \{3,5,7\}$} \\
4 & \mbox{if $p \geqs 11$.}
\end{array}\right.
\]

\begin{lem}\label{l:lowrank}
Let $G$ be a simple algebraic group of rank $r \leqs 4$ in characteristic $p > 0$. Then $\l(G)$ is given in Table \ref{tab:cl1}.
\end{lem}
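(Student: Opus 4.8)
The plan is to treat each simple algebraic group $G$ of rank $r\leqs 4$ individually, proving matching upper and lower bounds for $\l(G)$ as a function of $p$. The list is short: $A_1$ in rank $1$; $A_2$, $B_2=C_2$ and $G_2$ in rank $2$; $A_3$, $B_3$ and $C_3$ in rank $3$ (recall $D_3\cong A_3$); and $A_4$, $B_4$, $C_4$, $D_4$ and $F_4$ in rank $4$. The engine of the argument is the recursion
\[
\l(G) = 1 + \min\{\l(M) : M \text{ is a maximal connected subgroup of } G\},
\]
which holds because a minimal unrefinable chain of $G$ has the form $G>M>\cdots$ with $M$ maximal connected and tail a minimal chain of $M$, while splicing a minimal chain of a depth-minimising $M$ below $G$ yields an unrefinable chain of the stated length. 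I would therefore work upwards through the ranks, at each stage feeding in the already-determined depths of lower-rank groups together with Lemma \ref{sol} (soluble groups have $\l = \dim$) and Lemma \ref{depth23} ($\l(H)=m\in\{1,2,3\}$ iff $\dim H = m$), and invoking the classification of maximal connected subgroups of $G$ (Dynkin in characteristic zero, and the work of Liebeck and Seitz in general, with explicit characteristic bounds).

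For the upper bounds I would exhibit, for each $G$ and each relevant prime, an explicit unrefinable chain realising the entry of Table \ref{tab:cl1} and ending in a soluble segment of the form $\cdots > A_1 > U_1T_1 > T_1 > 1$ (or another soluble bottom, whose depth equals its dimension). The key observation is that if $G$ has a maximal connected subgroup isomorphic to $A_1$ then the recursion gives $\l(G)\leqs 1+\l(A_1) = 4$; the classification records exactly when such an $A_1$ exists, namely for all sufficiently large $p$ in most cases, which accounts for the eventual stabilisation of each column. For the finitely many small primes where no maximal $A_1$ is available, I would route the chain through an intermediate subgroup of rank $1$, $2$ or $3$ --- for instance $B_3 > G_2 > A_1 > \cdots$ in place of a direct descent, or through a maximal parabolic $QL$ whose Levi $L$ has small derived group --- and read off the length from the depths already computed.

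The lower bounds are where the real work lies, and this is the step I expect to be the main obstacle. Here one must enumerate all maximal connected subgroups $M$ of $G$, both parabolic and reductive, and bound $\l(M)$ from below. For $M = QL$ a proper parabolic, $Q = R_u(M)\neq 1$ and $L$ is nontrivial, so Lemma \ref{l:ss} gives $\l(M)\geqs 1+\l(L)\geqs 1+\l(L')$ with $\l(L')$ known from the lower-rank cases (using isogeny-invariance of depth to pass between $L'$ and $L/Z(L)^0$); for $M$ reductive the value $\l(M)$ is again already in hand or is read off from Lemma \ref{depth23}. Substituting into the recursion yields the asserted lower bound. The delicate features are precisely the small-characteristic phenomena: the failure of $B_3$, of $D_r$, and --- in characteristic $2$ and $3$ --- of $C_r$ to contain a maximal $A_1$ (because the requisite irreducible $\mathrm{SL}_2$-modules either do not exist or are not maximally embedded), the exceptional isogenies in characteristic $2$ relating $B_n$ and $C_n$ and in characteristic $3$ affecting $G_2$ and $F_4$, which produce extra maximal subgroups, and the need to verify that each chain written down is genuinely unrefinable --- in particular that $A_1$ is maximal in $G_2$, that $G_2$ is maximal in $B_3$ and in $C_3$, and that the various rank-$2$ and rank-$3$ subgroups appearing are maximal in the relevant ambient group, each of which again depends on $p$. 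Once these maximal-subgroup lists are assembled, the remaining bookkeeping that produces the entries of Table \ref{tab:cl1}, and the check that each column is constant for $p$ at least the displayed threshold, is routine.
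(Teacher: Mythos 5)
Your proposal is correct and follows essentially the same approach as the paper: a case-by-case analysis of each group of rank at most $4$, using the recursion $\l(G)=1+\min_M\l(M)$ over maximal connected subgroups, explicit unrefinable chains terminating in $A_1>U_1T_1>T_1>1$ for the upper bounds, and the classification of maximal connected subgroups together with Lemma \ref{l:ss} (for parabolics) and the already-computed lower-rank depths for the lower bounds. The paper carries out the bookkeeping you defer, but the method, the key lemmas invoked, and the delicate small-characteristic points you flag (e.g.\ the non-maximality of $A_1$ in $B_3$, the chains through $G_2$) all match the paper's argument.
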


\begin{table}
\[
\begin{array}{ccccccccccccc} \hline
p & A_1 & A_2 & B_2 & G_2 & A_3 & B_3 & C_3 & A_4 & B_4 & C_4 & D_4 & F_4 \\ \hline
2 & 3 & 6 & 5 & 5 & 6 & 6 & 6 &  9  & 7 & 7 & 7 & 8 \\
3 &    & 4 & 5 & 5 & 5 & 6 & 5 &  6  & 5 & 6 & 7 & 6 \\
5 &    &    & 4 & 5 &    & 6 & 5 &  5  & 5 & 6 & 5 & 6 \\
7 &    &    &     & 4 &    & 5 & 4 &      & 5 & 6 &    & 5 \\
11 &  &    &      &    &    &    &    &      & 4 & 4 &    & 5 \\
> 11&&    &      &    &    &    &    &      &    &    &    & 4 \\ \hline
\end{array}
\]
\caption{The depth of low rank simple algebraic groups}
\label{tab:cl1}
\end{table}

\begin{proof}
First recall that $\l(G) \geqs 3$, with equality if and only if $G = A_1$. Now assume $r \geqs 2$ and let
\[
G > M > M_1 > \cdots > M_t = 1
\]
be an unrefinable chain of minimal length. Recall that $M$ is either parabolic or reductive. If $G$ is an exceptional group, then the possibilities for $M$ have been determined by Liebeck and Seitz (see \cite[Corollary 2(ii)]{LS}). Similarly, if $G$ is a classical group with natural module $V$, then \cite[Theorem 1]{LS98} implies that either $M$ stabilises a proper nontrivial subspace of $V$, or a tensor product decomposition of the form $V = U \otimes W$, or $M$ is simple and $V|_{M}$ is an irreducible $KM$-module with $p$-restricted highest weight. We refer the reader to \cite[Table 5]{Thomas} for a convenient list of the relevant reductive maximal connected subgroups of $G$. It will be useful to observe that $\l(M) \geqs 5$ if $M$ is a maximal parabolic subgroup of $G$ (this follows immediately from Lemma \ref{l:ss}).

Suppose $G = A_2$. If $p \geqs 3$ then $G$ has a maximal $A_1$ subgroup, so $\l(G)=4$. On the other hand, if $p=2$ then $M = U_2A_1T_1$ and $M_1 \in \{A_1T_1, U_2A_1\}$ are the only possibilities, so $\l(G) = 2+\l(M_1) = 6$.

Next let $G = B_2$ or $C_2$. If $p \geqs 5$ then $A_1$ is a maximal subgroup and thus $\l(G)=4$. If $p \in \{2,3\}$ then $\dim M > 3$ and thus $\l(G) \geqs 5$. In fact, equality holds since there is a chain
\[
B_2 > A_1A_1 > A_1 > U_1T_1 > T_1 > 1,
\]
where $A_1 < A_1A_1$ is diagonally embedded. Next assume $G = G_2$. In the usual manner, we deduce that $\l(G)=4$ if $p \geqs 7$, so let us assume $p \in \{2,3,5\}$. Here $\dim M > 3$ and thus $\l(G) \geqs 5$. Since
\[
G_2 > A_1\tilde{A}_1 > A_1 > U_1T_1 > T_1 > 1
\]
is unrefinable, we conclude that $\l(G)=5$.

Now assume $G = A_3$. Here $\dim M > 3$, so $\l(G) \geqs 5$. If $p \geqs 3$ then
\[
A_3 > A_1A_1 > A_1 > U_1T_1 > T_1 > 1
\]
is unrefinable and thus $\l(G)=5$. Suppose $p=2$, so either $M=B_2$ or $M$ is parabolic. Since $\l(B_2) = 5$ as above, and $\l(M) \geqs 5$ when $M$ is parabolic, it follows that $\l(G) = 6$.

Next suppose $G = B_3$, so $\l(G) \geqs 5$ since $\dim M > 3$. Now $G$ has a maximal  $G_2$ subgroup, so $\l(G) \leqs \l(G_2)+1$ and thus $\l(G)=5$ if $p \geqs 7$, otherwise $\l(G) \leqs 6$. By considering the various possibilities for $M$, it is easy to check that $\l(M) \geqs 5$ when $p \in \{2,3,5\}$ (we can assume $M$ is reductive, so the possibilities are listed in \cite[Table 5]{Thomas}) and we conclude that $\l(G)=6$. The case $G = C_3$ is similar. If $p \geqs 7$ then $M = A_1$ and $\l(G)=4$. Now assume $p \in \{2,3,5\}$, so $\l(G) \geqs 5$. If $p \in \{3,5\}$ then we can take $M = A_1A_1$, which gives $\l(G)=5$. Finally, if $p=2$ then one checks that $\l(M) \geqs 5$, with equality if $M = G_2$, hence $\l(G)=6$.

Suppose $G=A_4$. Here $\l(G) \geqs 5$ since $\dim M > 3$. If $p \geqs 5$ then
\[
A_4 > B_2 > A_1 > U_1T_1 > T_1 > 1
\]
is unrefinable and thus $\l(G)=5$. If $p=3$ then either $M = B_2$ or $M$ is a parabolic subgroup, whence $\l(G)=6$ since $\l(B_2)=5$ as above. Finally, suppose $p=2$. Here every maximal connected subgroup of $G$ is parabolic, so we need to consider the depth of $P_1 = U_4A_3T_1$ and $P_2 = U_6A_2A_1T_1$. The Levi factor of $P_1$ is maximal, so $P_1 > A_3T_1 > A_3$ is unrefinable and thus $\lambda(P_1) \leqs 8$ since $\lambda(A_3) = 6$. Now Lemma \ref{l:ss} gives  
$\lambda(P_1) \geqs 2+\l(A_3)=8$ and $\lambda(P_2) \geqs 3+\lambda(A_2) = 9$, so 
$\lambda(P_1) = 8$ and $\l(G)=9$.

Next assume $G=D_4$. Once again, $\l(G) \geqs 5$. If $p \geqs 5$ then $\l(G)=5$ since
\[
D_4 > A_2 > A_1 > U_1T_1 > T_1 > 1
\]
is unrefinable. Now assume $p \in \{2,3\}$. Here $\l(G) \leqs \l(B_3)+1 = 7$ and we claim that equality holds. To see this, first observe that $M$ is either a parabolic subgroup, or $M=B_3$, $A_1B_2$ ($p=3$), $A_1^4$ or $A_2$ ($p=2$). Note that $\l(B_3)=6$ and $\l(A_2)=6$ (with $p=2$ in the latter case). By applying Lemma \ref{l:ss}, it is also easy to see that $\l(M) \geqs 6$ in the remaining cases. For example, if $M = U_6A_3T_1$ then $\l(M) \geqs 1+\l(A_3T_1) \geqs 7$. This justifies the claim.

Next consider $G=B_4$. First observe that $\l(G)=4$ if and only if $p \geqs 11$. If $p \in \{3,5,7\}$ then we can take $M=A_1A_1$, which gives $\l(G)=5$. Now assume $p=2$. We claim that $\l(G)=7$. Certainly, $\l(G) \leqs 7$ since there is a chain
\[
B_4 > B_2B_2 > B_2 > A_1A_1 > A_1 > U_1T_1 > T_1 > 1.
\]
To establish equality, we need to consider the possibilities for $M$. If $M$ is reductive, then $M = D_4$, $A_1B_3$ or $B_2B_2$. By our earlier work, $\l(D_4)=7$ and $\l(A_1B_3) \geqs 1+\l(B_3) =7$. Similarly, $\l(B_2B_2) =6$ and one checks that $\l(M) \geqs 6$ if $M$ is parabolic. The claim follows.

Now assume $G=C_4$. As in the previous case, $\l(G)=4$ if and only if $p \geqs 11$. If $p \in \{3,5,7\}$ then
\[
C_4 > A_1^3 > A_1A_1 > A_1 > U_1T_1 > T_1 > 1
\]
is unrefinable and thus $\l(G) \leqs 6$. In fact, it is easy to see that $\l(M) \geqs 5$ for every connected maximal subgroup $M$ of $G$ and thus $\l(G)=6$.
Finally, suppose $p=2$. Here $\l(G) \leqs 7$ via the chain
\[
C_4 > C_2C_2 > C_2 > A_1A_1 > A_1 > U_1T_1 > T_1 > 1.
\]
We claim that $\l(G)=7$. To see this, we need to show that $\l(M) \geqs 6$ for every connected maximal subgroup $M$ of $G$. If $M$ is reductive, then $M = C_2C_2$, $A_1C_3$ or $D_4$. By combining Lemma \ref{l:ss} with our earlier work, we have $\l(A_1C_3) \geqs 1+\l(C_3)=7$ and $\l(D_4)=7$. It is also easy to see that $\l(C_2C_2) = 6$. It is routine to verify the claim when $M$ is parabolic. For instance, if $M = U_{10}A_3T_1$ then $\l(M) \geqs 1+\l(A_3T_1) \geqs 2+\l(A_3)= 8$.

To complete the proof of the lemma, we may assume $G=F_4$. Here $G$ has a maximal $A_1$ subgroup if and only if $p \geqs 13$, so we may assume $p<13$. If $p \in \{7,11\}$ then $\l(G)=5$ via the chains
\[
\left\{\begin{array}{ll}
F_4 > B_4 > A_1 > U_1T_1 > T_1 > 1 & \mbox{if $p=11$} \\
F_4 > G_2 > A_1 > U_1T_1 > T_1 > 1 & \mbox{if $p=7$.}
\end{array}\right.
\]
Next assume $p \in \{3,5\}$. Here $\l(G) \leqs 6$ since there is a chain
\[
F_4 > A_2\tilde{A}_2 > A_2 > A_1 > U_1T_1> T_1 > 1.
\]
By considering the various possibilities for $M$ and using Lemma \ref{l:ss} and our earlier work, it is easy to show that $\l(M) \geqs 5$ and thus $\l(G)=6$. Finally, let us assume $p = 2$. First observe that $\l(G) \leqs 8$ via the chain
\[
F_4 > C_4 > C_2C_2 > C_2 > A_1A_1 > A_1 > U_1T_1 > T_1 > 1.
\]
We claim that $\l(G)=8$. To see this, we need to show that $\l(M) \geqs 7$ for every maximal connected subgroup $M$ of $G$. If $M$ is reductive, then $M= C_4$, $B_4$ or $A_2\tilde{A}_2$. As above, we have $\l(B_4) = \l(C_4) = 7$ and $\l(A_2\tilde{A}_2) \geqs 1 + \l(A_2) = 7$ and the result follows. If $M = UL$ is a parabolic subgroup, with unipotent radical $U$, then Lemma \ref{l:ss} gives $\l(M) \geqs 1+\l(L)$ and it is straightforward to see that $\l(L) \geqs 6$. For example, if $M = U_{20}A_1A_2T_1$ then Lemma \ref{l:ss} yields $\l(L) \geqs 2+\l(A_2) = 8$. The result follows.
\end{proof}

We are now in a position to prove our main result for exceptional groups. In particular, part (i) of Theorem \ref{algp} is an immediate corollary of the following result. In Table \ref{tab:ex}, we adopt the same conventions as in Table \ref{tab:cl1}.

\begin{thm}\label{algp:ex}
Let $G$ be an exceptional algebraic group in characteristic $p > 0$. Then $\l(G)$ is given in Table \ref{tab:ex}.
\end{thm}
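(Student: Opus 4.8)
The cases $G = G_2$ and $G = F_4$ are already settled by Lemma~\ref{l:lowrank}, so the plan is to determine $\l(E_6)$, $\l(E_7)$ and $\l(E_8)$ for each prime $p$, by induction on the rank. The base of the induction is Lemma~\ref{l:lowrank} together with the depths of classical groups of rank between $5$ and $8$ (available from the analysis underlying Theorem~\ref{clup}), and for each group and prime I would prove matching upper and lower bounds.

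For the upper bound I would write down an explicit unrefinable chain of the predicted length. When $p$ is large enough that $G$ has a maximal closed connected subgroup of type $A_1$, the chain $G > A_1 > U_1T_1 > T_1 > 1$ gives $\l(G) = 4$; for smaller $p$ I would begin by descending through a well-chosen maximal subgroup — a maximal-rank subsystem subgroup such as $A_1E_7$, $A_2E_6$ or $D_8$ in $E_8$, $A_1D_6$ or $A_7$ in $E_7$, and $A_1A_5$ or $A_2^3$ in $E_6$, or else a maximal parabolic with a favourable Levi factor — so as to reach in a few steps a group whose depth is already known (ultimately one of rank at most $4$), and then append the standard tail through $A_1$, $U_1T_1$, $T_1$, $1$. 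Each such chain must be checked to be unrefinable; this uses the precise low-rank data of Table~\ref{tab:cl1} and the level structure of the relevant unipotent radicals.

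For the lower bound I would take an unrefinable chain $G > M > M_1 > \cdots > M_t = 1$ of minimal length, so $\l(G) = 1 + \l(M)$, and bound $\l(M)$ below. By Liebeck--Seitz \cite[Corollary~2]{LS} (see also \cite[Table~5]{Thomas}), $M$ is a maximal parabolic subgroup or one of a short explicit list of reductive maximal connected subgroups. If $M = UL$ is parabolic then Lemma~\ref{l:ss} gives $\l(M) \geqs 1 + \l(L)$, and since $L'$ is a commuting product of simple groups each of smaller rank than $G$, further applications of Lemma~\ref{l:ss} (peeling off the central torus of $L$) together with the inductive hypothesis bound $\l(L)$ from below. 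If $M$ is reductive then either $M$ has smaller rank than $G$ and induction applies, or $M$ is one of the finitely many maximal-rank subsystem subgroups; those that are nontrivial commuting products $AH$ (such as $A_1E_7$ and $A_4A_4$ in $E_8$, or $A_2A_5$ in $E_7$) are again handled by Lemma~\ref{l:ss} via $\l(M) \geqs 1 + \l(H)$, while the simple classical subsystem subgroups of full rank ($D_8$ and $A_8$ in $E_8$, $A_7$ in $E_7$) require feeding in the depth of the relevant classical group. In each case one checks that $\l(M)$ is at least one less than the claimed value of $\l(G)$, which together with the upper-bound chain forces equality; the characteristic-zero result (Theorem~\ref{alg0}) provides a useful consistency check for large $p$.

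The main obstacle is the bulk and delicacy of the case analysis for $E_7$ and especially $E_8$: there are many maximal connected subgroups to examine and, unlike in Lemma~\ref{l:lowrank}, several of them — the maximal-rank classical subsystem subgroups — have the same rank as $G$, so the induction on rank does not reach them and one must supply separate lower bounds for the depths of classical groups of rank up to $8$, which themselves unwind through a further maximal-subgroup recursion. A secondary complication is that the list of reductive maximal connected subgroups, and hence both the chains used for the upper bounds and the case division for the lower bounds, genuinely depends on $p$: in small characteristic — $p = 2$ above all — various generic maximal subgroups cease to be maximal connected or are replaced by others (for instance small $A_1$-type subgroups, and the special isogenies between types $B$ and $C$), so the computation must be carried out characteristic by characteristic.
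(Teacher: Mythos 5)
Your proposal follows essentially the same route as the paper: dispose of $G_2$ and $F_4$ via Lemma~\ref{l:lowrank}, then for $E_6$, $E_7$, $E_8$ prove matching bounds by exhibiting an explicit unrefinable chain for the upper bound and, for the lower bound, running through the Liebeck--Seitz list of maximal connected subgroups, using Lemma~\ref{l:ss} for parabolics and commuting products and supplying ad hoc lower bounds for the full-rank classical subsystem subgroups such as $A_7$, $A_8$ and $D_8$. The points you flag as delicate (the rank-$\leqs 8$ classical subgroups escaping the rank induction, and the $p$-dependence of the maximal subgroup lists) are exactly the ones the paper handles case by case.
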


\begin{table}
\[
\begin{array}{cccccc} \hline
p & G_2 & F_4 & E_6 & E_7 & E_8 \\ \hline
2 & 5 & 8 & 6 & 8 & 9 \\
3 & 5 & 6 & 6 & 7 & 7 \\
5 & 5 & 6 & 5 & 5 & 5 \\
7  & 4 & 5 &  & 5 & 5 \\
11 &  & 5 & &  5 &  5 \\
13 &   & 4 & & 5 & 5 \\
17 &  &  & & 4 & 5 \\
19 &   & & & & 5 \\
>19 &  &  &  &  & 4 \\ \hline
\end{array}
\]
\caption{The depth of exceptional algebraic groups}
\label{tab:ex}
\end{table}

\begin{proof}
In view of Lemma \ref{l:lowrank}, we may assume $G = E_6$, $E_7$ or $E_8$. Let
\[
G > M > M_1 > \cdots > M_t = 1
\]
be an unrefinable chain of minimal length. Recall that the possibilities for $M$ are determined in \cite{LS}.

First assume $G = E_6$ and note that $\l(G) \geqs 5$ since $\dim M > 3$. If $p \geqs 5$, then $G$ has a maximal $A_2$ subgroup and $\l(A_2) = 4$ by Lemma \ref{l:lowrank}, so $\l(G)=5$.
Now assume $p \in \{2,3\}$. Here $G_2<G$ is maximal and thus $\l(G) \leqs \l(G_2)+1 = 6$.
We claim that $\l(G)=6$. If $M$ is reductive, then \cite{LS} implies that
\[
M \in \{A_1A_5, A_2^3, G_2, C_4 \, (p=3), F_4, A_2G_2\}
\]
and it is easy to check that $\l(M) \geqs 5$. By applying Lemma \ref{l:ss}, we see that the same conclusion holds when $M$ is parabolic. This justifies the claim.

Next assume $G = E_7$. Here $\l(G)=4$ if and only if $p \geqs 17$. If $5 \leqs p \leqs 13$ then
\[
E_7>A_1A_1> A_1 > U_1T_1 > T_1 > 1
\]
is unrefinable and thus $\l(G)=5$. Now assume $p=3$. First observe that $\l(G) \leqs 7$ via the chain
\[
E_7 > A_1G_2 > A_1A_2 > A_1A_1 > A_1 > U_1T_1 > T_1 > 1.
\]
We claim that $\l(G)=7$. To see this, let $M$ be a maximal connected subgroup of $G$. Suppose $M$ is reductive, in which case
\[
M \in \{ A_1D_6, A_7, A_2A_5, A_1G_2, A_1F_4, G_2C_3 \}.
\]
Now $\l(D_6) \geqs 5$ and $\l(A_5) \geqs 5$ (neither group has a maximal $A_1$ subgroup) and one can readily check that $\l(A_7) \geqs 6$ (the only reductive maximal connected subgroups are $C_4$, $D_4$ and $A_1C_3$). Therefore, by applying Lemmas \ref{l:ss} and \ref{l:lowrank} we deduce that $\l(M) \geqs 6$. Similarly, one checks that the same bound holds if $M$ is parabolic and the claim follows.

Now assume $G=E_7$ and $p=2$. Here there is an unrefinable chain
\[
E_7 > G_2C_3 > G_2G_2 > G_2 > A_1\tilde{A}_1 > A_1 > U_1T_1 > T_1 > 1
\]
and thus $\l(G) \leqs 8$. By essentially repeating the above argument for $p=3$, it is straightforward to show that $\l(M) \geqs 7$ for every maximal connected subgroup $M$ of $G$, whence $\l(G)=8$.

To complete the proof of the theorem, we may assume $G = E_8$. If $p \geqs 23$ then $\l(G)=4$ since $G$ has a maximal $A_1$ subgroup. If $5 \leqs p  \leqs 19$ then $\l(G)=5$ via the chain
\[
E_8>B_2> A_1 > U_1T_1 > T_1 > 1.
\]
Now assume $p=3$. Here $\l(G) \leqs 7$ since there is a chain
\[
E_8 > A_8 > A_2A_2 > A_2 > A_1 > U_1T_1 > T_1 > 1.
\]
To see that $\l(G)=7$, we need to show that $\l(M) \leqs 6$. If $M$ is reductive then \cite[Corollary 2(ii)]{LS} implies that
\begin{equation}\label{e:M}
M \in \{D_8, A_1E_7, A_8, A_2E_6, A_4A_4, G_2F_4\}
\end{equation}
and it is straightforward to show that $\l(M) \geqs 6$ (recall that $\l(A_4) = \l(F_4) = \l(E_6) = 6$ and $\l(E_7)=7$). For example, if $M = A_8$ then either $M_1$ is parabolic and $\l(M_1) \geqs 5$, or $M_1 \in \{B_4, A_2A_2\}$ and $\l(M_1) = 5$. As usual, the bound $\l(M) \geqs 6$ is easily checked when $M$ is parabolic.

Finally, let us assume $G = E_8$ and $p=2$. There is an unrefinable chain
\[
E_8 > D_8 > B_4 > B_2B_2 > B_2 > A_1A_1 > A_1 > U_1T_1 > T_1 > 1
\]
and thus $\l(G) \leqs 9$. To establish equality, we need to show that $\l(M) \geqs 8$. If $M$ is reductive then \eqref{e:M} holds and we consider each possibility in turn. The bound 
is clear if $M = A_1E_7$, $A_4A_4$ or $G_2F_4$ since $\l(E_7) = \l(F_4)=8$ and $\l(A_4)=9$. Similarly, $\l(A_2E_6) \geqs 8$ since $\l(A_2) = \l(E_6)=6$. If $M = A_8$ and $M_1$ is reductive, then $M_1=A_2A_2$ is the only option and thus $\l(M_1) \geqs 1+\l(A_2) = 7$. It is easy to see that $\l(M_1) \geqs 7$ if $M_1$ is parabolic, so $\l(A_8) \geqs 8$ as required. Similarly, one checks that $\l(D_8) \geqs 8$. Finally, if $M$ is a maximal parabolic subgroup with Levi factor $L$, then Lemma \ref{l:ss} gives $\l(M) \geqs 2+\l(L')$ and it is straightforward to show that $\l(L') \geqs 6$. The result follows.
\end{proof}

\subsection{Proof  of Thereom \ref{algp}(ii)} Now assume $G$ is a simple classical algebraic group. The next result establishes the upper bound on $\l(G)$ in part (ii) of Theorem \ref{algp}.

\begin{thm}\label{clup}
If $G$ is a simple classical algebraic group of rank $r$, then
\[
\l(G) \leqs 2(\log_2r)^2+12.
\]
\end{thm}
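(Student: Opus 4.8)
The plan is to construct, for each simple classical group $G$ of rank $r$, an explicit unrefinable chain from $G$ down to $1$ of length $O(\log r)$; this is comfortably inside the stated bound, and the $(\log_2 r)^2$ form is only needed as a safe over-estimate. The organizing principle is that one can pass from $G$ to a classical group of roughly half the rank in a bounded number of steps, so after $O(\log r)$ such reductions one reaches a group of bounded rank, handled by Lemma \ref{l:lowrank}; throughout, the isogeny type is irrelevant by Lemma \ref{add}.

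\emph{Reduction to type $A$.} First I would show that each of $B_r$, $C_r$, $D_r$ reaches $A_{r-1}=\mathrm{SL}_r$ in at most four steps. For $C_r=\mathrm{Sp}_{2r}$ the stabiliser $\mathrm{GL}_r$ of a decomposition of the natural module into two complementary totally isotropic subspaces is a maximal connected subgroup when $p$ is odd, and when $p=2$ it lies inside $\mathrm{SO}_{2r}^+$, which is itself maximal connected in $\mathrm{Sp}_{2r}$; in either case $C_r > \mathrm{GL}_r > \mathrm{SL}_r$, with one extra step when $p=2$. Similarly $D_r=\mathrm{SO}_{2r}^+$ contains such a $\mathrm{GL}_r$ as a maximal connected subgroup, and $B_r=\mathrm{SO}_{2r+1}$ has $\mathrm{SO}_{2r}^+$ (the stabiliser of a suitably chosen non-singular $1$-space) as a maximal connected subgroup. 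Hence $\l(C_r),\l(D_r)\leqs \l(\mathrm{SL}_r)+3$ and $\l(B_r)\leqs \l(\mathrm{SL}_r)+4$.

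\emph{Halving the rank in type $A$.} Next I would prove $\l(\mathrm{SL}_r)\leqs \l(\mathrm{SL}_{\lfloor r/2\rfloor})+c$ for an absolute constant $c$. If $r$ is even, use the classical subgroup $\mathrm{Sp}_r<\mathrm{SL}_r$ (maximal connected, all $p$) to reach $C_{r/2}$ and then apply the previous paragraph. If $r$ is odd and $p\neq 2$, use $\mathrm{SO}_r<\mathrm{SL}_r$ in the same way, landing in $B_{(r-1)/2}$. If $r$ is odd and $p=2$, first pass through the maximal parabolic $P_1=U\rtimes \mathrm{GL}_{r-1}$: since $U$ is irreducible as a $\mathrm{GL}_{r-1}$-module, one checks easily that its Levi factor $\mathrm{GL}_{r-1}$ is itself a maximal connected subgroup of $P_1$, so $\mathrm{SL}_r>P_1>\mathrm{GL}_{r-1}>\mathrm{SL}_{r-1}$, after which the even case applies. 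Iterating gives $\l(\mathrm{SL}_r)\leqs c\lceil\log_2 r\rceil+\l(\mathrm{SL}_2)$, and combining with the previous paragraph yields $\l(G)\leqs c'\log_2 r+c''$ for absolute constants and every simple classical $G$, which implies the stated inequality; a cruder count, if one prefers to avoid the sharpest choices of subgroup, still stays below $2(\log_2 r)^2+12$. The small rank cases are covered by Lemma \ref{l:lowrank}.

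The main obstacle is not conceptual but the verification that every group used in these chains is genuinely a maximal \emph{connected} subgroup, which requires invoking the classification of maximal connected subgroups of classical groups \cite{LS98}: the inclusions $\mathrm{GL}_r<\mathrm{Sp}_{2r}$, $\mathrm{GL}_r<\mathrm{SO}_{2r}^+$, $\mathrm{Sp}_n,\mathrm{SO}_n<\mathrm{SL}_n$ and the parabolic structure all behave differently in characteristic $2$ (orthogonal groups sitting inside symplectic groups, degenerate odd-dimensional forms), so the chains must be justified case by case; Lemma \ref{l:ss} is convenient for the parabolic steps. A secondary, purely arithmetical point is to keep track of the additive constants through the recursion so that the final estimate indeed fits the claimed bound.
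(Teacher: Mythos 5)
Your overall strategy -- reach a bounded-rank group after $O(\log r)$ unrefinable steps by halving the rank in a bounded number of steps at a time -- is sound, and if every link in your chains were genuine it would even yield a bound of order $\log_2 r$ rather than $(\log_2 r)^2$. But the load-bearing link is wrong as stated: the connected stabiliser of a decomposition $V=U\oplus W$ into complementary maximal totally singular subspaces is \emph{not} a maximal connected subgroup of $\mathrm{Sp}_{2r}$ (nor of $\mathrm{SO}_{2r}^{+}$). The element interchanging $U$ and $W$ lies in a non-identity component, so the connected stabiliser fixes $U$ and $W$ individually and is precisely the common Levi subgroup $\mathrm{GL}_r=P_U\cap P_W$; in particular it is properly contained in the proper connected subgroup $P_U$, the parabolic stabilising $U$. (The familiar maximality of $\mathrm{GL}_r(q).2$ in $\mathrm{Sp}_{2r}(q)$ concerns the disconnected stabiliser and does not transfer to the connected setting; in the list from \cite[Theorem 1]{LS98} the only decomposition stabilisers that are maximal \emph{connected} are those of nondegenerate subspaces.) Since every reduction in your scheme passes through the step $G>\mathrm{GL}_r$, the recursion does not close as written. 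The repair is to insert the parabolic, $C_r>P_U>\mathrm{GL}_r>\mathrm{SL}_r$, and to check that the Levi is maximal connected in $P_U$, i.e.\ that $R_u(P_U)$ is an irreducible module for $\mathrm{GL}_r$: this holds for $S^2(U)$ when $p\neq 2$ (it fails for $p=2$ because of the Frobenius-twist submodule) and for $\Lambda^2(U)$ in the $D_r$ case in all characteristics, so your characteristic-$2$ detour through $\mathrm{SO}_{2r}^{+}$ is compatible with the fix. These verifications are not optional bookkeeping: unrefinability is the entire content of the theorem.

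For comparison, the paper never leaves the symplectic/orthogonal family. It descends via stabilisers of nondegenerate subspaces followed by diagonal subgroups, $C_{2m}>C_mC_m>C_m$, halving the rank in two steps when $r$ is a power of $2$ and paying a cost for each digit of the binary expansion of $r$ otherwise (whence the $(\log_2 r)^2$); types $A_r$ and $B_r$ are then handled by a single step into $C_{(r+1)/2}$, $C_{r/2}$ or $D_r$. Your route through $\mathrm{GL}_r$ and the forms subgroups $\mathrm{Sp}_r,\mathrm{SO}_r<\mathrm{SL}_r$ is a genuinely different descent and, once repaired, would be asymptotically sharper; but as submitted the maximality claims for $\mathrm{GL}_r$ are a real gap, not a routine verification.
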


We partition the proof into a sequence of lemmas, starting with the case where $G$ is a symplectic group. Note that $\l(B_r) = \l(C_r)$ when $p=2$.

\begin{lem}\label{l:cr}
Let $G=C_r$ and write $r = 2^{a_1}+\cdots+2^{a_k}$ with $a_1> a_2 > \cdots > a_k \geqs 0$. Then
\[
\l(G) \leqs 4k-1+2\sum_{i}a_i \leqs 2(\log_2 (r+1))(\log_2 r) + 5
\]
and the conclusion of Theorem \ref{clup} holds.
\end{lem}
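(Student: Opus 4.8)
The plan is to exhibit, for $r=2^{a_1}+\cdots+2^{a_k}$, an unrefinable chain from $C_r=Sp_{2r}$ down to the trivial group of length exactly $4k-1+2\sum_i a_i$; the bound $\l(C_r)\leqs 4k-1+2\sum_i a_i$ is then immediate, and the second inequality together with the conclusion of Theorem \ref{clup} will follow by elementary estimates. Two families of maximal connected subgroups do all the work. First, for $0<m<n$ the stabiliser in $C_n$ of a decomposition $V=V_1\perp V_2$ of the natural module into non-degenerate subspaces with $\dim V_1=2m$ is $C_mC_{n-m}$, and this is a maximal connected subgroup by the classification of maximal connected subgroups of classical groups \cite{LS98} (when $n=2m$ the full stabiliser is $C_m\wr 2$, but its identity component $C_mC_m$ is still maximal among connected subgroups). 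Secondly, since $C_m$ is simple, the diagonal copy of $C_m$ in $C_mC_m$ is maximal connected. I shall also use the unrefinable chain $A_1>U_1T_1>T_1>1$ of length $3$, recalling that $C_1=A_1$.

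\textbf{The construction.} First I would treat $r=2^a$ by induction on $a$: for $a=0$ the chain just displayed has length $3=2\cdot 0+3$, and for $a\geqs 1$ the two steps $C_{2^a}>C_{2^{a-1}}C_{2^{a-1}}>C_{2^{a-1}}$ reduce to the case $a-1$ and so produce a chain of length $2a+3$. For general $r$ I would induct on $k$, the case $k=1$ being the one just treated (note $4\cdot 1-1+2a_1=2a_1+3$). For $k\geqs 2$ the single step $C_r>C_{2^{a_1}}C_{r-2^{a_1}}$ splits off the leading binary digit, and inside this direct product one descends in the first factor only: running the $C_{2^{a_1}}$-chain constructed above term by term, with the second factor $C_{r-2^{a_1}}$ held fixed — legitimate since $M\times C_{r-2^{a_1}}$ is maximal connected in $N\times C_{r-2^{a_1}}$ whenever $M$ is so in $N$, and since $1\times C_{r-2^{a_1}}$ is maximal in $T_1\times C_{r-2^{a_1}}$ — one reaches $C_{r-2^{a_1}}$ after $2a_1+3$ further steps. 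Since $r-2^{a_1}=2^{a_2}+\cdots+2^{a_k}$ has $k-1$ binary digits, induction supplies a chain from $C_{r-2^{a_1}}$ to $1$ of length $4(k-1)-1+2\sum_{i\geqs 2}a_i$, and
\[
1+(2a_1+3)+\Big(4(k-1)-1+2\sum_{i\geqs 2}a_i\Big)=4k-1+2\sum_i a_i,
\]
giving $\l(C_r)\leqs 4k-1+2\sum_i a_i$.

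\textbf{The estimates.} Since $r\geqs 2^{k-1}+2^{k-2}+\cdots+2^0=2^k-1$ we have $k\leqs\log_2(r+1)$, and since $2^{a_1}\leqs r$ with the $a_i$ strictly decreasing non-negative integers we have $a_i\leqs a_1-(i-1)\leqs\log_2 r-(i-1)$, so $\sum_i a_i\leqs k\log_2 r-\tfrac12 k(k-1)$. Substituting,
\[
4k-1+2\sum_i a_i\leqs 2k\log_2 r-k^2+5k-1\leqs 2(\log_2(r+1))(\log_2 r)+5,
\]
where the last step uses $k\leqs\log_2(r+1)$ together with $(k-2)(k-3)\geqs 0$, which holds for every integer $k$. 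Finally, for $r\geqs 1$ one has $(\log_2(r+1))(\log_2 r)-(\log_2 r)^2=(\log_2 r)\log_2(1+1/r)\leqs(\ln r)/(r(\ln 2)^2)\leqs 1/(e(\ln 2)^2)<1$, so $\l(C_r)<2(\log_2 r)^2+7$, comfortably inside the bound $2(\log_2 r)^2+12$ of Theorem \ref{clup}.

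\textbf{The main point.} The construction and the arithmetic are routine; the only ingredients requiring a little care are the maximality of the two building-block subgroups (standard, being part of the classification of maximal connected subgroups of classical groups) and the bookkeeping for descending inside a direct factor, together with the degenerate cases $r=1$ and $r=2^a$ in which the outer induction collapses to its base. I do not anticipate a genuine obstacle here.
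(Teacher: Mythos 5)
Your proposal is correct and follows essentially the same route as the paper: induction on $k$ via the maximal subgroup $C_{2^{a_1}}C_{r-2^{a_1}}$, with the power-of-two case handled by repeatedly passing to $C_{2^{a-1}}C_{2^{a-1}}$ and then a diagonal $C_{2^{a-1}}$, followed by the same elementary estimates. The only cosmetic difference is that you descend explicitly inside the direct factor where the paper simply invokes the subadditivity of depth (Lemma \ref{add}(iii)).
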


\begin{proof}
In view of Lemma \ref{l:lowrank}, we may assume $r \geqs 5$. First observe that $a_1 \leqs \log_2r$ and $k \leqs \log_2(r+1)$, so
\begin{align*}
4k-1+2\sum_{i}a_i \leqs 4k - 1 + 2(k\log_2r - k(k-1)/2) & = 2k\log_2r-k^2+5k-1 \\
& \leqs 2(\log_2 (r+1))(\log_2 r) + 5.
\end{align*}
Therefore, it suffices to show that
\begin{equation}\label{e:cr}
\l(G) \leqs 4k-1+2\sum_{i}a_i.
\end{equation}
We proceed by induction on $k$. We will repeatedly use the fact that if $H$ is a symplectic group with natural module $V$, then the stabiliser in $H$ of any proper nondegenerate subspace of $V$ is a maximal connected subgroup of $G$.

First assume $k=1$, so $r=2^{a_1}$ and $a_1 \geqs 3$. Let $M$ be the stabiliser in $G$ of a nondegenerate $r$-space, so $M = C_{2^{a_1-1}}C_{2^{a_1-1}}$. Now $M$ has a diagonally embedded maximal subgroup of type $C_{2^{a_1-1}}$, so there is an unrefinable chain
\[
C_{2^{a_1}} > C_{2^{a_1-1}}C_{2^{a_1-1}} > C_{2^{a_1-1}}.
\]
By repeating this process, we can descend from $G$ to $C_1$ in $2a_1$ steps and thus
\[
\l(G) \leqs 2a_1+\l(C_1) = 2a_1+3.
\]
This establishes the bound in \eqref{e:cr} when $k=1$.

Now assume $k>1$. Let $M$ be the stabiliser in $G$ of a nondegenerate $2^{a_1}$-space, so $M = C_{2^{a_1}}C_{2^{a_2}+\cdots + 2^{a_k}}$. By Lemma \ref{add}, we have
\[
\l(M) \leqs \l(C_{2^{a_1}}) + \l(C_{2^{a_2}+\cdots + 2^{a_k}})
\]
so by induction we get
\begin{equation}\label{e:cr2}
\l(G) \leqs 1 + (3+2a_1) + \left(4(k-1)-1+2\sum_{i=2}^{k}a_i\right)
\end{equation}
and thus \eqref{e:cr} holds.
\end{proof}

\begin{lem}\label{l:ar}
If $G=A_r$, then $\l(G) \leqs 2(\log_2r)^2+4$.
\end{lem}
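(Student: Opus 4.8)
The plan is to reduce $A_r$ to a symplectic group of roughly half the rank in a bounded number of steps, and then to invoke Lemma~\ref{l:cr}. Write $n = r+1$, so that up to isogeny $A_r = \mathrm{SL}_n$. The bound holds for $r \leqs 4$ by Lemma~\ref{l:lowrank} (all the relevant entries of Table~\ref{tab:cl1} are at most $2(\log_2 r)^2+4$), so assume $r \geqs 5$.

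First suppose $n$ is even. Then $\mathrm{Sp}_n = C_{n/2}$ is a maximal connected subgroup of $\mathrm{SL}_n$ in every characteristic, so $\l(A_r) \leqs 1 + \l(C_{(r+1)/2})$. Now suppose $n$ is odd; here I would pass through a parabolic. Let $P = P_1$ be the stabiliser of a $1$-space in $\mathrm{SL}_n$, a maximal connected subgroup, with unipotent radical $U = R_u(P)$ and Levi factor $L \cong \mathrm{GL}_{n-1}$. The key point is that $U$ is abelian and $L$ acts on $U$ with no invariant subgroup other than $0$ and $U$ (it is the natural $(n-1)$-dimensional module, up to a twist); consequently $L$ is maximal among the connected subgroups of $P$. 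Since also $A_{n-2} = \mathrm{SL}_{n-1}$ is maximal connected in $L$ (with quotient $\mathbb{G}_m$), we get
\[
\l(A_r) \leqs 1 + \l(P) \leqs 2 + \l(L) \leqs 3 + \l(A_{n-2}) = 3 + \l(A_{r-1}).
\]
As $r = n-1$ is even, the previous case applies to $A_{r-1}$, giving $\l(A_{r-1}) \leqs 1 + \l(C_{r/2})$. In both cases, then,
\[
\l(A_r) \leqs 4 + \l\big(C_{\lceil r/2 \rceil}\big).
\]

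To conclude, put $m = \lceil r/2 \rceil \leqs (r+1)/2$ and apply Lemma~\ref{l:cr}, which gives $\l(C_m) \leqs 2(\log_2(m+1))(\log_2 m) + 5$. Using $\log_2 m \leqs \log_2(r+1) - 1$ and $\log_2(m+1) \leqs \log_2(r+3) - 1 \leqs \log_2 r$ (valid for $r \geqs 3$), a short computation shows $4 + \l(C_m) \leqs 2(\log_2 r)^2 + 4$ for all $r \geqs 6$; the single remaining case $r = 5$ is immediate since then $m = 3$ and $\l(C_3) \leqs 6$ by Table~\ref{tab:cl1}.

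The only step that is not pure bookkeeping is the maximality of the Levi $L$ inside the parabolic $P$: this is what makes the detour through $P$ cost a bounded number of steps instead of $\dim R_u(P) = \Theta(r)$, and it uses in an essential way that the unipotent radical of an end-node parabolic of type $A$ is abelian and acts irreducibly for the Levi. Everything else is an application of Lemmas~\ref{add} and~\ref{l:cr} together with elementary estimates for logarithms. (In fact one can do better by using the tensor-product subgroup $A_1A_{m-1} < A_{2m-1}$ to halve the rank at bounded cost, which yields $\l(A_r) = O(\log r)$; but the weaker bound above is all that is needed, and this route keeps the treatment uniform with the other classical types.)
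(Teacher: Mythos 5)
Your argument is correct and essentially identical to the paper's: for $n=r+1$ even you use the maximal subgroup $C_{n/2}<\mathrm{SL}_n$, and for $n$ odd your detour $A_r>P_1>L>A_{r-1}>C_{r/2}$ is exactly the paper's unrefinable chain $A_r>U_rA_{r-1}T_1>A_{r-1}T_1>A_{r-1}>C_{r/2}$, followed in both cases by an application of Lemma~\ref{l:cr}. One small bookkeeping point: because you pay the uniform constant $4$ in both parities, the simplified estimates $\log_2(m+1)\leqs\log_2 r$ and $\log_2 m\leqs\log_2(r+1)-1$ do not quite close the required inequality for $r\in\{6,7,8\}$ (they reduce it to $2\log_2 r\geqs 2(\log_2 r)\log_2(1+1/r)+5$, which fails by about $0.02$ at $r=8$), but the target inequality $4+2(\log_2(m+1))(\log_2 m)+5\leqs 2(\log_2 r)^2+4$ does hold for these $r$ by direct evaluation, and for odd $r$ you in any case have the sharper bound $1+\l(C_{(r+1)/2})$ from your first case.
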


\begin{proof}
By Lemma \ref{l:lowrank}, we may assume that $r \geqs 5$. If $r$ is odd then $C_{(r+1)/2}$ is a maximal connected subgroup of $G$, so Lemma \ref{l:cr} implies that
\[
\l(G) \leqs 1 + \l(C_{(r+1)/2}) \leqs 1 + 2(\log_2 ((r+3)/2))(\log_2 ((r+1)/2)) + 5 \leqs 2(\log_2r)^2+4
\]
as required. Similarly, if $r$ is even then
\[
A_r > U_rA_{r-1}T_1 > A_{r-1}T_1 > A_{r-1} > C_{r/2}
\]
is an unrefinable chain and a further application of Lemma \ref{l:cr} yields
\[
\l(G) \leqs 4 + \l(C_{r/2}) \leqs 4 + 2(\log_2 (r/2+1))(\log_2 (r/2)) +5 \leqs 2(\log_2r)^2+4.
\]
The result follows.
\end{proof}

\begin{lem}\label{l:dr}
Suppose $G=D_r$, where $r \geqs 3$. Then $\l(G) \leqs 2(\log_2r)^2+11$.
\end{lem}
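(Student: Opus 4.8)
The plan is to descend from $G = D_r$ to a subgroup of type $A_{r-1}$ in exactly three steps, passing through a parabolic subgroup with abelian unipotent radical, and then to invoke the bound on $\l(A_{r-1})$ supplied by Lemma~\ref{l:ar}.

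First I would fix a maximal parabolic subgroup $P = UL$ of $G$ whose Levi factor $L$ has derived subgroup $L' = A_{r-1}$; such a parabolic exists and has abelian unipotent radical $U = R_u(P)$. Here $L = L'T$ with $T$ a $1$-dimensional central torus, and $U$ is isomorphic, as an $L$-module, to $\wedge^2 W$, where $W$ is the natural $r$-dimensional module for $L'$. Since $\dim U = \binom{r}{2} > 0$ and $\wedge^2 W$ is an irreducible $L$-module (the wedge powers of the natural module of $\mathrm{SL}_r$ are irreducible in every characteristic), the Levi factor $L$ is a maximal connected subgroup of $P$; and $L'$ is a maximal connected subgroup of $L$ since $L/L'$ is $1$-dimensional. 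Hence
\[
G > P > L > L' = A_{r-1}
\]
is a chain in which each term is a maximal connected subgroup of its predecessor, and therefore $\l(G) \leqs 3 + \l(A_{r-1})$.

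It then remains to apply Lemma~\ref{l:ar}, which gives $\l(A_{r-1}) \leqs 2(\log_2(r-1))^2 + 4 \leqs 2(\log_2 r)^2 + 4$, so that $\l(G) \leqs 2(\log_2 r)^2 + 7 \leqs 2(\log_2 r)^2 + 11$, as required; for $r \in \{3,4\}$ one may alternatively cite Lemma~\ref{l:lowrank}, though the argument above is valid for all $r \geqs 3$. The one step that demands genuine care is the claim that $L$ is a maximal connected subgroup of $P$, which reduces to the irreducibility of $\wedge^2 W$ as an $A_{r-1}$-module in every characteristic; this is standard but should be cited explicitly. Should one wish to avoid it, a characteristic-dependent alternative is to use stabilizers of nonsingular $1$-spaces, taking $D_r > B_{r-1}$ when $p$ is odd and $D_r > C_{r-1}$ when $p = 2$, and then applying Lemma~\ref{l:cr} in the symplectic case.
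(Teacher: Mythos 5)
Your argument is correct, but it takes a genuinely different route from the paper. The paper splits into cases according to the characteristic and the parity of $r$: for $p=2$ it passes to the stabiliser $B_{r-1}$ of a nonsingular $1$-space and uses $\l(B_{r-1})=\l(C_{r-1})$ together with Lemma \ref{l:cr}; for $p\ne 2$ it runs an induction on the binary expansion of $r$ using stabilisers of nondegenerate subspaces ($D_{2^{a_1}}D_{2^{a_2}+\cdots+2^{a_k}}$, with the chain $D_{2^a}>D_{2^{a-1}}D_{2^{a-1}}>D_{2^{a-1}}$ and a separate reduction $D_r>D_3D_{r-3}$ for odd $r$), which is where the constant $11$ comes from. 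You instead descend $D_r>P>L>L'=A_{r-1}$ through the maximal parabolic with Levi $GL_r$ and abelian unipotent radical $\wedge^2W$, and then quote Lemma \ref{l:ar}; since that lemma precedes this one and depends only on Lemma \ref{l:cr}, there is no circularity. Your route is characteristic-free, avoids all case analysis, and gives the slightly better constant $7$. The one step you rightly flag --- maximality of $L$ in $P$ --- does hold: $\wedge^2W=V(\omega_2)$ is minuscule, hence irreducible in every characteristic, and any closed connected $L$-invariant subgroup of the vector group $U$ has Lie algebra an $L$-submodule of $U$ of the same dimension, so is trivial or all of $U$; the paper itself uses exactly this kind of maximality (e.g.\ ``the Levi factor of $P_1$ is maximal'' in the proof of Lemma \ref{l:lowrank}). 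The only weak point is your closing fallback: for odd $p$ the chain $D_r>B_{r-1}$ leads nowhere at this stage, since the bound for $B_{r-1}$ (Lemma \ref{l:br}) is proved \emph{after}, and by means of, the present lemma; but this is an aside and your main argument does not rely on it.
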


\begin{proof}
As usual, we may assume $r \geqs 5$. First assume $p=2$ and let $M = B_{r-1}$ be the stabiliser of a nonsingular $1$-space. Since $\l(B_{r-1}) = \l(C_{r-1})$ when $p=2$, Lemma \ref{l:cr} implies that
\[
\l(G) \leqs 1 + \l(M) \leqs 1 + 2(\log_2(r-1))(\log_2 r)+5 < 2(\log_2 r)^2 + 6.
\]
For the remainder, we may assume $p \ne 2$.

Suppose $r$ is even and write $r=2^{a_1}+\cdots + 2^{a_k}$, where $a_1>a_2> \cdots > a_k \geqs 1$. We claim that the upper bound on $\l(G)$ in \eqref{e:cr} holds, in which case
\[
\l(G) \leqs 2(\log_2(r+1))(\log_2 r)+5.
\]
To prove this, we use induction on $k$. Note that if $M$ is the connected component of the stabiliser in $G$ of a nondegenerate $\ell$-space of the natural module, with $1 \leqs \ell \leqs r$ and $\ell \ne 2$, then $M$ is a maximal connected subgroup of $G$ (if $\ell=2$ then $M = T_1D_{r-1}$ is the Levi factor of a parabolic subgroup of $G$).

First assume $k=1$, so $r \geqs 8$. We can construct an unrefinable chain
\[
D_r > D_{2^{a_1-1}}D_{2^{a_1-1}} > D_{2^{a_1-1}} > \cdots > D_8 > D_4D_4 > D_4
\]
of length $2a_1-4$, so $\l(G) \leqs 2a_1-4+\l(D_4) \leqs 2a_1+3$.

Now assume $k>1$ (and continue to assume $r$ is even). Then $M = D_{2^{a_1}}D_{2^{a_2}+\cdots+2^{a_k}}$ is a maximal closed connected subgroup of $G$, so
\[
\l(G) \leqs 1 + \l(M) \leqs 1 + \l(D_{2^{a_1}}) + \l(D_{2^{a_2}+\cdots+2^{a_k}})
\]
and by induction we deduce that \eqref{e:cr2} holds. The result follows.

Finally, let us assume $r \geqs 5$ is odd. If $r=5$ then $\l(G) \leqs 6$ since $B_2$ is a maximal subgroup of $G$, so we can assume $r \geqs 7$. Let $M$ be the connected component of the stabiliser in $G$ of a nondegenerate $6$-space. Then $M = D_3D_{r-3}$ is a maximal connected subgroup of $G$, so by the previous result for even rank, we get
\[
\l(G) \leqs 1 + \l(D_3D_{r-3}) \leqs 6 + \l(D_{r-3}) \leqs 6 + 2(\log_2 (r-2))(\log_2(r-3)) + 5
\]
and this yields $\l(G) \leqs 2(\log_2r)^2+11$ as required.
\end{proof}

The next lemma completes the proof of Theorem \ref{clup}.

\begin{lem}\label{l:br}
Suppose $G=B_r$, where $r \geqs 3$. Then $\l(G) \leqs 2(\log_2r)^2+12$.
\end{lem}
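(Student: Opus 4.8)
The plan is to reduce \ref{l:br} immediately to the two previously handled types, $C$ and $D$, according to the characteristic. By Lemma~\ref{l:lowrank} we may assume $r \geqs 5$: for $r \in \{3,4\}$ the table there gives $\l(B_r) \leqs 7 < 12 \leqs 2(\log_2 r)^2 + 12$, so there is nothing to prove in those cases. So from now on $r \geqs 5$, and I split according to whether $p = 2$ or $p \neq 2$.

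Suppose first that $p = 2$. As recorded just before Lemma~\ref{l:cr}, in this case $B_r$ and $C_r$ are isogenous, so $\l(B_r) = \l(C_r)$; since $l$ and $\l$ are insensitive to isogeny type, this equality needs no further justification. Lemma~\ref{l:cr} asserts that $C_r$ satisfies the conclusion of Theorem~\ref{clup}, i.e.\ $\l(C_r) \leqs 2(\log_2 r)^2 + 12$, and hence $\l(B_r) \leqs 2(\log_2 r)^2 + 12$ as well. (Alternatively, one can start from the sharper estimate $\l(C_r) \leqs 2(\log_2(r+1))(\log_2 r) + 5$ of Lemma~\ref{l:cr} and note the elementary inequality $2(\log_2 r)\log_2(1 + 1/r) \leqs 7$, valid for all $r \geqs 5$, which again gives the bound.)

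Now suppose $p \neq 2$, so that $G = B_r$ acts as the orthogonal group on a nondegenerate quadratic space $V$ of dimension $2r+1$. The connected stabiliser in $G$ of a nonsingular $1$-space of $V$ is a subgroup of type $D_r$, and, as used repeatedly in the proofs of Lemmas~\ref{l:cr} and \ref{l:dr}, this is a maximal connected subgroup of $G$. Prepending $G$ to a minimal unrefinable chain of this $D_r$ therefore shows $\l(G) \leqs 1 + \l(D_r)$, and applying Lemma~\ref{l:dr} (valid since $r \geqs 5 \geqs 3$) yields
\[
\l(G) \leqs 1 + \bigl(2(\log_2 r)^2 + 11\bigr) = 2(\log_2 r)^2 + 12,
\]
as required.

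I do not anticipate any real obstacle here: the only inputs are the existence of a maximal connected $D_r$ in $B_r$ in odd characteristic (a standard fact about subspace stabilisers in classical groups, already relied upon in the preceding lemmas) together with Lemmas~\ref{l:cr} and \ref{l:dr} and, in characteristic $2$, the isogeny $B_r \sim C_r$. If anything needs care it is only the trivial numerical check in the characteristic $2$ case, and even that is avoided by quoting Lemma~\ref{l:cr} in the form "the conclusion of Theorem~\ref{clup} holds".
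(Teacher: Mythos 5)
Your proof is correct and, for $p \neq 2$, is exactly the paper's argument: the paper's entire proof is the one-line observation that $D_r$ is a maximal connected subgroup of $G = B_r$, so $\l(G) \leqs 1 + \l(D_r) \leqs 2(\log_2 r)^2 + 12$ by Lemma~\ref{l:dr}. The only difference is your separate treatment of $p=2$ via the isogeny with $C_r$ and Lemma~\ref{l:cr}; this works, but it is not needed, since $D_r$ remains a maximal connected subgroup of $B_r$ in characteristic $2$ as well (compare the occurrence of $D_4$ among the reductive maximal connected subgroups of $B_4 \sim C_4$ with $p=2$ in the proof of Lemma~\ref{l:lowrank}), so the paper's uniform one-step reduction to Lemma~\ref{l:dr} applies in every characteristic.
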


\begin{proof}
This is an immediate corollary of Lemma \ref{l:dr} since $D_r$ is a maximal connected subgroup of $G$.
\end{proof}

\subsection{Proof of Theorem \ref{algp}(iii)}

Let $G=Cl(V)$ be a classical algebraic group of rank $r$ over an algebraically closed field $K$ of characteristic $p>0$, with  natural module $V$.  If $M$ is a maximal connected subgroup of $G$, then by \cite[Theorem 1]{LS98}, one of the following holds:
\begin{itemize}\addtolength{\itemsep}{0.2\baselineskip}
\item[(i)] $M$ is the connected stabiliser of a subspace $U$ of $V$ that is either totally singular, nondegenerate, or a nonsingular 1-space (the latter only when $G$ is orthogonal and $p=2$);
\item[(ii)] $M$ is the connected stabiliser $Cl(U)\otimes Cl(W)$ of a tensor product decomposition $V = U\otimes W$;
\item[(iii)] $M \in \mathcal{S}(G)$, the collection of maximal connected simple subgroups of $G$ such that $V$ is a $p$-restricted irreducible $KM$-module.
\end{itemize}

\begin{lem}\label{rkbd}
Let $G$ be as above,  let $M \in \mathcal{S}(G)$ and suppose $M$ is of classical type.
Then ${\rm rank}(M) > \sqrt{\log_p r}$.
\end{lem}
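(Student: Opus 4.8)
The plan is to exploit the fact that $M \in \mathcal{S}(G)$ acts on $V$ via a $p$-restricted irreducible module, so $\dim V$ is bounded below by the dimension of a suitable irreducible module for $M$, while $r = \mathrm{rank}(G)$ is roughly $\dim V$ (up to a factor of at most $2$). Thus it suffices to show that a classical-type group of rank $m$ cannot have a nontrivial $p$-restricted irreducible module of dimension exceeding roughly $p^{m^2}$; equivalently, any such module has dimension at most $p^{O(m^2)}$, which forces $m^2 > \log_p r$ up to constants, and one then checks the constant works out to give $\mathrm{rank}(M) > \sqrt{\log_p r}$.

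First I would reduce to a statement purely about $M$: write $m = \mathrm{rank}(M)$ and recall that since $M \in \mathcal{S}(G)$, the module $V$ is irreducible, $p$-restricted, and nontrivial, with $\dim V \leqslant 2r+1$ (in fact $\dim V \leqslant 2r+2$ suffices for the crude bound). A $p$-restricted irreducible $KM$-module has highest weight $\lambda = \sum_{i=1}^m c_i \omega_i$ with $0 \leqslant c_i \leqslant p-1$, so there are at most $p^m$ such weights. Next I would bound $\dim V$ in terms of $m$ and $p$: by a standard estimate (e.g. via Weyl's dimension formula, or simply by noting $V$ embeds in the tensor/Weyl module and crudely bounding), the dimension of the irreducible of highest weight $\lambda$ is at most the dimension of the corresponding Weyl module, which for a classical group of rank $m$ with $p$-restricted highest weight is bounded by something like $(p)^{m^2}$ — more precisely one can use that $\dim V \leqslant \prod_{\alpha > 0} \frac{\langle \lambda + \rho, \alpha \rangle}{\langle \rho, \alpha\rangle}$ and bound each of the $O(m^2)$ positive roots' contribution by $O(mp)$, giving $\dim V \leqslant (mp)^{O(m^2)} \leqslant p^{O(m^2)}$ (absorbing the $m$ into the exponent since $m \leqslant p^{m}$ crudely, or handling small $m$ by Lemma \ref{l:lowrank}'s ambient table separately).

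Then I would combine: $2r + 2 \geqslant \dim V$, and if $\mathrm{rank}(M) = m \leqslant \sqrt{\log_p r}$ we would get $\dim V \leqslant p^{cm^2} \leqslant p^{c\log_p r} = r^c$ for the relevant constant $c$; the point is to arrange the constants in the dimension bound so that this contradicts $\dim V \geqslant$ (roughly) $2r$ once $r$ is large, and to dispatch the finitely many small cases using the explicit low-rank data already tabulated. I would be slightly careful about which classical types $M$ can be and whether $G$ is linear, symplectic or orthogonal, but the argument is uniform: the key numerical input is only "rank $m$ classical group $\Rightarrow$ every nontrivial $p$-restricted irreducible has dimension $\leqslant p^{O(m^2)}$".

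The main obstacle is getting the dimension bound for $p$-restricted irreducibles of classical groups with the right shape — namely $p^{O(m^2)}$ rather than something with a worse dependence on $p$ — cleanly enough to extract the clean conclusion $\mathrm{rank}(M) > \sqrt{\log_p r}$. The naive Weyl-module bound has an extra polynomial-in-$m$ factor inside the base; the trick is that this is harmless because it can be absorbed into the exponent $m^2$ (as $m \geqslant 1$ and we have slack), or avoided entirely by invoking known sharp bounds on dimensions of irreducible modules for classical groups (the smallest faithful module has dimension linear in $m$, and $p$-restricted modules of large dimension are classified); I would likely cite such a bound rather than re-derive it, and spend the bulk of the write-up verifying that the constant is compatible with the stated inequality and clearing the small-rank cases from Lemma \ref{l:lowrank}.
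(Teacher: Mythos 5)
Your overall strategy is the same as the paper's: bound $\dim V$ above in terms of $m = \mathrm{rank}(M)$ via Weyl's dimension formula, then combine with $r < \dim V$. But there is a genuine quantitative gap at the key step. The conclusion $\mathrm{rank}(M) > \sqrt{\log_p r}$ contains no constants, so you need the clean bound $\dim V \leqslant p^{m^2}$; the bound $(mp)^{O(m^2)}$ that you get by estimating each numerator $\langle \lambda+\rho,\alpha^\vee\rangle$ by $O(mp)$ is not enough. From $r < \dim V \leqslant (mp)^{m^2}$ you only obtain $m^2\left(1+\log_p m\right) > \log_p r$, which does not imply $m > \sqrt{\log_p r}$, and your proposal to ``absorb the $m$ into the exponent'' cannot work because there is no slack in the exponent: the target inequality is sharp in $m^2$. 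Likewise your plan to ``arrange the constants'' against $\dim V \geqslant 2r$ fails for the same reason -- any exponent $c>1$ in $\dim V \leqslant r^c$ is compatible with $\dim V \geqslant 2r$ for large $r$.

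The missing observation is that each factor in Weyl's formula is at most $p$, not $O(mp)$: writing $\dim V(\lambda) = \prod_{\alpha>0} \langle\lambda+\rho,\alpha^\vee\rangle/\langle\rho,\alpha^\vee\rangle$, a $p$-restricted weight $\lambda=\sum_i c_i\omega_i$ satisfies $\langle\lambda,\alpha^\vee\rangle \leqslant (p-1)\langle\rho,\alpha^\vee\rangle$ for every positive root $\alpha$, since each $c_i\leqslant p-1$ and each $\langle\omega_i,\alpha^\vee\rangle\geqslant 0$; hence every factor is at most $p$ and $\dim V \leqslant \dim V(\lambda) \leqslant p^{N}$, where $N$ is the number of positive roots of $M$, with equality exactly for the Steinberg module. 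Since $N\leqslant m^2$ for each classical type, this gives $r < \dim V \leqslant p^{m^2}$ and hence $m>\sqrt{\log_p r}$ directly, with no asymptotics and no separate treatment of small ranks. This is precisely the paper's argument: the largest $p$-restricted irreducible is the Steinberg module of dimension $p^N$. With that one correction your proof closes up; without it, it does not establish the stated inequality.
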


\begin{proof}
Let $k={\rm rank}(M)$. Using Weyl's character formula, it is easy to see that the $p$-restricted irreducible $KM$-module of largest dimension is the Steinberg module, which has  dimension $p^N$, where $N$ is the number of positive roots in the root system of $M$. Since $N\leqs k^2$, it follows that $\dim V \leqs p^{k^2}$. The conclusion follows, as $r<\dim V$.
\end{proof}

Now we prove Theorem \ref{algp}(iii). As in the statement, define $e_1(p)=p$, and $e_{l+1}(p) = p^{e_l(p)^2}$ for $l>1$, and for $x \in \R$ set
\[
\psi_p(x) = {\rm  min}\left(l \, : \, e_l(p) \geqs x\right).
\]
Note that $e_l(p) = \sqrt{\log_p e_{l+1}(p)}$, and  for $x> p$ we have
\begin{equation}\label{psieq}
\psi_p(x) = 1+\psi_p\left(\sqrt{\log_p x}\right).
\end{equation}

\begin{thm}\label{t:lim}
If $G$ is a simple algebraic group of rank $r$ in characteristic $p>0$, then
$\l(G) \geqs \psi_p(r)$.
\end{thm}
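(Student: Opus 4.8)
The plan is to induct on the rank $r$ of $G$, using the bound $\psi_p(r) = 1 + \psi_p(\sqrt{\log_p r})$ from \eqref{psieq} together with the classification of maximal connected subgroups. If $r \leqs p$ then $\psi_p(r) = 1$, and since $\l(G) \geqs 1$ trivially the statement holds; more relevantly, if $r$ is small enough that $\psi_p(r) \leqs 3$ we can quote $\l(G) \geqs 3$ from the repeatedly-used base fact, so we may assume $r$ is large and $\psi_p(r) \geqs 4$. Let $G = G_t > G_{t-1} > \cdots > G_0 = 1$ be an unrefinable chain of minimal length, so $t = \l(G)$, and let $M = G_{t-1}$ be the first maximal connected subgroup. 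The goal is to show $M$ (or a suitable connected subgroup of it with large rank) satisfies $\l(M) \geqs \psi_p(r) - 1 = \psi_p(\sqrt{\log_p r})$, whence $\l(G) \geqs 1 + \l(M) \geqs \psi_p(r)$.

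First I would dispose of the non-simple possibilities for $M$. If $M$ is parabolic, write $M = QL$ with $L$ a Levi subgroup; then $L'$ is semisimple of rank $r-1$, so $L'$ has a simple factor of rank at least $(r-1)/(\text{number of factors})$, and in any case the largest simple factor $S$ of $L'$ has rank $\geqs \lfloor (r-1)/c \rfloor$ for a small constant $c$ (coming from the bounded number of nodes removed), certainly still $> \sqrt{\log_p r}$ for $r$ large; by Lemma \ref{l:ss} applied to $M = QL$ and then Lemma \ref{add}(iii) within $L$, we get $\l(M) \geqs 1 + \l(S) \geqs 1 + \psi_p(\text{rank}(S))$, and since $\text{rank}(S) \geqs \sqrt{\log_p r}$ and $\psi_p$ is nondecreasing this gives $\l(M) \geqs 1 + \psi_p(\sqrt{\log_p r}) \geqs \psi_p(r)$, even better than needed. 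The same reasoning handles the case where $M$ is reductive but not simple: its derived group has a simple factor of rank comparable to $r$ (using that the number of factors is bounded, e.g.\ by Lemma \ref{rkbd}-type dimension counts or just the structure of maximal rank subgroups), and Lemma \ref{add}(iii) plus induction finishes it. So the essential case is $M$ simple.

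Now suppose $M$ is a maximal connected simple subgroup of $G = \mathrm{Cl}(V)$. If $M$ arises as the stabiliser of a subspace or of a tensor decomposition (cases (i) and (ii) of the trichotomy from \cite{LS98}), then $M$ is reductive with a simple factor of rank at least roughly $r/2$ or $\sqrt{r}$ respectively, and the previous paragraph's argument applies. The genuinely new case is $M \in \mathcal{S}(G)$, acting $p$-restrictedly irreducibly on $V$. If $M$ is of classical type, Lemma \ref{rkbd} gives $\text{rank}(M) > \sqrt{\log_p r}$ directly, so by induction $\l(M) \geqs \psi_p(\text{rank}(M)) \geqs \psi_p(\sqrt{\log_p r})$, and we are done via $\l(G) \geqs 1 + \l(M)$. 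If $M$ is of exceptional type, its rank is at most $8$ and $\dim V$ is bounded by a function of $p$ and the highest weight; but $r < \dim V \leqs p^{N}$ with $N \leqs 120$ the number of positive roots, so $r$ is bounded in terms of $p$, contradicting our assumption that $r$ is large — hence this case does not occur once $r$ exceeds the relevant threshold, and for $r$ below the threshold $\psi_p(r)$ is small (at most $2$) so $\l(G) \geqs 3 \geqs \psi_p(r)$ holds trivially.

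The main obstacle I expect is making the bookkeeping in the parabolic and reductive-non-simple cases fully rigorous: one must check that removing a node (parabolic case) or passing to a subsystem subgroup of maximal rank still leaves a simple factor whose rank exceeds $\sqrt{\log_p r}$, i.e.\ that no maximal connected subgroup of a classical group of rank $r$ is built entirely out of simple factors each of rank $\leqs \sqrt{\log_p r}$. This follows from the fact that a semisimple subgroup with all factors of rank $\leqs s$ and a bounded number of factors has rank $O(s)$, which is far less than $r$ for large $r$; but one should be slightly careful with tensor-product subgroups $\mathrm{Cl}(U) \otimes \mathrm{Cl}(W)$, where balancing $\dim U \cdot \dim W = \dim V$ forces at least one factor to have rank $\gtrsim \sqrt{r}$, again comfortably above the $\sqrt{\log_p r}$ threshold. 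Everything else is a routine application of induction together with Lemmas \ref{add}, \ref{l:ss} and \ref{rkbd} and the relation \eqref{psieq}.
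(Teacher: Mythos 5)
Your proposal is correct and follows essentially the same route as the paper: induction on $r$ via the identity \eqref{psieq}, the trichotomy of maximal connected subgroups of classical groups from \cite{LS98}, Lemma \ref{rkbd} for the irreducible case, and in the subspace/tensor cases the observation that $M$ has a simple section of rank far exceeding $\sqrt{\log_p r}$. The paper's write-up is just more compressed (it dismisses all small $r$ at once via $r < e_3(p) \Rightarrow \psi_p(r) \leqs 2$, which also rules out the exceptional members of $\mathcal{S}(G)$ that you treat separately), so no substantive difference remains.
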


\begin{proof}
The proof proceeds by induction on $r$. If $r<e_3(p) = p^{p^{2p^2}}$, then $\psi_p(r) \leqs 2$, so the conclusion holds.

Now assume that $r \geqs e_3(p)$. Then certainly $r>8$, so $G$ is classical. Choose a maximal connected subgroup $M$ of $G$ such that $\l(M)=\l(G)-1$. Then $M$ is as in one of the possibilities (i)-(iii) above, and in case (iii) we have
${\rm rank}(M) > \sqrt{\log_p r}$, by Lemma \ref{rkbd}. In cases (i) and (ii), $M$ has a simple quotient $Cl(U)$ with $\dim U \geqs \sqrt{\dim V}$. Hence in any case, there is a simple connected group $H$ of rank at least $\sqrt{\log_p r}$, such that
$\l(M)\geqs \l(H)$. By induction,
\[
\l(H) \geqs \psi_p(\sqrt{\log_p r}),
\]
and so by (\ref{psieq}) we have
\[
\l(G) = 1+\l(M) \geqs 1+\psi_p(\sqrt{\log_p r}) = \psi_p(r).
\]
This completes the proof by induction.
\end{proof}

\vs

The  proof of Theorem \ref{algp} is now complete.

\vs

We conclude with an example showing that the lower bound $\psi_p(r)$ in Theorem \ref{algp}(iii) is of roughly the correct  order of magnitude.

\begin{ex}\label{slow}
Fix a prime $p \geqs 5$ and consider the series of embeddings of odd dimensional orthogonal groups via their Steinberg modules:
\[
B_{r_0} < B_{r_1} < \cdots < B_{r_k},
\]
where $r_0=1$, $r_1=(p-1)/2$ and $r_{l+1} = (p^{r_l^2}-1)/2$ for $l \geqs 1$. By \cite{sei}, each term in this series is maximal in the next, so $\l(B_{r_k}) \leqs k+3$.
\end{ex}

\subsection{Proof of Theorem \ref{cd0}}\label{cor3}

We begin by classifying the simple algebraic groups $G$ with $\l(G)=l(G)$.

\begin{lem}\label{cd0simple}
The only simple algebraic group $G$ satisfying $\l(G)=l(G)$ is $G = A_1$.
\end{lem}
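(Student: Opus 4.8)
The plan is to show that for any simple algebraic group $G$ of rank $r$ with Borel subgroup $B$, the equality $\l(G) = l(G)$ forces $r = 1$. First I would record the value of $l(G)$ from Corollary \ref{length}: $l(G) = \dim B + r$. Since $\dim B = \dim U + r$ where $U = R_u(B)$ is the maximal unipotent subgroup (of dimension equal to the number $N$ of positive roots), we have $l(G) = N + 2r$. In particular $l(A_1) = 1 + 2 = 3$, and by Lemma \ref{depth23} we also have $\l(A_1) = 3$, so $A_1$ does satisfy $\l(G) = l(G)$; the content of the lemma is the converse.

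Next I would argue that for $r \geqs 2$ the depth is strictly smaller than the length. The cleanest route is to compare both quantities against $\dim G$. By Lemma \ref{add}(i), $\l(G) \leqs l(G) \leqs \dim G$, and by Theorem \ref{t:len}(ii), $l(G) = \dim G$ if and only if $G \cong A_1$ (as a simple group this is the only possibility in the $A_1^t$ family). So for $r \geqs 2$ we already have $l(G) < \dim G$; but I want the sharper statement that $\l(G) < l(G)$. For this I would instead use the explicit chain bounds: an unrefinable chain of minimal length for a simple group of rank $r \geqs 2$ passes through a maximal connected subgroup $M$ which is either parabolic or reductive, and in either case one shows $\l(M) \leqs l(G) - 2$. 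If $M$ is parabolic, write $M = QL$; then $\l(M) \leqs \dim Q + \l(L)$ by Lemma \ref{add}(iii), and since $\dim Q + \dim B_L + (r-1) = \dim B < l(G)$, induction on rank (the base cases $r \leqs 4$ being covered by Lemma \ref{l:lowrank}, where one checks $\l(G) < l(G)$ directly) gives $\l(M) < l(G) - 1$, hence $\l(G) = 1 + \l(M) < l(G)$. If $M$ is reductive, then as in the proof of Theorem \ref{lengthgen} one has $\dim B_M < \dim B - 1$, so $l(M) < l(G) - 1$, and a fortiori $\l(M) \leqs l(M) < l(G) - 1$, again giving $\l(G) < l(G)$.

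The one point requiring a little care — and the main obstacle — is making the induction on rank genuinely bottom out and handling the reductive maximal subgroups $M$ that are not simple (so that "rank" in the inductive hypothesis must be interpreted via the simple factors): there one uses $\l(M) \leqs \sum_i \l(M_i) + \dim Z(M)^0$ together with $l(M) = \sum_i l(M_i) + \dim Z(M)^0$ and the already-established strict inequality $\l(M_i) \leqs l(M_i)$ for the simple factors, noting that equality $\l(M_i) = l(M_i)$ across all factors would force every $M_i$ to be $A_1$, which is incompatible with $\dim B_M < \dim B - 1$ unless we are below the range $r \geqs 2$. Assembling these cases shows $\l(G) < l(G)$ whenever $r \geqs 2$, completing the proof that $A_1$ is the unique simple algebraic group with $\l(G) = l(G)$.
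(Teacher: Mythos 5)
Your strategy is sound but genuinely different from the paper's for ranks $\geqs 3$. The paper simply plays the length formula $l(G)=\dim B+r$ (which grows quadratically in $r$) off against the depth computations and bounds already established -- Lemma \ref{l:lowrank}, Theorem \ref{algp:ex} and Theorem \ref{algp}(ii) (with Theorem \ref{alg0} implicitly covering characteristic zero) -- so that for $r\geqs 2$ the depth is simply too small to equal $\dim B+r$. You instead run an induction on rank through the first maximal subgroup $M$ of a minimal unrefinable chain, via Borel--Tits: in the reductive case $\l(M)\leqs l(M)=\dim B_M+{\rm rank}(M')\leqs (\dim B-2)+r=l(G)-2$ needs no induction at all, and in the parabolic case $\l(M)\leqs \dim Q+\l(L)$ drops strictly below $l(M)=l(G)-1$ as soon as some Levi factor of rank $\geqs 2$ (by induction) or a block $A_1^t$ with $t\geqs 2$ (since $\l(A_1^t)=t+2<3t=l(A_1^t)$) contributes a strict inequality. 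This buys independence from the heavier global depth bounds (Theorems \ref{clup} and \ref{algp:ex}), at the price of bottoming out at rank $2$, where the Levi of a maximal parabolic is a single $A_1$ and no strictness is available, so the explicit rank-$2$ depth computations are still required there.

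Three slips to repair, none fatal. First, the displayed identity ``$\dim Q+\dim B_L+(r-1)=\dim B$'' is false: the left-hand side equals $\dim B+r-1=l(M)=l(G)-1$; the surrounding logic survives once this is corrected. Second, the claim that having every $M_i\cong A_1$ is ``incompatible with $\dim B_M<\dim B-1$'' is not true (e.g.\ $A_1A_1<C_2$ with $p\in\{2,3\}$ has $\dim B_M=4<5$), but it is also unnecessary: the reductive case is already closed by the dimension count, and an $A_1^t$ Levi is handled by $\l(A_1^t)<l(A_1^t)$ for $t\geqs 2$. Third, Lemma \ref{l:lowrank} covers only positive characteristic; in characteristic zero the base cases follow from Theorem \ref{alg0}, which gives $\l(G)\leqs 6$ while $l(G)=\dim B+r\geqs 7$ for $r\geqs 2$ -- a point the paper's own proof also leaves implicit.
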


\begin{proof}
First observe that $\l(A_1)=l(A_1)=3$, by Lemma \ref{depth23}. Conversely, suppose $G$ is simple of rank $r>1$ and $\l(G)=l(G)$. We know that $l(G) = l(B)+r$ by  Corollary \ref{length}.
If $r\leqs 4$ or $G$ is exceptional, this contradicts Lemma \ref{l:lowrank} or Theorem \ref{algp:ex}. And if $r\geqs 5$, then Theorem \ref{algp}(ii) gives a contradiction.
\end{proof}

Now we prove Theorem \ref{cd0}. Let $G$ be a connected algebraic group over an algebraically closed field.
Suppose $\l(G)=l(G)$ and $G$ is insoluble.  Set $\bar{G} = G/R(G)$ and note that $\l(\bar{G}) = l(\bar{G})$. Write $\bar{G} = G_1 \cdots G_t$, where each $G_i$ is simple. Then $\l(G_i) = l(G_i)$ for each $i$.
By Lemma \ref{cd0simple}, this implies that $G_i \cong A_1$ for all $i$. Since $\l(A_1A_1)=4 < l(A_1A_1)=6$, we must have $t=1$, so $\bar{G} \cong A_1$, as in Theorem \ref{cd0}.

\subsection{Proof of Theorem \ref{t:cd}}\label{s:cd}

Let $G$ be an algebraic group and recall that ${\rm cd}(G) = l(G) - \l(G)$ is the chain difference of $G$.

\begin{lem}\label{l:cd1}
If $N$ is a connected normal subgroup of $G$, then
\[
{\rm cd}(G) \geqs {\rm cd}(N) + {\rm cd}(G/N).
\]
\end{lem}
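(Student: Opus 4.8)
The plan is to deduce this from the additivity properties of length and depth already established in Lemma~\ref{add}. Recall that part (ii) of that lemma gives exact additivity of length, $l(G) = l(N) + l(G/N)$, while part (iii) gives only the inequality $\l(G) \leqs \l(N) + \l(G/N)$. Subtracting, we obtain
\[
{\rm cd}(G) = l(G) - \l(G) = l(N) + l(G/N) - \l(G) \geqs l(N) + l(G/N) - \l(N) - \l(G/N),
\]
and the right-hand side is precisely ${\rm cd}(N) + {\rm cd}(G/N)$. So the proof is essentially a one-line combination of Lemma~\ref{add}(ii) and~(iii).

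The only point requiring a moment's care is to check that the hypotheses of Lemma~\ref{add} are met: we need $N$ to be a \emph{connected} normal subgroup so that $G/N$ is again a connected algebraic group and both $l(N)$, $\l(N)$, $l(G/N)$, $\l(G/N)$ are defined in the sense of the paper. This is exactly what is assumed in the statement, so there is no obstacle here. One should also note that Lemma~\ref{add} is stated for a connected algebraic group and a connected normal subgroup, which is the setting we are in.

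Since the argument is purely formal, there is no real "hard part''; the main thing to get right is the direction of the inequality when subtracting $\l$. Because length is additive but depth is only sub-additive, the chain difference is \emph{super}-additive, which is the content of the lemma. I would simply write out the two displayed lines above and conclude. If desired, one could also remark that equality need not hold in general, since $\l$ need not be additive (as illustrated by the $UA_1$ example in the introduction), but this is not needed for the statement.

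\begin{proof}
Since $N$ is connected and normal in $G$, the quotient $G/N$ is a connected algebraic group and Lemma~\ref{add} applies. By part~(ii) of that lemma, $l(G) = l(N) + l(G/N)$, while by part~(iii), $\l(G) \leqs \l(N) + \l(G/N)$. Therefore
\[
{\rm cd}(G) = l(G) - \l(G) \geqs \bigl(l(N) + l(G/N)\bigr) - \bigl(\l(N) + \l(G/N)\bigr) = {\rm cd}(N) + {\rm cd}(G/N),
\]
as required.
\end{proof}
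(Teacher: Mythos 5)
Your proof is correct and is exactly the paper's own argument: both combine the additivity of length from Lemma~\ref{add}(ii) with the subadditivity of depth from Lemma~\ref{add}(iii) and subtract. Nothing further is needed.
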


\begin{proof}
By Lemma \ref{add} we have $l(G) = l(N) + l(G/N)$ and $\l(G) \leqs \l(N) + \l(G/N)$. The conclusion follows.
\end{proof}

The analogous result for finite groups is \cite[Lemma 1.3]{BWZ}.

We now state some immediate consequences of the above lemma.

\begin{cor}\label{c:cd1}
\mbox{ }
\begin{itemize}\addtolength{\itemsep}{0.2\baselineskip}
\item[{\rm (i)}] If $N$ is a connected normal subgroup of $G$, then ${\rm cd}(G/N) \leqs {\rm cd}(G)$.
\item[{\rm (ii)}] If $1 = G_t \lhd G_{t-1} \lhd \cdots  \lhd  G_1 \lhd G_0 = G$ is a chain of connected subgroups of $G$, then ${\rm cd}(G) \geqs \sum_{i} {\rm cd}(G_{i-1}/G_i)$.
\item[{\rm (iii)}] If $G = G_1 \times \cdots \times G_t$, where each $G_i$ is connected, then
${\rm cd}(G) \geqs \sum_{i} {\rm cd}(G_i)$.
\end{itemize}
\end{cor}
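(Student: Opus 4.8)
All three statements are straightforward consequences of Lemma \ref{l:cd1}, and the plan is simply to unwind that lemma together with the nonnegativity of the chain difference. To set this up, recall that by Lemma \ref{add}(i) we have $\l(H) \leqs l(H)$ for every connected algebraic group $H$, and hence ${\rm cd}(H) \geqs 0$. For part (i), I would apply Lemma \ref{l:cd1} to the connected normal subgroup $N$ of $G$ to obtain ${\rm cd}(G) \geqs {\rm cd}(N) + {\rm cd}(G/N)$, and then discard the nonnegative summand ${\rm cd}(N)$ to conclude that ${\rm cd}(G/N) \leqs {\rm cd}(G)$.

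For part (ii), I would argue by induction on the length $t$ of the chain $1 = G_t \lhd \cdots \lhd G_1 \lhd G_0 = G$, the cases $t \leqs 1$ being trivial. Applying Lemma \ref{l:cd1} to the connected normal subgroup $G_1 \lhd G_0 = G$ gives ${\rm cd}(G) \geqs {\rm cd}(G_1) + {\rm cd}(G/G_1)$. The subchain $1 = G_t \lhd \cdots \lhd G_1$ has length $t-1$, so by induction ${\rm cd}(G_1) \geqs \sum_{i=2}^{t} {\rm cd}(G_{i-1}/G_i)$; substituting this into the previous inequality yields ${\rm cd}(G) \geqs \sum_{i=1}^{t} {\rm cd}(G_{i-1}/G_i)$, as required. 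Here I use at each stage that $G_i$ is connected and normal in $G_{i-1}$, so that Lemma \ref{l:cd1} applies and the successive quotients are again connected algebraic groups.

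Finally, part (iii) is the special case of (ii) arising from the direct-product structure. Setting $K_j = G_{j+1} \times \cdots \times G_t$, with $K_0 = G$ and $K_t = 1$, each $K_j$ is a connected normal subgroup of $K_{j-1} = G_j \times K_j$ with quotient $K_{j-1}/K_j \cong G_j$. Applying part (ii) to the chain $1 = K_t \lhd \cdots \lhd K_0 = G$ then gives ${\rm cd}(G) \geqs \sum_{j=1}^{t} {\rm cd}(K_{j-1}/K_j) = \sum_{j=1}^{t} {\rm cd}(G_j)$. There is no real obstacle in any of this; the only points needing care are the nonnegativity of ${\rm cd}$, which is immediate from Lemma \ref{add}(i), and the routine verification that connectedness and normality are preserved along the given chains so that Lemma \ref{l:cd1} genuinely applies at each step.
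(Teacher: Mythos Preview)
Your proposal is correct and follows exactly the route the paper intends: the corollary is stated there as ``immediate consequences'' of Lemma~\ref{l:cd1} with no further argument, and your write-up simply spells out the obvious induction and the nonnegativity of ${\rm cd}$ from Lemma~\ref{add}(i).
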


The next result bounds $\dim G$ in terms of ${\rm cd(G)}$ when $G$ is simple.

\begin{prop}\label{p:simple}
Let $G$ be a simple algebraic group in characteristic $p \geqs 0$. Then
\[
\dim G \leqs \left\{\begin{array}{ll}
2 {\rm cd}(G) + 3 & \mbox{if $p=0$} \\
2 {\rm cd}(G) + 40 & \mbox{for any $p$.}
\end{array}
\right.
\]
\end{prop}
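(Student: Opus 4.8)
The plan is to reduce the statement to an upper bound on the depth $\l(G)$ in terms of the rank $r$ of $G$. By Corollary \ref{length} we have $l(G) = \dim B + r$, where $B$ is a Borel subgroup of $G$; writing $N$ for the number of positive roots, $\dim B = N+r$ and $\dim G = 2N+r$, so $\dim B = \frac{1}{2}(\dim G + r)$ and hence $l(G) = \frac{1}{2}(\dim G + 3r)$. Combining this with $l(G) = {\rm cd}(G) + \l(G)$ yields the identity
\[
\dim G = 2\,{\rm cd}(G) + \bigl(2\l(G) - 3r\bigr),
\]
so it is enough to prove that $2\l(G) - 3r \leqs 3$ when $p=0$, and $2\l(G) - 3r \leqs 40$ for arbitrary $p$.

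Suppose first that $p = 0$ and apply Theorem \ref{alg0}. If $r \geqs 3$ then $\l(G) \leqs 6$, so $2\l(G) - 3r \leqs 12 - 9 = 3$; if $r = 2$ then $\l(G) = 4$, so $2\l(G) - 3r = 2$; and if $r = 1$ then $G = A_1$ and $2\l(G) - 3r = 6 - 3 = 3$. In all cases $2\l(G) - 3r \leqs 3$, with equality exactly for $A_1$, which gives the bound $\dim G \leqs 2\,{\rm cd}(G) + 3$.

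Now let $p > 0$. If $G = A_1$ then $2\l(G) - 3r = 3$, and if $G$ is of exceptional type then $r \geqs 2$ and Theorem \ref{algp}(i) gives $\l(G) \leqs 9$, so $2\l(G) - 3r \leqs 18 - 6 = 12$. Finally, if $G$ is a classical group of rank $r$, then Theorem \ref{clup} gives $\l(G) \leqs 2(\log_2 r)^2 + 12$, so that
\[
2\l(G) - 3r \leqs 4(\log_2 r)^2 - 3r + 24.
\]
An elementary one-variable estimate shows that $4(\log_2 r)^2 - 3r < 17$ for every real $r \geqs 1$ (the left-hand side is maximised in the region $r \approx 16$, where it takes the value $16$), so the right-hand side above is strictly less than $41$. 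Since $2\l(G) - 3r$ is an integer, we conclude $2\l(G) - 3r \leqs 40$, as required.

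The argument is routine once the identity $\dim G = 2\,{\rm cd}(G) + 2\l(G) - 3r$ is in place; the only mildly delicate point is the classical case in positive characteristic, where the $(\log_2 r)^2$-bound of Theorem \ref{clup} offset by the linear term $-3r$ is largest near $r \approx 16$. This is precisely what forces the constant $40$, and what makes the integrality of $2\l(G) - 3r$ (equivalently, the sharper rank-specific bounds of Lemmas \ref{l:cr}, \ref{l:ar}, \ref{l:dr} and \ref{l:br} for the few ranks near $16$) necessary to close the estimate.
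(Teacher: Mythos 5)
Your proof is correct and follows essentially the same route as the paper: both rest on the identity ${\rm cd}(G)=\dim B+r-\l(G)$ (equivalently $\dim G=2\,{\rm cd}(G)+2\l(G)-3r$) from Corollary \ref{length}, then feed in the depth bounds of Theorems \ref{alg0}, \ref{algp} and \ref{clup}, using integrality to round $4(\log_2 r)^2-3r+24$ (whose supremum is just over $40$) down to $40$, exactly as the paper does with its floor function. One tiny inaccuracy: the supremum of $4(\log_2 r)^2-3r$ over real $r\geqs 1$ is about $16.06$, attained near $r\approx 15$, so it is not ``the value $16$'' at $r\approx 16$; but since your argument only invokes the bound $<17$, this does not affect the proof.
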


\begin{proof} Let $r$ be the rank of $G$.
Using Corollary \ref{length} and its notation we obtain
\begin{equation}\label{cdeq}
{\rm cd}(G) = \dim B + r - \l(G).
\end{equation}
Suppose first that $p=0$, and let $c$ be the value of $\l(G)$ as in Theorem \ref{alg0}.
Then we have
\[
{\rm cd}(G) = \dim B + r - c \geqs \dim B - 2.
\]
Therefore
\[
\dim G \leqs 2 \dim B - 1 \leqs 2{\rm cd}(G) + 3,
\]
as required.

Suppose now that $p>0$. Applying \eqref{cdeq} and Theorem \ref{algp} we obtain
\[
\dim G = 2\dim B-r \leqs 2{\rm cd}(G)+\lfloor 24-3r+4(\log_2r)^2 \rfloor,
\]
and the right hand side is at most $2{\rm cd}(G)+40$.
\end{proof}

The next result is of a similar flavour, dealing with certain semisimple groups.

\begin{prop}\label{p:ss}
Let $G = S^k$ where $k \geqs 2$ and $S$ is a simple algebraic group in characteristic $p \geqs 0$.
Then
\[
\dim G \leqs \left\{\begin{array}{ll}
2 {\rm cd}(G) + 2 & \mbox{if $p=0$} \\
2 {\rm cd}(G) + 28 & \mbox{for any $p$.}
\end{array}
\right.
\]
\end{prop}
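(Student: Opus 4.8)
The plan is to bound the chain difference ${\rm cd}(G) = l(G) - \l(G)$ from below and compare it with $\dim G = k\dim S$. Repeated application of Lemma \ref{add}(ii) gives $l(G) = k\,l(S)$. For the depth I would use the chain
\[
S^k > S^{k-1} > \cdots > S^2 > S > \cdots > 1,
\]
where each step from a product of $j$ copies of $S$ to a product of $j-1$ copies is the passage to the subgroup obtained by identifying two of the factors diagonally, and the tail from $S$ to $1$ is an unrefinable chain of $S$ of length $\l(S)$. The point needing the most care is to check that each diagonal subgroup is a \emph{maximal} connected subgroup of the group above it; by a Goursat-type argument this reduces to the fact that the diagonal of $S\times S$ is maximal among connected subgroups, which holds because a proper closed connected subgroup of $S\times S$ surjecting onto both factors meets each factor in a finite group, hence has dimension $\dim S$, and so cannot properly contain the diagonal. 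Granting this, $\l(G)\leqs (k-1)+\l(S)$, and therefore
\[
{\rm cd}(G) \geqs k\,l(S) - (k-1) - \l(S).
\]

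Next I would substitute the exact value $l(S) = \dim B + r$ coming from Corollary \ref{length}, where $r = {\rm rank}(S)$ and $B$ is a Borel subgroup of $S$. Since $2\dim B = \dim S + r$ we have $\dim S = 2\,l(S) - 3r$, so for a constant $c$ we obtain
\begin{align*}
2\,{\rm cd}(G) + c - \dim G &\geqs 2k\,l(S) - 2(k-1) - 2\l(S) + c - k\bigl(2\,l(S) - 3r\bigr) \\
&= k(3r-2) - 2\l(S) + 2 + c.
\end{align*}
As $k \geqs 2$ and $3r - 2 \geqs 1$, the right-hand side is at least $6r - 2\l(S) - 2 + c$, so the proposition follows once we know that $\l(S) \leqs 3r + \tfrac{c}{2} - 1$ for every simple algebraic group $S$ of the relevant characteristic.

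It remains to verify this from the depth bounds already established. In characteristic $0$, Theorem \ref{alg0} gives $\l(S) \leqs 6$; since $A_1$ is the only simple group of rank $1$ and $\l(A_1) = 3 = 3r$, while $\l(S) \leqs 6 \leqs 3r$ for $r \geqs 2$, we get $\l(S) \leqs 3r$ in all cases, which is the bound required for $c = 2$. In characteristic $p > 0$, Theorem \ref{algp} gives $\l(S) \leqs 9$ for exceptional $S$ and $\l(S) \leqs 2(\log_2 r)^2 + 12$ for classical $S$; since $2(\log_2 r)^2 + 12 \leqs 3r + 13$ for all $r \geqs 1$ (an elementary estimate) and $9 \leqs 3r + 13$, we get $\l(S) \leqs 3r + 13$ in all cases, which is the bound required for $c = 28$. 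The only non-routine ingredient in all of this is the maximality of the diagonal subgroups of $S^k$ — equivalently, the subdirect-product structure of closed connected subgroups of a power of a simple algebraic group — while the remainder is bookkeeping with the length formula and the tabulated depths.
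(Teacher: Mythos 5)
Your proposal is correct and follows essentially the same route as the paper: an unrefinable diagonal chain giving $\l(S^k)\leqs k-1+\l(S)$, the identity $l(S^k)=k\,l(S)$ together with Corollary \ref{length}, and a final numerical check against the depth bounds of Theorems \ref{alg0} and \ref{algp}. The only (harmless) difference is that you keep the exact relation $\dim S = 2\dim B - r$ throughout, so your residual inequality is $\l(S)\leqs 3r+\tfrac{c}{2}-1$ rather than the paper's slightly weaker $2(\l(S)-1)-k(2r-1)\leqs c$; both verifications go through.
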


\begin{proof}
By considering a series of diagonal subgroups, we can construct an unrefinable chain
\[
S^k > S^{k-1} > \cdots > S
\]
of length $k-1$, so $\l(S^k) \leqs k-1+\l(S)$. This yields
\[
{\rm cd}(G) = k\cdot l(S) - \l(S^k) \geqs k\cdot l(S) - (k-1+\l(S)) = k(l(S)-1)-\l(S) + 1.
\]
Corollary \ref{length} shows that if $B$ is a Borel subgroup of $S$, and $r = {\rm rank}(S)$, then
${\rm cd}(G) \geqs k(\dim B + r-1) -\l(S) + 1$, and so
\[
k\dim B \leqs {\rm cd}(G) - k(r-1) + \l(S) - 1.
\]
Since $\dim G = k\dim S \leqs k(2\dim B-1)$, we see that
\[
\dim G \leqs 2({\rm cd}(G) -k(r-1) + \l(S) - 1) - k = 2{\rm cd}(G) + a,
\]
where $a = 2 \l(S) - 2k(r-1) - 2 -k = 2 (\l(S)-1) - k(2r-1)$.

If $p=0$ then it is easy to check using Theorem \ref{alg0} that $a \leqs 2$ in all cases, as required.

Now suppose $p>0$. Then Theorem \ref{algp} yields
\[
a \leqs \lfloor 2\left(2(\log_2r)^2 + 11\right) - k(2r-1) \rfloor \leqs \lfloor 4(\log_2r)^2 - 4r + 24 \rfloor,
\]
which is at most $28$.
\end{proof}

\begin{lem}\label{summ}
Let $S_1,\ldots,S_n$ be simple algebraic groups that are pairwise non-isomorphic. Then
$\sum_{i} \dim S_i \geqs n^2$.
\end{lem}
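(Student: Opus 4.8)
The plan is to reduce the statement to an elementary growth estimate. List the dimensions of all simple algebraic groups, one for each type, in non-decreasing order as $d_1 \leqs d_2 \leqs d_3 \leqs \cdots$; thus $d_1 = \dim A_1 = 3$, $d_2 = \dim A_2 = 8$, $d_3 = \dim B_2 = 10$, $d_4 = \dim G_2 = 14$, and so on (for $r \geqs 3$ the two types $B_r$ and $C_r$ contribute the common value $r(2r+1)$ twice). Since $S_1,\dots,S_n$ are pairwise non-isomorphic, after relabelling so that $\dim S_1 \leqs \cdots \leqs \dim S_n$ we have $\dim S_i \geqs d_i$ for every $i$. Hence $\sum_i \dim S_i \geqs \sum_{i=1}^n d_i$, and since $\sum_{i=1}^n (2i-1) = n^2$ it is enough to prove that $d_n \geqs 2n-1$ for all $n \geqs 1$.

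For this, let $g(D)$ denote the number of simple algebraic groups of dimension at most $D$. As $d_1,\dots,d_n$ are the dimensions of $n$ distinct simple groups, all at most $d_n$, we have $g(d_n) \geqs n$; so it suffices to establish $g(D) \leqs \tfrac{1}{2}(D+1)$ for all $D$, which then gives $n \leqs g(d_n) \leqs \tfrac{1}{2}(d_n+1)$, that is, $d_n \geqs 2n-1$. The key point is that a simple group of a given rank has large dimension: $\dim A_r = r(r+2)$, $\dim B_r = \dim C_r = r(2r+1)$ and $\dim D_r = r(2r-1)$, so every classical simple group of rank $r$ has dimension at least $r^2$. Since there are at most four classical types of each rank and only five exceptional types altogether,
\[
g(D) \;\leqs\; 4\cdot\#\{r \geqs 1 : r^2 \leqs D\} + 5 \;\leqs\; 4\sqrt{D} + 5.
\]

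A short calculation shows that $4\sqrt{D} + 5 \leqs \tfrac{1}{2}(D+1)$ precisely when $D \geqs 81$ (the inequality is equivalent to $(\sqrt{D} - 9)(\sqrt{D} + 1) \geqs 0$), so the bound on $g$ holds for all $D \geqs 81$. For the finitely many remaining values one checks $g(D) \leqs \tfrac{1}{2}(D+1)$ directly from the list of simple types of small dimension: there are exactly $23$ such types of dimension at most $80$, with dimensions
\[
3,\ 8,\ 10,\ 14,\ 15,\ 21,\ 21,\ 24,\ 28,\ 35,\ 36,\ 36,\ 45,\ 48,\ 52,\ 55,\ 55,\ 63,\ 66,\ 78,\ 78,\ 78,\ 80,
\]
and the inequality is immediate for each value $D$ in this list. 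The only step requiring any care is this bookkeeping for the small ranks together with the exact dimension formulas; the rest of the argument is routine, so I do not expect a genuine obstacle.
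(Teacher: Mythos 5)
Your proof is correct, and it rests on the same counting fact as the paper's --- for each rank there are at most four classical types, plus five exceptional types, and every simple group of rank $r$ has dimension at least $r^2$ --- but you deploy it differently. The paper turns this into ${\rm rank}(S_i)\geqs i/4$, hence $d_i\geqs i^2/16$, sums to get $\sum_i d_i>n^3/48$, which beats $n^2$ only for $n>48$, and then disposes of $n\leqs 48$ with an unspecified ``computation''. You instead prove the termwise bound $d_n\geqs 2n-1$ (via $g(D)\leqs 4\sqrt{D}+5\leqs \tfrac12(D+1)$ for $D\geqs 81$), which telescopes exactly to $\sum_{i=1}^n(2i-1)=n^2$, and you make the residual finite check explicit by listing the $23$ types of dimension at most $80$; your numerics ($(\sqrt{D}-9)(\sqrt{D}+1)\geqs 0$, and $d_n\geqs 2n-1$ for each $n\leqs 23$ on the list) all check out, as does the sorting step $\dim S_i\geqs d_i$. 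What your version buys is a fully self-contained and verifiable finite check and a sharper intermediate estimate; what the paper's buys is brevity. One shared caveat: both arguments implicitly read ``pairwise non-isomorphic'' as ``of pairwise distinct Dynkin types'' (otherwise distinct isogeny forms of a single type would defeat the rank count), which is consistent with the paper's stated convention that its results are independent of isogeny type.
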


\begin{proof}
Let $d_i = \dim S_i$, and assume $d_1\leqs d_2\leqs \cdots$. For any $r$, the number of distinct types of simple algebraic groups of rank at most $r$ is at most $4r$. Hence ${\rm rank}(S_i) \geqs \frac{i}{4}$, and so
\[
\sum_{i=1}^n d_i \geqs \frac{1}{16}\sum_{i=1}^n i^2 > \frac{1}{48}n^3.
\]
This is greater than $n^2$ provided $n>48$. For $n\leqs 48$, the conclusion can readily be checked by computation.
\end{proof}

We are now ready to prove Theorem \ref{t:cd}. Let $G$ be a connected algebraic group.
If $G$ is soluble then the conclusion holds trivially, so suppose $G$ is insoluble.
Let $R(G)$ be the radical of $G$ and write 
\[
\bar{G} = G/R(G) = \prod_{i=1}^{n} S_i^{k_i},
\] 
where the $S_i$ are pairwise non-isomorphic simple algebraic groups. By Corollary \ref{c:cd1}, ${\rm cd}(G) \geqs \sum_{i} {\rm cd}(S_i^{k_i})$, and hence Propositions \ref{p:simple} and \ref{p:ss} imply
\begin{equation}\label{cdineq}
{\rm cd}(G) \geqs \frac{1}{2}\sum_{i=1}^n \left(k_i\dim S_i - 40\right) = \frac{1}{2}\dim \bar{G} -20n.
\end{equation}
Now Lemma \ref{summ} gives ${\rm cd}(G) \geqs \frac{1}{2}(n^2-40n)$, and it follows that
\[
n \leqs 20 + \sqrt{400+2{\rm cd}(G)}.
\]
Therefore by \eqref{cdineq},
\[
\dim \bar{G} \leqs 2{\rm cd}(G)+40n \leqs 2{\rm cd}(G)+40\sqrt{400+2{\rm cd}(G)}+800.
\]
This completes the proof of Theorem \ref{t:cd}.

\begin{rem}\label{r:cd}
Let $G$ be an algebraic group in characteristic $p \geqs 0$ and set $\bar{G}=G/R(G)$. For a simple group $G$, it is easy to see that ${\rm cd}(G)=1$ if and only if $G = A_2$ and $p=2$. In the general case, by arguing as in the proof of Theorem \ref{cd0}, one can show that  
${\rm cd}(G)=1$ only if $\bar{G} = A_1$, or $p = 2$ and $\bar{G} = A_2$. For example, if $G = UA_1$, a semidirect product where $U$ is the natural module for $A_1$, then $l(G)=5$ and $\l(G)=4$.
\end{rem}

\section{Chain ratios}\label{chrat}

In this final section we consider the \emph{chain ratio} ${\rm cr}(G)$ of an algebraic group $G$, which is defined by
\[
{\rm cr}(G) = l(G)/\l(G).
\]
First we show that if $G$ is simple, then its dimension is bounded in terms of
its chain ratio.

\begin{prop}\label{p:crsimple}
Let $G$ be a simple algebraic group in characteristic $p \geqs 0$.
Then
\begin{itemize}\addtolength{\itemsep}{0.2\baselineskip}
\item[{\rm (i)}] $\dim G < 12\,{\rm cr}(G)$ if $p=0$.
\item[{\rm (ii)}] $\dim G < (1+o(1)) \cdot {\rm cr}(G) \cdot {(\log_2 {\rm cr}(G))}^2$ if $p > 0$,
where $o(1) = o_{{\rm cr}(G)}(1)$.
\end{itemize}
\end{prop}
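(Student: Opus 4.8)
The plan is to reduce everything to Corollary~\ref{length} together with the depth bounds of Theorems~\ref{alg0} and~\ref{algp}. Write $r$ for the rank of $G$ and $B$ for a Borel subgroup; then $\dim G = 2\dim B - r$, while $l(G) = \dim B + r$ by Corollary~\ref{length}. Eliminating $\dim B$ gives $\dim G = 2\,l(G) - 3r < 2\,l(G)$, that is,
\[
\dim G < 2\,{\rm cr}(G)\,\l(G).
\]
For part~(i), Theorem~\ref{alg0} gives $\l(G) \leqslant 6$, so this display immediately yields $\dim G < 12\,{\rm cr}(G)$.

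For part~(ii) I would first observe that we may assume $G$ is classical of large rank $r$: the exceptional groups, and the classical groups of any fixed bounded rank, account for only finitely many values of $\dim G$ and of ${\rm cr}(G)$, and ${\rm cr}(G)$ is bounded on this finite set, so they do not affect an estimate of asymptotic shape $(1+o(1))\,{\rm cr}(G)\,(\log_2{\rm cr}(G))^2$. For classical $G$ of rank $r$, Theorem~\ref{clup} gives $\l(G) \leqslant 2(\log_2 r)^2 + 12$, so the display above becomes
\[
\dim G < {\rm cr}(G)\big(4(\log_2 r)^2 + 24\big).
\]
It remains to convert $\log_2 r$ into $\log_2{\rm cr}(G)$. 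Among the classical groups of rank $r$ the smallest Borel subgroup occurs for type $A_r$, with $\dim B = \tfrac12 r(r+3) > \tfrac12 r^2$; hence $l(G) = \dim B + r > \tfrac12 r^2$, and therefore
\[
{\rm cr}(G) = \frac{l(G)}{\l(G)} > \frac{r^2/2}{2(\log_2 r)^2 + 12} > \frac{r^2}{6(\log_2 r)^2}
\]
for $r$ large. Taking logarithms, $\log_2{\rm cr}(G) \geqslant 2\log_2 r - 2\log_2\log_2 r - O(1) = (2 - o(1))\log_2 r$, so that $(\log_2 r)^2 \leqslant (\tfrac14 + o(1))(\log_2{\rm cr}(G))^2$; moreover ${\rm cr}(G) \to \infty$ with $r$, so the additive constant $24$ above is $o\big((\log_2{\rm cr}(G))^2\big)$. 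Combining the last two displays gives
\[
\dim G < {\rm cr}(G)\Big((1+o(1))(\log_2{\rm cr}(G))^2 + 24\Big) = (1+o(1))\,{\rm cr}(G)\,(\log_2{\rm cr}(G))^2.
\]
Since every $o(1)$ appearing here is a decreasing function of $r$, while ${\rm cr}(G)$ is forced to infinity by $r$, each may be rewritten as $o_{{\rm cr}(G)}(1)$, which is what part~(ii) asserts.

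The one genuinely quantitative ingredient is the quadratic lower bound $\dim B > \tfrac12 r^2$ for classical groups: this is exactly what forces $\log_2{\rm cr}(G) \geqslant (2-o(1))\log_2 r$, and hence collapses the coefficient $4$ of $(\log_2 r)^2$ down to the coefficient $1$ of $(\log_2{\rm cr}(G))^2$ required by the statement. I expect the main obstacle to be no more than this constant-tracking, together with the routine check that the finitely many small-rank and exceptional groups do not disturb the asymptotic bound.
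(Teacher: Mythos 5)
Your argument is correct and follows essentially the same route as the paper: the paper first notes $\dim G < 2\,l(G) = 2\,\l(G)\,{\rm cr}(G)$ (via Theorem \ref{t:len}(i), which is your identity $\dim G = 2\,l(G)-3r$ in disguise), then applies $\l(G)\leqs 6$ in characteristic zero and $\l(G)\leqs 2(\log_2 r)^2+12$ together with $\dim G>r^2$ in positive characteristic. The only difference is that you carry out the final conversion from $\log_2 r$ to $\log_2 {\rm cr}(G)$ explicitly (and dispose of the exceptional and bounded-rank cases), whereas the paper phrases the bound in terms of $\log_2 \dim G$ and leaves this last step as ``this easily implies the conclusion''.
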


\begin{proof}
Set $d = \dim G$ and note that $l(G) > d/2$ by Theorem \ref{t:len}. Therefore
\[
d < 2 l(G) = 2 \l(G)  {\rm cr}(G).
\]
Note that this holds for any algebraic group.

Assuming $G$ is simple and $p=0$, we have $\l(G) \leqs 6$ by Theorem \ref{alg0},
proving part (i).

Now suppose $p>0$ and let $r$ be the rank of $G$. Then $\l(G) \leqs 2(\log_2 r)^2 + 12$
by Theorem \ref{algp}. Since $d>r^2$ we obtain $\l(G) < \frac{1}{2}(\log_2d)^2 + 12$, so
\[
d <  ((\log_2 d)^2 + 24) \, {\rm cr}(G).
\]
This easily implies the conclusion of part (ii).
\end{proof}

In contrast to this result, we shall exhibit a sequence of algebraic groups $G$ for which $\dim G/R(G)$ is not bounded above in terms of the chain ratio ${\rm cr}(G)$. To show this we need the following result.

\begin{lem}\label{skdepth}
If $S$ is a simple algebraic group and $k \in \N$, then $\l(S^k) \geqs k+2$.
\end{lem}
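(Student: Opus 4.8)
The plan is to prove the lower bound $\l(S^k) \geqs k+2$ by induction on $k$, exploiting the product structure and the general fact (Lemma \ref{l:ss}) that adjoining a normal subgroup forces the depth up. The base case $k=1$ is just $\l(S) \geqs 3$, which holds for any simple algebraic group by Lemma \ref{depth23} (since $\dim S > 3$). For the inductive step, suppose $k \geqs 2$ and consider an unrefinable chain of minimal length
\[
S^k = G_0 > G_1 > \cdots > G_t = 1,
\]
so $t = \l(S^k)$. The key point is to understand the maximal connected subgroup $G_1$ of $S^k$. By a standard result on maximal subgroups of products of simple algebraic groups (Goursat-type analysis), $G_1$ is either of the form $S^{k-1}$ embedded as a ``diagonal'' factor (more precisely, a product of one full diagonal $S$ across two coordinates with the remaining $k-2$ factors intact), or it contains a normal subgroup $S^{k-1}$ with quotient a proper subgroup of $S$, or it is of the form $M \times S^{k-1}$ with $M$ maximal in one factor $S$.

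In each case I would extract a lower bound. If $G_1 \cong S^{k-1}$ (the diagonal case), then by induction $\l(G_1) \geqs (k-1)+2 = k+1$, hence $\l(S^k) \geqs 1 + \l(G_1) \geqs k+2$. If $G_1 = M \times S^{k-1}$ with $M < S$ proper and connected, then $\l(G_1) \geqs \l(S^{k-1}) \geqs k+1$ by Lemma \ref{add}(iii) and induction, giving the same bound. The remaining case is where $G_1$ has a connected normal subgroup isomorphic to $S^{k-1}$ (coming from all-but-one coordinate) with $G_1/S^{k-1}$ a proper connected subgroup $L$ of $S$; here $G_1 = S^{k-1} \cdot L'$ for a suitable lift, and since $S^{k-1}$ is nontrivial normal and the complementary piece is nontrivial connected, Lemma \ref{l:ss} yields $\l(G_1) \geqs 1 + \l(S^{k-1})$... but that only gives $k+2$ after adding the initial step, i.e.\ $\l(S^k) \geqs 1 + (1 + (k-1)) = k+1$, which is off by one. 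To fix this I would instead use Lemma \ref{add}(iii): $\l(G_1) \geqs \l(S^{k-1})\geqs k+1$ directly, so again $\l(S^k) \geqs k+2$.

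The main obstacle, as the discussion above reveals, is the precise classification of maximal connected subgroups of $S^k$ and showing that in every case the relevant subgroup either \emph{is} (a group containing a normal, or quotient) copy of $S^{k-1}$ of depth $\geqs k+1$, or splits off enough diagonal structure. I expect this to be routine but requires care: the cleanest formulation is that any maximal connected $M < S^k$ has a connected normal subgroup $N$ with $M/N$ simple or $N$ simple, and the ``other part'' accounts for a copy of $S^{k-1}$ up to isogeny, whose depth is at least $k+1$ by the induction hypothesis combined with Lemma \ref{add}(iii). Since depth is insensitive to isogeny (as stated in the introduction), this suffices. Assembling these cases gives $\l(S^k) \geqs 1 + (k+1) = k+2$, completing the induction.
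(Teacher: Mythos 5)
Your proposal is correct and follows essentially the same route as the paper: induction on $k$, with the maximal connected subgroup at the top of a minimal chain classified via its projections (Goursat-type analysis) as either $M\times S^{k-1}$ with $M<S$ or a diagonal ${\rm diag}(S^2)\times S^{k-2}\cong S^{k-1}$, and in every case the induction hypothesis applied to a copy or quotient $S^{k-1}$ of depth at least $k+1$. Your third case is redundant (a subgroup of the direct product $S^k$ containing the normal factor $S^{k-1}$ is automatically of the form $L\times S^{k-1}$, so it is subsumed in the second case), but you resolve it correctly via Lemma \ref{add}(iii), and the minor wobble over Lemma \ref{l:ss} does not affect the argument.
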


\begin{proof}
The proof goes by induction on $k$, the case $k=1$ being clear.

Suppose $k>1$,  write $G=S^k$ and let $\pi_i:G\to S$ be the projection to the $i$-th factor. Let $M$ be a maximal connected subgroup of $G$ with $\l(M)=\l(G)-1$. If $\pi_i(M)=M_i<S$ for some $i$, then $M = M_i \times S^{k-1}$, and so $\l(M) \geqs \l(S^{k-1}) \geqs k+1$ by induction, giving the conclusion.

Now assume $\pi_i(M)=S$ for all $i$. Then $M$ is a product of diagonal subgroups of various subsets of the simple factors of $S^k$, and maximality forces $M = {\rm diag}(S^2)\times S^{k-2}$, where ${\rm diag}(S^2)$ denotes a diagonal subgroup of $S^2$. Hence $M \cong S^{k-1}$ and the conclusion again follows by induction.
\end{proof}

Now, fix a simple algebraic group $S$ and let
$G = S^k$ for $k \geqs 1$. Since $l(G) = k\cdot l(S)$ and $\l(G) \geqs k+2$ by Lemma \ref{skdepth}, it follows that
\[
{\rm cr}(G) = l(G)/\l(G) < k\cdot l(S)/k = l(S).
\]
Letting $k$ tend to infinity, we see that ${\rm cr}(G)$ is bounded, while
$\dim G/R(G) = \dim G$ tends to infinity.

\end{document}